\title[Asymptotic freeness for invariant states]{Asymptotic freeness of unitary matrices in tensor product spaces for invariant states}
\author {Beno\^\i{}t Collins}
\address{Department of Mathematics, Kyoto University} \email{collins@math.kyoto-u.ac.jp}
\author {Pierre Yves Gaudreau Lamarre}
\address{ORFE Department, Princeton University} \email{plamarre@princeton.edu}
\author {Camille Male}
\address{Institut de Mathematiques de Bordeaux, Universit\'e de Bordeaux $\&$ CNRS} \email{camille.male@math.u-bordeaux.fr}
\theoremstyle{plain}
\newtheorem{lemma}{Lemma}[section]
\newtheorem{theorem}[lemma]{Theorem}
\newtheorem{proposition}[lemma]{Proposition}
\theoremstyle{definition}
\newtheorem{definition}{Definition}
\theoremstyle{remark}
\newtheorem{remark}{Remark}
\newtheorem{example}{Example}
\DeclareMathOperator{\tr}{tr} \DeclareMathOperator{\Tr}{Tr}
 \DeclareMathOperator{\B}{B}
\newcommand{\E}{{\mathbb{E}}}
\newcommand{\A}{{\mathcal{A}}}
\newcommand{\C}{{\mathbb{C}}}
\newcommand{\Z}{{\mathbb{Z}}}
\newcommand\eps{\epsilon}
\newcommand\etc{, \dots , }
\def\esp{\mathbb E}
\def\one{\mathbbm{1}}
\def\toN{^{(N)}}
\def\mbf{\mathbf}
\def\mcal{\mathcal}
\def\mbb{\mathbb}
\def\mrm{\mathrm}
\def\eq{\begin{eqnarray*}}
\def\qe{\end{eqnarray*}}
\def\eqa{\begin{eqnarray}}
\def\qea{\end{eqnarray}}
\def\enum{\begin{enumerate}}
\def\emun{\end{enumerate}}
\begin{document}

\begin{abstract}
In this paper, we pursue our study of asymptotic properties of families of random matrices that have a tensor structure.
In \cite{cgl}, the first- and second-named authors provided conditions under which tensor products of unitary random
matrices are asymptotically free with respect to the normalized trace. Here, we extend this result by proving that
asymptotic freeness of tensor products of Haar unitary matrices holds with respect to a significantly larger class of states.
Our result relies on invariance under the symmetric group, and therefore on traffic probability.

As a byproduct, we explore two additional generalizations:
(i) we state results of freeness in a context of general sequences of representations of the unitary group -- the fundamental representation being a particular case that corresponds to the classical asymptotic freeness result for Haar unitary matrices,
and (ii) we consider actions of the symmetric group and the free group simultaneously and obtain a result of asymptotic freeness in this context as well. 
\end{abstract}

\maketitle

\section{Introduction}

\subsection{Absorption Properties in Tensor Products}

In this paper, our main aim is to study some of the mechanisms that give
rise to asymptotic absorption properties of unitary random matrices.
Roughly speaking, absorption phenomena
refers to the observation that several interesting properties of free unitary
operators remain unaffected by taking tensor products with other unitary operators.

A prototypical example of an absorption phenomenon is {\it Fell's absorption principle},
which states that the left regular representation of a discrete group absorbs
any unitary representation through tensor products (see, for instance,
\cite[Proposition 8.1]{pisier} for a precise statement). Combined with a
classical computation due to Akemann and Ostrand \cite{ao}, Fell's absorption
principle implies the following result, which has interesting applications in operator
algebras (e.g., \cite{pisierForms}).

\begin{proposition}[Norm Absorption]\label{prop:Fell}
Let $(u_1,\ldots,u_L), L\ge 2$ be a Haar unitary system, i.e., free Haar unitary operators
(see Definition \ref{def: Haar system}). For every unitary operators $v_1,\ldots,v_L$, one has
$$\left\|\sum_{\ell=1}^Lu_\ell\otimes v_\ell\right\|
=\left\|\sum_{\ell=1}^L u_\ell\right\|=2\sqrt{L-1}.$$
\end{proposition}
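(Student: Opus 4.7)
The plan is to split the claim into the two equalities and treat each via the classical result indicated in the surrounding remarks: Fell's absorption principle for the first, and the Akemann--Ostrand norm computation for the second.

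For the first equality, I would interpret both sides as the images of the generators of the free group $F_L$ under suitable unitary representations. Since $F_L$ has no relations, the assignment $g_\ell \mapsto v_\ell$ extends canonically to a unitary representation $\pi$ of $F_L$ on the Hilbert space $K$ on which the $v_\ell$ act (taking any faithful representation of the ambient C*-algebra if needed). On the other hand, a free Haar system $(u_1,\dots,u_L)$ is, by definition of the $*$-distribution, unitarily equivalent to the image of the generators under the left regular representation $\lambda$ of $F_L$ on $\ell^2(F_L)$. Hence $u_\ell \otimes v_\ell = (\lambda \otimes \pi)(g_\ell)$. Fell's absorption principle asserts that $\lambda \otimes \pi$ is unitarily equivalent to $\lambda \otimes \Id_K$, and a unitary equivalence preserves operator norms of arbitrary polynomials in the images of the generators. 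Consequently,
$$\left\|\sum_{\ell=1}^L u_\ell \otimes v_\ell\right\| = \left\|\sum_{\ell=1}^L u_\ell \otimes \Id_K\right\| = \left\|\sum_{\ell=1}^L u_\ell\right\|,$$
which is exactly the first equality.

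For the second equality, I would simply invoke the Akemann--Ostrand theorem \cite{ao}, which states that the norm in $C^*_r(F_L)$ of the sum of the canonical generators equals $2\sqrt{L-1}$. Since the free Haar unitary system generates (a copy of) $C^*_r(F_L)$ with $u_\ell$ corresponding to $\lambda(g_\ell)$, this immediately gives $\|\sum_\ell u_\ell\| = 2\sqrt{L-1}$.

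The proposition is thus assembled from two known ingredients, and no serious obstacle is anticipated. The only small point requiring care is the observation that an arbitrary finite tuple of unitaries automatically extends to a representation of the free group, so that Fell's principle genuinely applies; this is a consequence of the freeness (in the group-theoretic sense) of the generators of $F_L$ and is independent of any algebraic relations satisfied by the $v_\ell$ themselves.
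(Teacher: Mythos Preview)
Your proposal is correct and matches the paper's own treatment: the paper does not actually prove this proposition but presents it as a known consequence of Fell's absorption principle combined with the Akemann--Ostrand norm computation, which is precisely the two-step argument you outline. Your added remark that any tuple of unitaries extends to a representation of $F_L$ is the only detail the paper leaves implicit, and it is correct.
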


In recent years, the authors of the present paper have studied several problems
in free probability in which asymptotic absorption phenomena arise at the level of
random unitary matrices. For example, Collins and Male proved the following
finite-dimensional version of Proposition \ref{prop:Fell}:

\begin{proposition}[{\cite[Section 2.2.4]{cm}}]\label{prop:finite dimensional Fell}
For all $N\in\mbb N$, let $U_1^{(N)} \etc U_L^{(N)},$ $L\ge 2$ be independent $N\times N$
Haar unitary random matrices, and let $V_1,\ldots,V_L$ be unitary matrices
of fixed dimension $M\in\mbb N$.
Almost surely, it holds that
$$\lim_{N\to\infty}\left\|\sum_{\ell=1}^LU^{(N)}_\ell\otimes V_\ell\right\|
=\lim_{N\to\infty}\left\|\sum_{\ell=1}^L U^{(N)}_\ell\right\|=2\sqrt{L-1}.$$
\end{proposition}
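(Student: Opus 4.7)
The plan is to reduce the random-matrix norm to a free operator-algebra norm via strong asymptotic freeness, and then invoke Fell's absorption (Proposition \ref{prop:Fell}) to evaluate the limit. The second equality is essentially classical: by the Haagerup--Thorbj\o rnsen theorem (as extended to Haar unitaries by Collins and collaborators), almost surely the family $(U_1^{(N)}, \ldots, U_L^{(N)})$ converges in $*$-distribution and in operator norm to a free Haar unitary system $(u_1, \ldots, u_L)$, and $\|\sum_\ell u_\ell\| = 2\sqrt{L-1}$ by the Akemann--Ostrand calculation recalled in Proposition \ref{prop:Fell}.

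For the first equality, the target is the almost sure convergence
$$\Bigl\|\sum_{\ell=1}^L U_\ell^{(N)} \otimes V_\ell\Bigr\| \longrightarrow \Bigl\|\sum_{\ell=1}^L u_\ell \otimes V_\ell\Bigr\|,$$
after which Proposition \ref{prop:Fell} identifies the right-hand side with $2\sqrt{L-1}$. The key technical input is an \emph{amplified} strong convergence theorem: for every noncommutative polynomial $P$ in $L$ unitary indeterminates with coefficients in the fixed algebra $\M{M}$,
$$\bigl\|P(U_1^{(N)} \otimes I_M, \ldots, U_L^{(N)} \otimes I_M)\bigr\| \longrightarrow \bigl\|P(u_1 \otimes 1, \ldots, u_L \otimes 1)\bigr\|$$
almost surely, where the right-hand side is evaluated in $C^*(u_1,\ldots,u_L) \otimes \M{M}$. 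The specific polynomial $P = \sum_\ell X_\ell \otimes V_\ell$ then gives exactly what is required.

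The principal obstacle is this amplified convergence: norm convergence of scalar noncommutative polynomials in the $U_\ell^{(N)}$ does not, a priori, imply norm convergence once $\M{M}$-valued coefficients are introduced. I would establish it by adapting the Haagerup--Thorbj\o rnsen linearization/resolvent technique to polynomials with coefficients in the fixed algebra $\M{M}$, exploiting the fact that $M$ stays bounded while $N \to \infty$ so that the matrix-valued resolvent estimates remain uniform in $N$. An alternative is the traffic-probability route developed in \cite{cm}, which computes all mixed moments of the amplified family and then uses concentration of measure for $\U(N)$ to upgrade almost-sure moment convergence to almost-sure norm convergence. Either way, once the amplified convergence is in hand, Fell's absorption principle collapses the free-limit norm to $2\sqrt{L-1}$, completing the proof.
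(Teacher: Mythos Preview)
Your proposal is correct and matches the paper's own justification. The paper does not give a detailed argument; it simply notes that Proposition~\ref{prop:finite dimensional Fell} follows from the strong asymptotic freeness of independent Haar unitary matrices with respect to polynomials with matrix-valued coefficients, which is the central result of \cite{cm}---precisely the ``amplified strong convergence'' you isolate as the key input, after which Fell's absorption (Proposition~\ref{prop:Fell}) identifies the limit.
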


Proposition \ref{prop:finite dimensional Fell} follows from the strong asymptotic
freeness of independent Haar unitary matrices with respect to polynomials with scalar or matrix-valued
coefficients, which is the central result in \cite{cm}.

In a slightly different direction, Collins and Gaudreau Lamarre \cite{cgl} proved a
general result which has the following proposition as a simple special case:

\begin{proposition}\label{prop:cgl absorption principle}
For every $N\in\mbb N$, let $U_1^{(N)} \etc U_L^{(N)}$ be independent $N\times N$ Haar unitary random matrices, and
let $V_1^{(M)} \etc V_L^{(M)}$ be unitary matrices of arbitrary dimension $M=M(N)$,
which may or may not depend on $N$.
In the space $(\mathbb M_N(\mathbb C) \otimes \mathbb M_M(\mathbb C),\tr_N \otimes \tr_M)$
(where $\tr_N=N^{-1}\Tr$ denotes the normalized trace), the family
\begin{align}\label{cgl absorption principle families}
(U_1^{(N)} \otimes V_1^{(M)} ,\ldots, U_L^{(N)} \otimes V_L^{(M)} )
\end{align}
converges almost surely and in expectation (Definition \ref{def: convergences}) as $N\to\infty$ to a Haar unitary system.
\end{proposition}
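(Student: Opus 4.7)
The plan is to verify that the family $\{W_\ell^{(N,M)}\}_{\ell=1}^L$, where $W_\ell^{(N,M)}=U_\ell^{(N)}\otimes V_\ell^{(M)}$, converges in $*$-distribution to a free Haar unitary system. By definition, this amounts to showing that for every word $w$ in $2L$ non-commutative indeterminates $X_1,X_1^*,\ldots,X_L,X_L^*$,
$$
\tr_N\otimes\tr_M\bigl(w(W_1^{(N,M)},\ldots,W_L^{(N,M)})\bigr)
\underset{N\to\infty}{\longrightarrow}
\begin{cases}1 & \text{if $w$ reduces to the identity in the free group } F_L,\\ 0 & \text{otherwise},\end{cases}
$$
both in expectation and almost surely.

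The decisive observation is the multiplicativity of the tensor product: since $(A_1\otimes B_1)(A_2\otimes B_2)=A_1A_2\otimes B_1B_2$ and $(A\otimes B)^*=A^*\otimes B^*$, every word in the $W_\ell$ factorises,
$$
w(W_1^{(N,M)},\ldots,W_L^{(N,M)}) = w(U_1^{(N)},\ldots,U_L^{(N)}) \otimes w(V_1^{(M)},\ldots,V_L^{(M)}),
$$
so the joint $*$-moment splits as a product
$$
\tr_N\otimes\tr_M\bigl(w(W)\bigr) = \tr_N\bigl(w(U)\bigr)\cdot\tr_M\bigl(w(V)\bigr).
$$
The $V$-factor has modulus at most $1$ by unitarity, uniformly in $M$; this is the mechanism that neutralises the arbitrary (possibly $N$-dependent) dimension $M$ and reduces the problem entirely to the $U$-factor.

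When $w$ reduces to the identity in $F_L$, both $w(U^{(N)})$ and $w(V^{(M)})$ are the corresponding identity matrices exactly, so the product of traces equals $1$ deterministically. When $w$ is non-trivial, the classical asymptotic freeness of independent Haar unitary matrices (equivalently, a direct Weingarten calculation) yields
$$
\E\bigl[\tr_N(w(U^{(N)}))\bigr]=O(N^{-2}),\qquad \mathrm{Var}\bigl(\tr_N(w(U^{(N)}))\bigr)=O(N^{-2}).
$$
Multiplying by the bounded deterministic factor $\tr_M(w(V^{(M)}))$ preserves these estimates, giving the convergence in expectation immediately, and, via Borel--Cantelli applied to each fixed word followed by a countable union over all words, the almost sure convergence as well.

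The only step requiring mild care is the $O(N^{-2})$ variance bound for the $U$-trace in the non-trivial case, but this is a standard output of Weingarten calculus and is not, in my view, the real conceptual obstacle. The main point of the argument is rather the recognition that tensoring with arbitrary unitaries cannot destroy asymptotic freeness when one measures moments against the normalised tensor-product trace, precisely because the $V$-contribution enters only as a uniformly bounded scalar multiplier, regardless of how $M$ depends on $N$.
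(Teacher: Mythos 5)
Your argument is correct, and it reaches the conclusion by a more direct, quantitative route than the paper. The shared core is the factorization $\tr_N\otimes\tr_M\bigl(w(\mbf W)\bigr)=\tr_N\bigl(w(\mbf U)\bigr)\,\tr_M\bigl(w(\mbf V)\bigr)$ together with the observation that the $V$-factor is a scalar of modulus at most $1$ (this is exactly the mechanism behind the paper's Lemma \ref{lem: Haar TFC}); but where you apply it at finite $N$ and then control the $U$-factor by Weingarten-type estimates ($\E[\tr_N(w(\mbf U_N))]\to 0$, $\mathrm{Var}=O(N^{-2})$, Chebyshev plus Borel--Cantelli, countable union over words), the paper argues softly: it cites the classical almost sure asymptotic freeness of $\mbf U_N$ \cite{hp,Voic91}, uses norm-boundedness to extract subsequences along which $\mbf V_M$ and $\mbf W_N$ converge, identifies every subsequential limit as a tensor product of a Haar unitary system with unitaries (hence a Haar unitary system by Lemma \ref{lem: Haar TFC}), and concludes by uniqueness of the limit, with convergence in expectation following from boundedness. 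Your version buys explicitness and avoids any compactness step, at the cost of invoking rate statements you do not prove: the variance bound $O(N^{-2})$ is standard (second-order freeness or concentration on $\mcal U_N$) but is precisely the substance of the almost sure convergence the paper simply cites, so you could shorten your argument by citing \cite{hp,Voic91} for $\tr_N\bigl(w(\mbf U_N)\bigr)\to 0$ almost surely and dropping the rates altogether -- note also that for the expectation claim any $o(1)$ suffices, so the $O(N^{-2})$ there is stronger than needed. The paper's subsequence formulation has the further advantage of matching the general tensor freeness framework of \cite{cgl}, but for this particular proposition both proofs are complete and of comparable length.
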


\begin{remark}
Clearly, the matrices $U_\ell^{(N)}\otimes 1$ have the same {\it distribution}
(Definition \ref{def: distribution}) in the space $(\mathbb M_N(\mathbb C) \otimes \mathbb M_M(\mathbb C),\tr_N \otimes \tr_M)$
as the matrices $U_\ell^{(N)}$ in the space $(\mathbb M_N(\mathbb C),\tr_N)$.
\end{remark}

The almost sure convergence of $(U_1^{(N)},\ldots, U_L^{(N)})$ with respect to $\tr_N$ to a Haar unitary
system is a classical result in free probability \cite{hp,Voic91}. The fact that this
is preserved after taking tensor products with arbitrary unitary matrices is
a special case of the {\it tensor freeness conditions} introduced in
\cite[Definition 1.4]{cgl}.
We refer to Section \ref{section: tfc} for more details, including an elementary proof
of Proposition \ref{prop:cgl absorption principle}.

Our main purpose in this paper is to study a generalization of the absorption property stated in
Proposition \ref{prop:cgl absorption principle} (see {\bf Theorem \ref{Th:Main}} below for a statement of our main result).
The main departure of the present paper from
Proposition \ref{prop:cgl absorption principle} is that we consider asymptotic freeness of families of the form
\eqref{cgl absorption principle families} with respect to states on
$\mathbb M_N(\mathbb C) \otimes \mathbb M_M(\mathbb C)$ other than the tensor
product of traces $\tr_N \otimes \tr_M$. Although this greater generality comes at a cost
of making stricter assumptions on the matrices $V^{(M)}_\ell$ that the $U^{(N)}_\ell$
can absorb and replacing almost sure convergence with convergence in expectation,
we show that an absorption property holds for a class of problems that go well beyond
what can be explained by such simple criteria as the tensor freeness conditions of
\cite{cgl}.

\subsection{Representation Theory}
Representation theory has also played an important role in the study of asymptotic freeness for random matrices; see for example 
\cite{MR1644993,collins-imrn}.
The choice of $V^{(M)}_\ell=U^{(N)\otimes K_1}_\ell\otimes \overline{U^{(N)\otimes K_2}_\ell}$ in Equation \eqref{cgl absorption principle families}
above
(where $\overline{\cdot}$ denotes the entrywise complex conjugate)
is a special case of the results that we treat, but it
is of particular interest because it introduces additional 
symmetries arising from permutations of legs, and $U^{(N)}_\ell\mapsto U^{(N)\otimes K_1}_\ell\otimes \overline{U^{(N)\otimes K_2}_\ell}$ is a group morphism.
That is, we are working with the representation theory of the unitary group -- irreducible representations can
all be obtained by taking corners of the above, that can themselves be constructed with permutations (or, more generally,
elements of the commutant for the action of the group). 
In turn, it becomes interesting and natural to study the asymptotic properties of random unitaries that arise from representation
theory, as well as families combining such unitary and permutation operators. 
We are able to obtain asymptotic freeness in the first case, and asymptotic freeness with amalgamation in the latter case
(see {\bf Theorem \ref{Th:Applications}} below). We note that such questions
are natural from the point of view of harmonic analysis over the free group;
we refer to Section \ref{sec application intro} for more details.

\subsection{Main Result and Corollaries}

In what follows, for every $N\in\mbb N$, we let $\mcal U_N$ denote
the unitary group of dimension $N$. We use $\mcal X_N$ to denote
a subgroup of $\mcal U_N$, and we distinguish $\mcal X_N=\mcal O_N$
and $\mcal X_N=\mcal S_N$ in the cases of the orthogonal and permutation
groups, respectively.
 
\begin{definition}\label{Def: Invariance}
Let $K\geq 1$ be an integer. 
\begin{itemize}
	\item A family $\mbf A_N=(A_1\toN\etc A_L\toN)$ of random matrices in $\mrm{M}_N(\mbb C)^{\otimes K}$ is said to be $\mcal X_N$-invariant if
	$$\mbf A_N \overset{\mcal Law} = \Big( (U\otimes \dots \otimes U) A_\ell\toN (U^*\otimes \dots \otimes U^*) \Big)_{\ell=1\etc L}$$
for every $U\in\mcal X_N$. 
	\item A linear form $\phi_N: \mrm{M}_N(\mbb C)^{\otimes K} \to \mbb C$ is said to be $\mcal X_N$-invariant if
	$$\phi_N( A_1 \otimes \dots \otimes A_K) = \phi_N( UA_1U^* \otimes \dots \otimes UA_KU^*)$$
for every $A_1\etc A_K\in\mrm{M}_N(\mbb C)$ and $U\in\mcal X_N$.
\end{itemize}
\end{definition}

Our main result regarding absorption in tensor products is the following.

\begin{theorem}\label{Th:Main}
Let $K\geq 1$ be an integer. For every $N\in\mbb N$, consider a family of unitary random matrices $\mbf W_N = (W_1 \etc W_L)$ in $ \mrm{M}_N(\mbb C)^{\otimes K}$ of the form
	\begin{align}\label{Eq:Matrices W}
	W_\ell={U_\ell^{(N)}}^{\otimes K_1}\otimes  {U_\ell^{(N)t}}^{\otimes K_2} \otimes V_\ell\toN, \qquad \ell=1\etc L,
	\end{align}
where
	\begin{itemize}
		\item $K=K_1+K_2+K_3$, with $K_1\geq 1$, $K_2,K_3\geq 0$ integers.
		\item $\mbf U_N=(U_1^{(N)}\etc U_L^{(N)})$ is a family of $N\times N$ independent Haar unitary matrices ($U_\ell^{(N)t}$ denotes the transpose of $U_\ell^{(N)}$).
		\item $\mbf V_N=(V_1\toN \etc V_L\toN)$ is a family of unitary random matrices in $\mrm M_N(\mbb C)^{\otimes K_3}$, independent of $\mbf U_N$.
	\end{itemize}

Let $\psi_N: \mrm{M}_N^{\otimes K}(\mbb C) \to \mbb C$ be a state (see Definition \ref{def: state}).
Assume that $\psi_N$ or $\mbf V_N$ is $\mcal S_N$-invariant. If $\mbf V_N$ satisfies the Mingo-Speicher bound (see Definition \ref{Def:MSbound}), then,
in the space $(\mathbb M_N(\mathbb C)^{\otimes K} ,\psi_N)$, the family 
$\mathbf W_N$ converges in expectation as $N\to\infty$ to a Haar unitary system.
\end{theorem}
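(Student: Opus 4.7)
The goal is to compute, for each reduced word $w$ in the free-group generators $x_1,\ldots,x_L$, the limit of $\mathbb E[\psi_N(w(\mathbf W_N))]$ and to show that the variance of $\psi_N(w(\mathbf W_N))$ vanishes; the limit should be $1$ if $w$ is the empty word and $0$ otherwise. Since $\mathbf U_N$ and $\mathbf V_N$ are independent and $\psi_N$ is a state on a tensor product, the plan is to isolate the three slots of $W_\ell = U_\ell^{(N)\otimes K_1}\otimes (U_\ell^{(N)t})^{\otimes K_2}\otimes V_\ell\toN$, integrate out the Haar-distributed $U_\ell$ factors via the unitary Weingarten formula, then exploit the $\mcal S_N$-invariance hypothesis to massage the residual $V$-dependent quantity into a form where the Mingo-Speicher bound applies.

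\textbf{The Weingarten expansion.} Expanding $\psi_N(w(\mathbf W_N))$ entry by entry, the independence of the $U_\ell$'s across $\ell$ factors the Haar expectation into a product of single-letter Weingarten integrals. For each $\ell$ this produces a weighted sum over pairs $(\sigma_\ell,\tau_\ell)\in\Sy{n_\ell}\times\Sy{n_\ell}$, where $n_\ell$ counts the appearances of $U_\ell^{\pm 1}$ in $w$, weighted by $\Wg(\sigma_\ell\tau_\ell^{-1},N)$ of order $N^{-n_\ell-|\sigma_\ell\tau_\ell^{-1}|}$, times products of Kronecker deltas pairing row and column indices of the direct legs (with the roles swapped on the $K_2$ transposed legs). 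After contracting, what remains is a sum over $\{1,\ldots,N\}$-indices, subject to constraints dictated by the $(\sigma_\ell,\tau_\ell)$, of an evaluation of $\psi_N$ on a tensor network built from the $V_\ell\toN$.

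\textbf{Invariance and Mingo-Speicher.} At this stage the $\mcal S_N$-invariance hypothesis enters decisively: either $\psi_N$ or $\mathbf V_N$ is invariant under the simultaneous conjugation by the diagonal action of permutation matrices, so the unrestricted index sum that Weingarten produces collapses into a finite sum of permutation-type contractions between copies of the $V_\ell\toN$'s and $\psi_N$. This is precisely the traffic-probabilistic reformulation alluded to in the abstract. The Mingo-Speicher bound then provides a uniform $O(1)$ estimate for each such traffic-type evaluation. Combined with the Weingarten asymptotics, only pairs $(\sigma_\ell,\tau_\ell)$ with $\sigma_\ell=\tau_\ell$ survive at order $N^0$, and among those only the ones encoding the free-product cancellations of $w$ contribute; all others come with a strictly negative power of $N$ and vanish in the limit. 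In particular, the surviving sum must be computed and shown to equal the Haar unitary moment $\delta_{w\equiv e}$ independently of the specific choice of $V_\ell$.

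\textbf{Main obstacle and variance.} The combinatorial identification of the dominant terms is where I expect most of the work to concentrate: one has to track the interaction between the direct legs, the transposed legs (which invert part of the permutation structure), and the permutation-type contractions on the $V$-slot, then verify that the residual sum matches the free Haar unitary moment without spurious contributions from the $K_2$-transposed legs or from higher-genus diagrams that the Mingo-Speicher bound might fail to kill a priori. Once the expectation is controlled, the same scheme applied to the word $w\otimes\bar w$ in $\mrm{M}_N(\mbb C)^{\otimes 2K}$ bounds $\mathbb E[|\psi_N(w(\mathbf W_N))|^2]$; the Weingarten and Mingo-Speicher estimates force the two replicas to decouple asymptotically, yielding vanishing variance and hence convergence in probability.
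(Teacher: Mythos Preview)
Your high-level plan matches the paper's in spirit, but there is a substantive gap at the step where you invoke the Mingo--Speicher bound. That bound (Definition~\ref{Def:MSbound}) is a statement about the elementary forms $\Tr_{N,T}$ applied to tensor products of the $V_\ell^{(N)}$; it says nothing directly about the state $\psi_N$. To make your argument work you must first decompose $\psi_N = \sum_{\pi\in\mcal P(2K)} a_{N,\pi}\,\Tr_{N,T_0^\pi}$ (this is where $\mcal S_N$-invariance really enters) and, crucially, prove $a_{N,\pi} = O(N^{-\mathfrak L(T_0^\pi)/2})$. Without this coefficient bound the Mingo--Speicher estimate on the $V$-side does not balance against the state, and you cannot conclude boundedness, let alone $o(1)$. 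The paper devotes all of Section~\ref{Sec:InvState} to this (Propositions~\ref{Prop:SnInv 1} and~\ref{Prop:SnInv 2}), and the coefficient bound is not automatic: it requires an injective-trace version of the Mingo--Speicher inequality (Lemma~\ref{lem:InjMS}) combined with a M\"obius argument and a construction of auxiliary diagonal random matrices (Lemma~\ref{lem:psi^pi}).

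Your assertion that ``only pairs $(\sigma_\ell,\tau_\ell)$ with $\sigma_\ell=\tau_\ell$ survive at order $N^0$'' is also too optimistic. That is the standard leading Weingarten term for the normalized trace, but once $\psi_N$ is replaced by a sum over elementary forms $\Tr_{N,T_0^\pi}$, the power-counting depends on the graph $T_0^\pi$ and is no longer governed by cycle types of single permutations. The paper does not do direct Weingarten bookkeeping at all; instead, for each fixed $T$ it linearizes $\Tr_{N,T}(M(\mbf W_N))$ into a larger graph $T_M$, expands in injective traces over quotients $T'\geq T_M$, separates the $\mbf U_N$- and $\mbf V_N$-contributions via a traffic splitting lemma (Lemma~\ref{lem:splitting}), bounds a cross-term exponent $\eta(T')\leq 0$ through an analysis of a ``graph of colored components'' and its pruning (Section~\ref{Sec:Eta}), and finally shows the $\mbf U_N$-contribution vanishes via a cactus/validity criterion (Proposition~\ref{Prop:LimHaar}, which packages the Weingarten asymptotics in traffic language). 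The vanishing for nontrivial $M$ hinges on a free-group argument showing that $(T_1,\delta,\eps)$ cannot be valid. Your proposal does not supply any of this structure; the acknowledgment that the dominant-term identification is ``where most of the work concentrates'' is correct, but it is not yet a plan.
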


\begin{remark}\label{Rmk: Invariance Duality}
In Theorem \ref{Th:Main}, there is no loss of generality in assuming that
$\psi_N$ and $\mbf V_N$ are both $\mcal S_N$-invariant. We refer to
Section \ref{Sec: Duality of Invariance} for more details.
\end{remark}

\begin{remark}
The Mingo-Speicher bound is a very powerful and fine property when analyzing the asymptotics of large random matrices by the method of moments. Its exact formulation is quite technical and requires several combinatorial definitions, which is why it is postponed until later in this article.
Let us note however that the Mingo-Speicher bound holds when $V^{(N)}_\ell$ is a tensor product $V_{\ell,1}^{(N)}\otimes \cdots \otimes V^{(N)}_{\ell,K_3}$ of unitary matrices of dimension $N$;
see Remark \ref{rmk:Mingo-Speicher on Tensor Products}.
\end{remark}

\begin{remark}
For $K_1=1$ and $K_2=K_3=0$, Theorem \ref{Th:Main} simply states that
independent Haar unitary matrices are asymptotically $^*$-free with respect to any state,
which has been proved for a large class of unitary invariant matrices in \cite{cdm}.
\end{remark}

Next, we state our results concerning representation theory.

\begin{theorem}\label{Th:Applications}
Let $(\lambda,\mu)$ be a signature, and let $\chi_{\lambda,\mu}$
be the character of the associated rational irreducible representation $(\rho_{\lambda,\mu},V_{\lambda,\mu})$
of $\mcal U_N$, provided $N$ is large enough
(see Section \ref{sec application RT} for more details on this notation).

Let $K\in\mbb N$ and let
$\mbf U_N:=(U_1^{(N)},\ldots, U_K^{(N)})$ be a family of i.i.d. $N\times N$ Haar unitary random matrices. We denote 
\[(\mbf U_N,\overline{\mbf U_N}):=\big(U_1^{(N)},\ldots, U_K^{(N)},\overline{U_1^{(N)}},\ldots,\overline{U_K^{(N)}}\big),\]
where we recall that $\overline{\cdot}$ denotes the entrywise complex conjugate.
In the space $End(V_{\lambda,\mu})$, the family $(\rho_{\lambda,\mu}\mbf U_N,\rho_{\lambda,\mu}\overline{\mbf U_N})$
converges in expectation as $N\to\infty$ to a Haar unitary system.

Let $\mbf U_N$ be as above and $d\in\mbb N$ be an integer. The family
\[\mbf U_N^{\otimes d}:=\big(U_1^{(N)\otimes d},\ldots, U_K^{(N) \otimes d}\big)\]
is asymptotically free with
 amalgamation over $\mcal S_d$ in the tensor product representation
 $M_N(\mathbb{C})^{\otimes d }$, as $N\to\infty$.
\end{theorem}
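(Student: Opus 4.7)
\emph{Part 1 (rational irreps).} My plan is to realize the character $\chi_{\lambda,\mu}$ as a normalized trace against a commutant idempotent, so that Theorem \ref{Th:Main} applies directly. By mixed Schur--Weyl duality (walled Brauer algebra), for $N$ large enough the rational irrep indexed by the signature $(\lambda,\mu)$ is the image of a minimal idempotent $p = p_{\lambda,\mu}$ sitting in the commutant of the diagonal $\mcal U_N$-action on $V_N := (\mbb C^N)^{\otimes K_1} \otimes ((\mbb C^N)^*)^{\otimes K_2}$, with $K_1 = |\lambda|$ and $K_2 = |\mu|$. The character then factorises as $\chi_{\lambda,\mu}(g) = \Tr_{V_N}\bigl(p \cdot g^{\otimes K_1} \otimes \bar g^{\otimes K_2}\bigr)$. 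Extending linearly to $\mrm M_N(\mbb C)^{\otimes K}$ (with $K = K_1+K_2$) and normalizing by $\Tr(p)$ yields a state $\psi_N$ which is $\mcal U_N$-invariant, and in particular $\mcal S_N$-invariant, because $p$ commutes with the diagonal action. Applying Theorem \ref{Th:Main} with $K_3=0$ and $\mbf V_N$ absent produces the convergence in expectation and in probability of $W_\ell = U_\ell^{(N)\otimes K_1} \otimes (U_\ell^{(N)t})^{\otimes K_2}$ to a Haar unitary system with respect to $\psi_N$; using the adjoint relation $\overline U = (U^t)^*$ valid for unitary $U$ transfers this to the family $(\mbf U_N, \overline{\mbf U_N})$ in $\mcal R_N$. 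Almost sure convergence is then upgraded from convergence in probability via the standard subgaussian concentration of measure on $\mcal U_N$ (e.g.\ through the log-Sobolev inequality) and a Borel--Cantelli argument on polynomials in the $U_\ell^{(N)}$.

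\emph{Part 2 (amalgamated freeness).} Let $B_N := \mbb C[\mcal S_d]$ embedded in $\mrm M_N(\mbb C)^{\otimes d}$ by permutation of tensor factors; by Schur--Weyl each $W_\ell = U_\ell^{(N)\otimes d}$ commutes with $B_N$. Asymptotic freeness with amalgamation over $B_N$ amounts to convergence of the trace-preserving conditional expectation $E_N: \mrm M_N(\mbb C)^{\otimes d} \to B_N$ applied to alternating products, and this in turn reduces to the convergence of the linear functionals $\phi_{N,\sigma}(x) := \tr(\sigma x)$ evaluated on such products, for each $\sigma \in \mcal S_d$. Each $\phi_{N,\sigma}$ is $\mcal S_N$-invariant (by cyclicity of $\tr$ together with $[\sigma, g^{\otimes d}] = 0$ for $g \in \mcal S_N$), and can be written as a finite $\mbb C$-linear combination of $\mcal S_N$-invariant states by decomposing the self-adjoint and anti-self-adjoint parts of $\sigma$ into positive and negative spectral components (all of which lie in $B_N$, hence still commute with $\mcal S_N$). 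Theorem \ref{Th:Main}, applied with $K_1 = d$ and $K_2 = K_3 = 0$, then delivers convergence of each such moment. Identifying the limits with the $B$-valued moments of a Haar unitary system free with amalgamation over $\mbb C[\mcal S_d]$ equipped with its regular trace (which is the $N \to \infty$ limit of $\tr|_{B_N}$, since $\tr(\sigma) = N^{c(\sigma)-d} \to \delta_{\sigma,1}$) completes the argument, using the commutation of $W_\ell$ with $B_N$ to reduce $B_N$-valued moments to ordinary $\phi_{N,\sigma}$-moments.

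\emph{Main obstacle.} In Part 1 the delicate step is the algebraic realization of $\chi_{\lambda,\mu}$ as a normalized trace against a commutant idempotent, particularly the reconciliation of the $U^t$ appearing in Theorem \ref{Th:Main} with the $\overline U$ in $\mcal R_N$. In Part 2 the principal difficulty is matching the limits of the non-positive functionals $\phi_{N,\sigma}$ with the $B$-valued moments predicted by freeness with amalgamation over $\mbb C[\mcal S_d]$: this requires a careful combinatorial accounting that must track the partial degeneracy $\tr(\sigma) \to \delta_{\sigma,1}$ while still producing the nontrivial amalgamated structure in the limit.
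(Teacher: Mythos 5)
There is a genuine gap in Part 1. The statement to be proved concerns the $2K$-element family $(\mbf U_N,\overline{\mbf U_N})$, so the nontrivial $^*$-monomials $M$ you must control mix $U_\ell$ and $\overline{U_\ell}$ \emph{with the same index} $\ell$; the heart of the claim is the asymptotic $^*$-freeness of $U_\ell$ from $\overline{U_\ell}$ under $\chi_{\lambda,\mu}$. Your reduction does not reach this. Writing $\chi_{\lambda,\mu}(V)=\Tr\big(p\,(V^{\otimes K_1}\otimes \overline{V}^{\otimes K_2})\big)/\Tr(p)$ and using multiplicativity of $V\mapsto V^{\otimes K_1}\otimes\overline V^{\otimes K_2}$, the quantity $\chi_{\lambda,\mu}\big(M(\mbf U_N,\overline{\mbf U_N})\big)$ becomes $\psi_N$ applied to a word in the $2K$ generators $U_\ell^{\otimes K_1}\otimes\overline{U_\ell}^{\otimes K_2}$ and $\overline{U_\ell}^{\otimes K_1}\otimes U_\ell^{\otimes K_2}$. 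Neither of these is a $^*$-monomial in $W_\ell=U_\ell^{\otimes K_1}\otimes (U_\ell^t)^{\otimes K_2}$ (note $W_\ell^*=(U_\ell^*)^{\otimes K_1}\otimes \overline{U_\ell}^{\otimes K_2}$, which differs from $U_\ell^{\otimes K_1}\otimes\overline{U_\ell}^{\otimes K_2}$ in the first block), and more importantly the family has $2K$ members built from only $K$ independent Haar unitaries, with the same $U_\ell$ appearing both ``straight'' and conjugated. Theorem \ref{Th:Main} requires each generator $W_\ell$ to be driven by a distinct independent $U_\ell$, so it cannot deliver the freeness of $U_\ell$ from $\overline{U_\ell}$. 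The paper closes exactly this hole with the Mingo--Popa theorem (Theorem \ref{Th:MP}), which gives almost sure asymptotic $^*$-freeness of $(\mbf U_N,\mbf U_N^t)$ with respect to $\tr_N$, combined with the uniform asymptotic $\chi_{\lambda,\mu}(V)=(\tr_N V)^{l(\lambda)}(\tr_N\overline V)^{l(\mu)}(1+O(N^{-1}))$; your proposal never invokes Mingo--Popa or any substitute for it, and without such an input the argument fails. (The a.s.\ upgrade via concentration is a further addition you would need to actually carry out, since Theorem \ref{Th:Main} only yields convergence in probability; in the paper a.s.\ convergence comes for free from the second-order estimate behind Theorem \ref{Th:MP}.)

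For Part 2 your strategy (decompose $\phi_{N,\sigma}=\tr(\sigma\,\cdot)$ into $\mcal S_N$-invariant states supported on $\mbb C[\mcal S_d]$, apply Theorem \ref{Th:Main} with $K_1=d$, then invert the degenerating Gram matrix $\tr(\sigma\tau)\to\delta_{\tau,\sigma^{-1}}$ to recover $E_N$) is plausible in outline, but the step you yourself flag as the main obstacle --- matching the limits of the $\phi_{N,\sigma}$-moments with the $\mbb C[\mcal S_d]$-valued moments of an amalgamated-free Haar system --- is precisely where the content lies, and it is not carried out. The paper avoids this bookkeeping entirely by a more elementary route: it exhibits $\mbf U_N^{\otimes d}$ together with the leg-permutation operators as a representation of $\mbb F_{2K}\times\mcal S_d$, shows via the classical identity $\tr_N^{\otimes d}(\sigma\cdot V^{\otimes d})=N^{c(\sigma)-d}\prod_{\text{cycles}}\tr_N(V^{|\text{cycle}|})$ and Voiculescu's asymptotic freeness that the normalized character converges to the left regular character, and then reads off amalgamated freeness from the isomorphism of $\mbb F_{2K}\times\mcal S_d$ with the free product of copies of $\mbb Z\times\mcal S_d$ amalgamated over $\mcal S_d$. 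You should either adopt that reduction or supply the missing combinatorial identification in full.
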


\begin{remark}
The above theorem extends the result of \cite{mp} to the case of arbitrary sequences of irreducible representations
(associated to a given signature) in the limit of large dimension.
\end{remark}

\subsection{Organization of Paper}

The remainder of this paper is organized as follows. In Section \ref{sec:reminders}, we recall
basic notions and results in free probability that are used in this paper. Section \ref{Sec:InvState}
prepares the proof of the main result, while Section \ref{Sec:ProofTh:Main} supplies the actual proof. 
Sections \ref{Sec:Applications} and \ref{Sec:Discussion} are devoted to applications of the main result,
including the proof of Theorem \ref{Th:Applications}. 

\subsection*{Acknowledgements}

B. Collins was partially funded by
 JSPS KAKENHI 17K18734, 17H04823, 15KK0162 
and ANR- 14-CE25-0003. 
P. Y. Gaudreau Lamarre was partially funded by an NSERC Postgraduate Scholarship and a Gordon Y. S. Wu Fellowship. C. Male was partially funded by a Partenariat Hubert Curien SAKURA.

Much of this work was conducted during successive visits
to Kyoto University by the second- and third-named authors. 
The hospitality of the mathematics department at Kyoto University
and the organizers of the conference ``Random matrices and their applications"
(held in May 2018) is gratefully acknowledged.
The authors also had other occasions to work on this project during workshops held at PCMI, CRM and the Fields institute, and they grateful to these institutions for a fruitful collaborative environment during these events.

\section{Background in Free Probability}\label{sec:reminders}

In this section, we go over the basic definitions and results in free probability that are used in this paper.
For a thorough introduction to the subject and its applications to random matrix theory, the reader is referred
to \cite{msbook,ns,vdn}.

\subsection{Non-commutative Probability and Haar Unitary Systems}

Recall that a \emph{non-commutative probability space} is defined as a pair $(\A ,\phi )$, where $\A$ is a unital algebra and $\phi:\A\to\mbb C$ is a unital ($\phi (1) =1$) linear functional; elements of $\A$ are called \emph{non-commutative random variables}.

\begin{definition}\label{def: state}
A \emph{$^*$-probability space} is a non-commutative probability space $(\A,\phi)$, where $\A$ is a $^*$-algebra
(i.e., a unital algebra endowed with an antilinear involution such that $(ab)^*=b^*a^*$ for any $a,b\in \A$)
and $\phi$ is a {\it state} (i.e., $\phi(aa^*)\geq 0$, $\forall a\in \A$).
We say that $\phi$ is \emph{tracial} whenever $\phi(ab)=\phi(ba)$ for any $a,b\in \A$.
\end{definition}

A non-commutative random variable $u$ in a $^*$-probability space $(\A,\phi)$ is said to be {\it unitary}
if $u^*u=uu^*=1$, and {\it Haar unitary} if it also satisfies $\phi(u^n)=0$ for all $n\in\Z\setminus\{0\}$.

Recall that unital $^*$-subalgebras $\A_i$ ($i\in I$) of $\A$ are called {\it $^*$-free} if
for every $t\geq 1$, $i(1),\ldots,i(t)\in I$, and $a_{i(1)}\in\A_{i(1)},\ldots,a_{i(t)}\in\A_{i(t)}$,
one has $\phi(a_{i(1)}\cdots a_{i(t)})=0$
whenever $i(1)\neq i(2)\neq\cdots\neq i(t)$ and $\phi(a_{i(1)})=\cdots=\phi(a_{i(t)})=0$.
A family of non-commutative random variables $x_i$ $(i\in I)$ is said to be \emph{$^*$-free}
if the collection of unital $^*$-subalgebras generated by the $x_i$ are $^*$-free.

\begin{definition}\label{def: Haar system}
A family $\mbf u=(u_1\etc u_L)$ of non-commutative random variables is called a {\it Haar unitary system} if the
$u_\ell$ are $^*$-free Haar unitary non-commutative random variables.
\end{definition}

\subsection{Asymptotic Freeness of Random Matrices}

Let $(\Omega,\mathscr F,\mathbf P)$ be a probability space, and let $L^{\infty-}=L^{\infty-}(\Omega, \mbb C)$ denote the
$^*$-algebra of random variables with finite moments of all orders.
Given $N\in\mbb N$, let $A\in\mathbb M_N(L^{\infty-})$ be a random $N\times N$
matrix with entries in $L^{\infty-}$. If we are given a state $\psi_N:\mathbb M_N(\mbb C)\to\mbb C$,
then there are two natural $^*$-probability spaces in which $A$ can be studied:
we can consider $A$ an element of $(\mathbb M_N(L^{\infty-}),\E[\psi_N])$,
and for every $\omega\in\Omega$, the realization $A(\omega)$ of $A$ is an element of
$(\mathbb M_N(\mathbb C),\psi_N)$.

Let $X_i,X_i^*$ $(i\in I)$ be a collection of non-commuting indeterminates. We call a
non-commutative polynomial $P\in\mbb C\langle X_i,X_i^*\rangle_{i\in I}$ a {\it $^*$-polynomial}
(here, $\mbb C\langle X_i,X_i^*\rangle_{i\in I}$ denotes the unital algebra freely generated by the
collection of non-commuting indeterminates $X_i$ and $X_i^*$). If $P$ is a monomial,
then it may also be called a {\it $^*$-monomial}.

\begin{definition}\label{def: distribution}
Given a collection $\mbf a=(a_i)_{i\in I}$ of non-commutative random variables in a $^*$-probability space $(\A,\phi)$,
the \emph{$^*$-distribution} of $\mbf a$ is defined as the linear functional
$\mu_{\mbf a}:\mbb C\langle X_i,X_i^*\rangle_{i\in I} \to \mbb C$ determined by the relation
$$\mu_{\mbf a}(P)=\phi\big(P(\mbf a)\big).$$
\end{definition}

\begin{definition}\label{def: convergences}
For every $N\in\mbb N$, let $\mbf A_N=(A^{(N)}_1,\ldots,A^{(N)}_L)$ be a family of
$N\times N$ random matrices with entries in $L^{\infty-}$. Let $\mbf a=(a_1,\ldots,a_L)$
be a family of non-commutative random variables in some $^*$-probability space $(\mcal A,\phi)$.
We recall two notions of convergence (as $N\to\infty$) of $\mbf A_N$ as elements of
the space $(\mathbb M_N(\mathbb C),\psi_N)$:
\begin{itemize}
\item $\mbf A_N\to\mbf a$ {\it almost surely} if for almost every realization of $\mbf A_N$,
$$\lim_{N\to\infty}\psi_N\big(P(\mbf A_N)\big)=\phi\big(P(\mbf a)\big)$$
for every $^*$-polynomial P;
\item $\mbf A_N\to\mbf a$ {\it in expectation} if for every $^*$-polynomial P,
$$\lim_{N\to\infty}\mbb E\big[\psi_N\big(P(\mbf A_N)\big)\big]=\phi\big(P(\mbf a)\big).$$
Note that here, `in expectation' applies to the distribution, i.e. it tells that for any (self adjoint) polynomial, the expectation 
of its empirical eigenvalues distribution converges. 
\end{itemize}
\end{definition}

\begin{remark}
If the limiting family $\mbf a=(a_1,\ldots,a_L)$ in the above definition is $^*$-free,
then we say that $\mbf A_N$ is {\it asymptotically $^*$-free} almost surely, in probability,
or in expectation.
\end{remark}

\subsection{Tensor Freeness}\label{section: tfc}

\begin{lemma}[Tensor Freeness]\label{lem: Haar TFC}
Let $\mbf u=(u_1,\ldots,u_L)$ be a Haar unitary system in $(\mcal A,\phi)$,
and let $\mbf v=(v_1,\ldots,v_L)$ be a family of unitary non-commutative
random variables in $(\mcal B,\psi)$. Then,
	$$\mbf w=(u_1\otimes v_1,\ldots,u_L\otimes v_L)$$
is a Haar unitary system in $(\mcal A\otimes\mcal B,\phi\otimes\psi)$.
\end{lemma}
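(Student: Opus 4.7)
The plan is a direct verification from the definitions, exploiting the fact that each $w_\ell = u_\ell \otimes v_\ell$ is unitary, which tightly constrains the $^*$-algebra it generates.

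First I would verify that each $w_\ell$ is a Haar unitary non-commutative random variable in $(\mcal A\otimes\mcal B,\phi\otimes\psi)$. Unitarity is immediate from $(a\otimes b)^*(a\otimes b)=a^*a\otimes b^*b$. For the Haar property, observe that $w_\ell^n = u_\ell^n\otimes v_\ell^n$ for every $n\in\Z$ (with $w_\ell^{-n}=(w_\ell^*)^n$), so
\[
(\phi\otimes\psi)(w_\ell^n) = \phi(u_\ell^n)\,\psi(v_\ell^n) = 0 \qquad (n\neq 0),
\]
since $u_\ell$ is Haar unitary.

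Next I would establish $^*$-freeness of the family $\mbf w$. Because $w_\ell$ is unitary, the unital $^*$-algebra it generates is the linear span of $\{w_\ell^n : n\in\Z\}$, and by the Haar computation above the centered part of this algebra is spanned by $\{w_\ell^n : n\neq 0\}$. By linearity, to check freeness it therefore suffices to verify that
\[
(\phi\otimes\psi)\bigl(w_{\ell(1)}^{n(1)}\cdots w_{\ell(t)}^{n(t)}\bigr)=0
\]
whenever $\ell(1)\neq \ell(2)\neq\cdots\neq\ell(t)$ and $n(j)\in\Z\setminus\{0\}$ for each $j$. Using the tensor identity $w_{\ell(j)}^{n(j)}=u_{\ell(j)}^{n(j)}\otimes v_{\ell(j)}^{n(j)}$ and multiplicativity of $\phi\otimes\psi$ on tensors, this factors as
\[
\phi\bigl(u_{\ell(1)}^{n(1)}\cdots u_{\ell(t)}^{n(t)}\bigr)\;\psi\bigl(v_{\ell(1)}^{n(1)}\cdots v_{\ell(t)}^{n(t)}\bigr),
\]
and the first factor vanishes by the $^*$-freeness of the Haar unitary system $\mbf u$ (each $u_{\ell(j)}^{n(j)}$ is centered and the indices alternate). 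This yields the desired vanishing regardless of the second factor.

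I do not anticipate a genuine obstacle: the whole argument hinges on the identity $w_\ell^n=u_\ell^n\otimes v_\ell^n$, which is what makes the diagonal tensor construction so well behaved. The only point requiring a little care is the reduction of the freeness condition to monomials in $w_\ell^n$ with $n\neq 0$, which uses both unitarity of $w_\ell$ (to describe the generated $^*$-algebra in terms of integer powers) and the Haar property of $w_\ell$ just established (to identify the centered subspace).
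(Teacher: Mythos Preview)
Your proof is correct and follows essentially the same approach as the paper: factor $(\phi\otimes\psi)$ on tensor monomials and use that the $\phi$-factor vanishes because $\mbf u$ is a Haar unitary system. The paper's version is slightly more terse---it observes directly that $(\phi\otimes\psi)(M(\mbf w))=\phi(M(\mbf u))\,\psi(M(\mbf v))$ for every $^*$-monomial $M$ and concludes that $\mbf w$ has the same $^*$-distribution as $\mbf u$---whereas you explicitly reduce to alternating powers and verify the freeness axiom, but this is only a stylistic difference.
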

\begin{proof}
Clearly, the tensor products $u_\ell\otimes v_\ell$ are unitary.
Moreover, for any $^*$-monomial $M$, one has
	$$(\phi\otimes\psi)\big(M(\mbf w)\big)=\phi\big(M(\mbf u)\big)\times\psi\big(M(\mbf v)\big).$$
If $M$ is trivial (i.e., $M(\bar{\mbf u})=1$ for any family $\bar {\mbf u}$ of unitary operators), then 
$\phi\big(M(\mbf u)\big)=\psi\big(M(\mbf v)\big)=1$. Otherwise, the fact that $\mbf u$ is
$^*$-free implies that $(\phi\otimes\psi)\big(M(\mbf w)\big)=\phi\big(M(\mbf u)\big)=0$.
Thus, $\mbf w$ is a Haar unitary system.
\end{proof}

\begin{remark}
If we are given families of variables $(a_1,\ldots,a_L)$
and $(b_1,\ldots,b_L)$ in respective non-commutative probability spaces $(\mcal A,\phi)$
and $(\mcal B,\psi)$, and we assume that the $a_\ell$ are $^*$-free,
then it is not necessarily the case that the tensor product collection
	$$(a_1\otimes b_1,\ldots,a_L\otimes b_L)$$
is $^*$-free in $(\mcal A\otimes\mcal B,\phi\otimes\psi)$.
Lemma \ref{lem: Haar TFC} is a special case of a more general class
of examples that satisfy the {\it tensor freeness conditions} \cite[Definition 1.4 and Proposition 1.5]{cgl},
which guarantees that the freeness present in one collection propagates to the
tensor product collection.
\end{remark}

We may now prove Proposition \ref{prop:cgl absorption principle}.

\begin{proof}[Proof of Proposition \ref{prop:cgl absorption principle}]
For the sake of readability, let us denote
$$\mbf U_N=(U_1^{(N)} ,\ldots,U_L^{(N)}),
\qquad
\mbf V_N=(V_1^{(M)} ,\ldots,V_L^{(M)}),$$
and
$$\mbf W_N=(U_1^{(N)} \otimes V_1^{(M)} ,\ldots, U_L^{(N)} \otimes V_L^{(M)}).$$
By \cite{hp,Voic91}, $\mbf U_N$
converges to a Haar unitary system $\mbf u=(u_1,\ldots,u_L)$ almost surely.
Since unitary matrices are bounded in operator norm, every subsequence of $N$ has a further subsequence along which
$\mbf V_M$ and $\mbf W_N$ converge almost surely to some limiting families $\mbf v=(v_1,\ldots,v_L)$ and $\mbf w=(w_1,\ldots,w_L)$,
respectively. 
Note that $\mbf W_N$ is the sequence of tensor products of $\mbf U_N$ and $\mbf V_N$.
Hence every limit $\mbf w$ of the subsequences is of the form $w_\ell=u_\ell\otimes v_\ell$ ($1\leq\ell\leq L$),
and satisfies the hypotheses of Lemma \ref{lem: Haar TFC},  so it is a Haar unitary system.
Since there is a single possible limit for every subsequence, $\mbf W_N$ converges almost surely to a Haar unitary system.
Since the matrices of $\mbf W_N$ are bounded in operator norm, the convergence also holds in expectation.
\end{proof}

\subsection{Duality of Invariance}
\label{Sec: Duality of Invariance}

We now explain the claim made in Remark \ref{Rmk: Invariance Duality} that,
in the context of Theorem \ref{Th:Main}, we can always assume that
$\psi_N$ and $\mbf V_N$ are both $\mcal S_N$-invariant.
Let $\mbf B=(B_1,\ldots,B_L)$ be a collection of random matrices in $\mrm{M}_N(\mbb C)^{\otimes K}$ and 
$\psi: \mrm{M}_N(\mbb C)^{\otimes K} \to \mbb C$ be a linear form.

Suppose that $\mbf B$ is $\mcal X_N$-invariant, and let $U$ be a unitary matrix distributed
according to the Haar measure on $\mcal X_N$, independently of $\mbf B$. Consider the collection
	\begin{align}\label{eqn: Duality of Invariance}
	\tilde {\mbf B}:=(U^{\otimes K}B_1U^{*\otimes K},\ldots,U^{\otimes K}B_LU^{*\otimes K}).
	\end{align}
By the invariance of $\mbf B$, for every $^*$-polynomial $P$,
	$$\mbb E_U\big[\psi\big(P(\tilde {\mbf B})\big)\big]=\mbb E_U\big[\psi\big(U^{\otimes K}P(\mbf B) U^{*\otimes K}\big)\big]$$
is equal in distribution to $\psi\big(P(\mbf B)\big)$, and since $U$ is Haar distributed, the form
defined as
	$$\tilde \psi(A):=\mbb E_U\big[\psi\big(U^{\otimes K}A U^{*\otimes K}\big)\big],\qquad A\in\mrm{M}_N(\mbb C)^{\otimes K}$$
is $\mcal X_N$-invariant.
Thus, if we are interested in the large $N$ limits of expectations
$\mbb E\big[\psi\big(P(\mbf B)\big)\big]$,
then there is no loss of generality in assuming that $\psi=\tilde\psi$, and, in particular, that $\psi$ is
$\mcal X_N$-invariant.

Similarly, if $\psi$ is $\mcal X_N$-invariant, then
	$$\psi(A)=\psi(U^{\otimes K}A U^{*\otimes K}),\qquad  A\in\mrm{M}_N(\mbb C)^{\otimes K}$$
for any $U\in\mcal X_N$, and thus there is no loss of generality in
replacing $\mbf B$ by \eqref{eqn: Duality of Invariance}, which is $\mcal X_N$-invariant
if $U$ is independent of $\mbf B$ and Haar distributed.

\subsection{Freeness with Amalgamation}

The notion of freeness with amalgamation was introduced by Voiculescu 
as a generalization of freeness---see for example \cite[Section 3.8]{vdn}---and appears naturally in several contexts
of large random matrices.
In particular, let us consider two matrices whose entries are non-commutative random variables in a space $(\A ,\phi )$. If the entries of the respective matrices are free, then the two matrices themselves are free with amalgamation over scalar matrices \cite[Section 9, Corollary 14]{msbook}. Together with the so-called linearization trick, this result gives a powerful method to compute the spectral distribution of self-adjoint $^*$-polynomials in  free variables \cite[Section 10.3]{msbook}.
Moreover, freeness with amalgamation over the diagonal holds for independent permutation invariant matrices with variance profiles \cite{Sh96,ACDGM21} and appear in the second order distribution of certain Wigner and deterministic matrices \cite{maleJOT}.

In a $^*$-algebra $\A$, we pick a unital subalgebra $\B$
and we say that a unital linear functional 
$\E:\A\to\B$
 is a \emph{conditional expectation} of $\A$ onto $\B$ if it satisfies
$\E (abc) =a\E (b) c$ for all $a,c\in \B,~b\in\A$.
In other words, $\E$ can be seen as an orthogonal projection of $\A$ onto $\B$ with respect to
an appropriate scalar product arising from a state preserved by $\E$.
$\E$ is not always guaranteed to exist; however, in the case of von Neumann algebras, there are systematic existence
theorems,
and existence entails uniqueness.
We mostly work in the context of finite dimensional algebras which are automatically von Neumann algebras,
so the existence and uniqueness is granted in the cases of interest to us.
For more details we refer to Theorem 4.2 of section IX-4 of \cite{MR1943006}.

Next, we get to the definition of freeness with amalgamation.
In the above context of $\B\subset \A$ with $1\in \B$ and
a conditional expectation $\E$ from $\A$ onto $\B$, we 
consider an arbitrary index set $I$ and
take a family $(\A_i)_{i\in I}$ of  subalgebras satisfying $\B\subset \A_i\subset \A$.
The family $(\A_i)_{i\in I}$ is said to be \emph{free with amalgamation over} $\B$ if and only if 
$$\E (a_1\ldots a_l)=0$$
whenever $\E (a_i)=0$ and $a_j\in\A_{i_j}$, with $i_1\ne i_2, i_2\ne i_3,\cdots$. 
For a systematic treatment, we refer to \cite[Section 9.2]{msbook}.
One key example is as follows: if $1\in\A_1,\ldots \subset \A$ are free, then 
$\mathbb M_k(\A_1), \mathbb M_k(\A_2), \ldots \in \mathbb M_k (\A)$
are free with amalgamation over $\mathbb M_k (\C)$. 

Next we get to the definition of conditional distribution. 

\begin{definition}\label{def: conditional distribution}
Given a collection $\mbf a=(a_i)_{i\in I}$ of non-commutative random variables in a $^*$-probability space $(\A,\phi)$
endowed with a conditional expectation $\E: \A\to \B$,
the \emph{$^*$-conditional distribution} of $\mbf a$ is defined as the linear functional
$\mu_{\mbf a}:\B \langle X_i,X_i^*\rangle_{i\in I} \to \B$ determined by the relation
$$\mu_{\mbf a}(P)=\E\big(P(\mbf a)\big).$$
\end{definition}

Finally, we can provide a definition of asymptotic freeness with amalgamation. 

\begin{definition}\label{def: convergences conditionnelles}
For every $N\in\mbb N$, let $\mbf A_N=(A^{(N)}_1,\ldots,A^{(N)}_L)$ be a family of
non-commutative random variables
 in a $^*$-probability space $(\A^{(N)},\phi^{(N)})$
endowed with a conditional expectation $\E^{(N)}: \A^{(N)}\to \B$ -- note here that we require $\B$ to be the same
for each $N$. 

 Let $\mbf a=(a_1,\ldots,a_L)$
be a family of non-commutative random variables in some $^*$-probability space $(\mcal A,\phi)$
with a conditional expectation $\E: \A\to \B$.
Then, we say that $\mbf A_N\to\mbf a$  if 
\begin{align}
\label{Eq: Conv in NC Dist}
\lim_{N\to\infty}\E_N\big(P(\mbf A_N)\big)=\E\big(P(\mbf a)\big)
\end{align}
for every $P \in \B\langle X_i,X_i^*\rangle$,  the set of all polynomials in $X_i$ and $X_i^*$ with coefficients from
 $\B$. If, in addition to \eqref{Eq: Conv in NC Dist},
the $*$-algebras $\mcal A_i:=\B\langle a_i,a_i^*\rangle$ are {\it free with amalgamation over $\B$ in $\A$},
then we say that $\mbf A_N$ is asymptotically free with
 amalgamation over $\B$ in $\mcal A$.
\end{definition}

\section{Invariant states on tensor matrix spaces}\label{Sec:InvState}

\subsection{Proof Overview Part 1}\label{Sec:ProofTh:MainOverview 1}

For any subgroup $\mcal X_N$ of $\mcal U_N$, the set of $\mcal X_N$-invariant
linear forms on $\mrm M_N(\mbb C)^{\otimes K}$ is a finite dimensional vector space.
In particular, there exists a finite collection of {\it $\mcal X_N$-elementary linear forms}
$\Tr_{N,1},\Tr_{N,2},\ldots$ that are $\mcal X_N$-invariant and such that for every other
$\mcal X_N$-invariant form $\psi_N$, one has
		$$\psi_N = \sum_i a_{N,i} \Tr_{N,i}$$
for some scalars $a_{N,1},a_{N,2},\ldots$.

\begin{remark}
For the classical groups (such as $\mcal U_N$, $\mcal O_N$ and $\mcal S_N$),
the invariant linear forms are given by the Schur-Weyl duality. In the case of
$\mcal S_N$, we can compute the elementary forms and their associated
constants $a_{N,i}$ explicitly by elementary means (see Proposition
\ref{Prop:SnInv 1} and its proof). 
\end{remark}

The first step of the proof of Theorem \ref{Th:Main} consists of identifying
the $\mcal S_N$-elementary linear forms. In Proposition \ref{Prop:SnInv 1}
below, we prove that the latter are characterized by the set of partitions of $\{1,\ldots,2K\}$
(which we denote $\mcal P(2K)$), so that $\psi_N$ can be written as a sum of the form
	$$\psi_N=\sum_{\pi\in\mcal P(2K)}a_{N,\pi}\,\Tr_{N,T_0^\pi}.$$
A precise description of the $\mcal S_N$-elementary linear forms $\Tr_{N,T_0^\pi}$ can be found in
Definition \ref{Def:SnElement}.

\begin{remark}
A different description of these elementary functions can also be found in \cite{Gab}.
\end{remark}

The second step of the proof consists of bounding the decay rate of the constants $a_{N,\pi}$
that appear in the above expansion for large $N$. In Proposition \ref{Prop:SnInv 2}, we prove that
there exist positive constants $\mathfrak L(T_0^\pi)$ (see Definition \ref{Def:Leaves})
such that $a_{N,\pi}=O(N^{-\mathfrak L(T_0^\pi)/2})$ as $N\to\infty$.

The third and last step of our proof is to understand the growth rate of
the $\mcal S_N$-elementary linear forms $\Tr_{N,T_0^\pi}$ evaluated
in the matrices $\mbf W_N$ defined in \eqref{Eq:Matrices W}, especially as compared
to $N^{\mathfrak L(T_0^\pi)/2}$. This step is carried out
in Section \ref{Sec:ProofTh:Main}; see Section \ref{Sec:ProofTh:MainOverview 2} for
a detailed overview of this part of the argument.

The remainder of this section is devoted to the proof of the first two steps outlined above.

\subsection{The $\mcal S_N$-Elementary Linear Forms}\label{Sec:SnElemet}

\subsubsection{Basis Elements}

In order to describe the $\mcal S_N$-elementary linear forms, we first introduce
several notions in graph theory. In what follows, given an integer $K\geq1$, we use the notation $[K]=\{1,2,\ldots,K\}$.

\begin{definition}
We say that a couple $(V,E)$ is a directed graph if $V$ is a set of vertices and
$E$ is a multi-set of directed edges, i.e., ordered pairs of elements of $V$. More specifically,
$(v,w)\in E\subset V^2$ means that there is a directed edge from $v$ to $w$, which we represent graphically
as $v\to w$. We call $w$ the target of that edge, and $v$ the source. We allow $(V,E)$ to contain loops and multiple edges,
and to be disconnected.

Let $K\geq 1$ be an integer. A {\it linear graph of order $K$} consists of a triplet $T=(V,E,\gamma)$
that satisfies the following conditions.
		\begin{itemize}
			\item $(V,E)$ is a finite directed graph.
			\item $\gamma$ maps every element of $E$ to a unique number in $[K]$ (thus indicating that $e\in E$ is the $\gamma(e)$-th
			edge for every $e\in E$). We emphasize that multiple edges are associated with different numbers by $\gamma$, so that
			$\gamma$ is a bijection from the multi-set $E$ to $[K]$.
		\end{itemize} 
\end{definition}

\begin{remark}
We note that the set $[K]$ can be replaced by any totally ordered set in the above definition.
\end{remark}

\begin{remark}
We always consider linear graphs up to isomorphisms that preserve the order of the edges.
That is, two linear graphs $T=(V,E,\gamma)$ and $T'=(V',E',\gamma')$
are considered equal if there is a directed graph isomorphism $\Phi:(V,E)\to(V',E')$ such that
$\gamma(e)<\gamma(\bar e)$ if and only if $\gamma'\big(\Phi(e)\big)<\gamma'\big(\Phi(\bar e)\big)$.
\end{remark}

\begin{remark}
A linear graph may be illustrated as follows
	$$T=\cdot\overset{3}\rightarrow\cdot \overset{2}\leftarrow \cdot \overset{1}\leftarrow  \cdot.$$
In the above illustration,
the dots represent the vertices,
the arrows represent the directed edges (making this particular example a linear graph of order $3$),
and the value of $\gamma$ at an edge is displayed above the edge in question.
\end{remark}

\begin{definition}
We define the {\it minimal linear graph of order $K$}, denoted $T_0=(V_0,E_0,\gamma_0)$, as the following linear graph.
The vertices consist of the set $V_0=[2K]$, the $K$ edges are given by $e_k=(K+k,k)$ for $1\leq k\leq K$,
and we assign the order $\gamma_0(e_k)=k$.
\end{definition}

\begin{remark}
The minimal linear graph of order $K$ is illustrated in Figure \ref{fig1_LinearGraphs}.
\end{remark}

\begin{figure}[!t]
\centering%
{\includegraphics{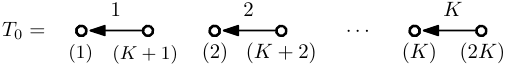}}
\caption{A representation of the linear graph $T_0$.}
  \label{fig1_LinearGraphs}
\end{figure}

In the following definitions, we use $\mcal P(S)$ to denote the set of partitions of a set $S$.
In the special case where $S=[K]$ for some integer $K\in\mbb N$, we simply denote $\mcal P(S)=\mcal P(K)$.

\begin{definition}\label{Def:Quotient}
Let $T=(V,E,\gamma)$ be a linear graph and $\pi\in\mcal P(V)$ be a partition of its vertex set.
We denote by $T^{\pi}=(V^\pi,E^\pi,\gamma^\pi)$ the {\it quotient graph} of $T$ for the partition $\pi$,
that is, the vertices $V^\pi$ are the blocks of $\pi$,
every edge $e=(v,w)$ of $T$ induces the edge $e^\pi=(C_{v}, C_{w})\in E^\pi$, where $C_v,C_w\in\pi$ are the blocks containing $v$ and $w$ respectively,
and $\gamma^\pi(e^\pi)=\gamma(e)$.
\end{definition}

\begin{figure}[!t]
\centering%
{\includegraphics{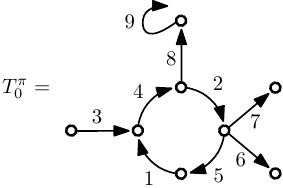}}
\caption{The quotient $T_0^\pi$ for $K=9$ and $\pi = \big\{ \{1,3,13\}, \{2,14,15,16\}, \{4,11,17\}, \{5,10\}, \{6\}, \{7\},$ $ \{8,9,18\}, \{12\}\big\} $. For instance, the first block $\{1,3,13\}$ means that the following vertices are equal: the targets of the 1st and 3rd edges and the source of the 4th edge (since $4=13-9=13-K$).}
  \label{fig2_Quotient}
\end{figure}

\begin{remark}
A quotient of $T_0$ is illustrated in Figure \ref{fig2_Quotient}.
\end{remark}

\begin{remark}\label{Rmk: no trivial component bijection}
If a linear graph $T$ of order $K$
has no trivial component (i.e., single vertices with no edge), then it is a quotient of the minimal linear graph $T_0$, that is, $T=T_0^\pi$ for some $\pi\in\mcal P(2K)$.
In fact, since this partition is unique, the map $\pi\mapsto T_0^{\pi}$ is a bijection between $\mcal P(2K)$ and the set of linear graphs of order $K$ with no trivial component.
\end{remark}

We may now finally define the $\mcal S_N$-elementary linear forms and state the first main result of this section.

\begin{definition}\label{Def:SnElement}
Let $N,K\in\mbb N$.
For every linear graph $T$ of order $K$,
we introduce an associated linear form $\Tr_{N,T}:\mrm M_N(\mbb C)^{\otimes K} \to \mbb  C$
determined by the following relation: For every $A_1,\ldots,A_K\in\mrm M_N(\mbb C)$,
	\begin{align}\label{Eq:SnElement}
	\Tr_{N,T}(A_1\otimes \dots \otimes A_K) =  \sum_{ \phi : V\to [N]} \prod_{e=(v,w)\in E} A_{\gamma(e)}\big( \phi(w), \phi(v) \big).
	\end{align}
(In the above, $\phi : V\to [N]$ denotes an arbitrary function from the set of vertices $V$ to $[N]$,
so that \eqref{Eq:SnElement} contains $N^{|V|}$ summands.)
We call such $\Tr_{N,T}$ (unormalized) $\mcal S_N$-\emph{elementary linear forms} of order $K$.	
\end{definition}

\begin{remark}
In general $\Tr_{N,T}$ is neither tracial nor a state.
For a non-tracial counterexample, note that the linear graph
		$T = \cdot \overset{1}\leftarrow \cdot$
of order $1$ is such that
		$$\Tr_{N,T}(A) =  \sum_{i,j=1}^N A(i,j),\qquad A\in\mrm M_N(\mbb C).$$
This is clearly not tracial for $N\geq 2$.
For an example that fails to be a state, note that the linear graph
		$T = \cdot \overset{1}\leftarrow \cdot \overset{2}\leftarrow  \cdot$
of order $2$ is such that 
		$$\Tr_{N,T}(A_1\otimes A_2) = \sum_{i,j,k=1}^NA_1(i,j)A_2(j,k)=\sum_{i,k=1}^N A_1A_2(i,k).$$
This linear form is not positive for $N\geq2$.
\end{remark}

\begin{remark}
Clearly, the $\mcal S_N$-elementary linear forms are invariant under order-preserving isomorphisms on the linear graphs.
Moreover if a linear graph has a trivial component,
then deleting that vertex changes the associated linear form by a multiplicative factor of $N$.
Hence it is easy to see that, up to multiplicative constants, there is a finite number of $\mcal S_N$-elementary
linear forms of order $K$.

Combining this observation with Remark \ref{Rmk: no trivial component bijection},
one expects that we need only consider $\mcal S_N$-elementary linear forms $\Tr_{N,T}$
such that $T$ is a quotient of the minimal graph. The following proposition confirms
that this is the case.
\end{remark}

\begin{proposition}\label{Prop:SnInv 1}
The set of $\mcal S_N$-elementary linear forms of order $K$ generates the space of
$\mcal S_N$-invariant linear forms on $\mrm M_N(\mbb C)^{\otimes K}$.
In particular, for every $\mcal S_N$-invariant form $\psi_N$,
there exists constants $a_{N,\pi}$ (where $\pi\in\mcal P(2K)$) such that
	\begin{align}\label{Eq:SnInv 1}
		\psi_N=\sum_{\pi\in\mcal P(2K)}a_{N,\pi}\,\Tr_{N,T_0^\pi}.
	\end{align}
\end{proposition}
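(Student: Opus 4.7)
The plan is to reduce the claim to a Möbius inversion argument on the partition lattice $\mcal P(2K)$.

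First, I would characterize the space of $\mcal S_N$-invariant linear forms via the canonical basis of matrix units. Any linear form $\psi_N$ on $\mrm M_N(\mbb C)^{\otimes K}$ is determined by its values on tensors of matrix units $E_{i_1 j_1}\otimes\cdots\otimes E_{i_K j_K}$. Since $\sigma\in\mcal S_N$ acts on matrix units by $E_{i,j}\mapsto E_{\sigma(i),\sigma(j)}$, invariance of $\psi_N$ is equivalent to requiring that $\psi_N(E_{i_1 j_1}\otimes\cdots\otimes E_{i_K j_K})$ depend only on the equality pattern of the $2K$-tuple of indices. Identifying this pattern with a partition $\pi_{\mathrm{eq}}\in\mcal P(2K)$ -- using the labelling of the vertex set $[2K]$ of $T_0$, where position $k$ carries the row index $i_k$ and position $K+k$ carries the column index $j_k$ -- one sees that every $\mcal S_N$-invariant form is of the shape $\psi_N=\sum_\rho c_{N,\rho}\chi_\rho$, where $\chi_\rho$ is the ``characteristic'' invariant form with $\chi_\rho(E_{i_1 j_1}\otimes\cdots\otimes E_{i_K j_K})=\one[\pi_{\mathrm{eq}}=\rho]$ and $\rho$ ranges over partitions realizable as an equality pattern (i.e., $|\rho|\leq N$).

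Next, I would compute the values of the elementary forms $\Tr_{N,T_0^\pi}$ on matrix units. Inserting $A_k=E_{i_k j_k}$ into \eqref{Eq:SnElement}, the sum over $\phi\colon V^\pi\to[N]$ collapses to the count of assignments simultaneously satisfying $\phi(C_k)=i_k$ and $\phi(C_{K+k})=j_k$ for every $k\in[K]$, where $C_\ell\in\pi$ denotes the block containing $\ell$. These constraints are consistent precisely when any two vertices of $[2K]$ sharing a block of $\pi$ are assigned equal indices -- that is, when $\pi$ is a refinement of $\pi_{\mathrm{eq}}$ -- and in that case they pin down $\phi$ uniquely. Writing $\leq$ for the refinement order on $\mcal P(2K)$ (so finer partitions are smaller), this yields
\[
\Tr_{N,T_0^\pi}(E_{i_1 j_1}\otimes\cdots\otimes E_{i_K j_K})=\one[\pi\leq\pi_{\mathrm{eq}}],
\]
or equivalently, as invariant linear forms, $\Tr_{N,T_0^\pi}=\sum_{\rho\geq\pi}\chi_\rho$.

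The final step is Möbius inversion. The system $\Tr_{N,T_0^\pi}=\sum_{\rho\geq\pi}\chi_\rho$ is unitriangular with respect to any linear extension of the refinement order on the finite lattice $\mcal P(2K)$, hence invertible; explicitly, $\chi_\rho=\sum_{\pi\geq\rho}\mu(\rho,\pi)\Tr_{N,T_0^\pi}$ with $\mu$ the Möbius function of $\mcal P(2K)$. Substituting into $\psi_N=\sum_\rho c_{N,\rho}\chi_\rho$ produces the required expansion with $a_{N,\pi}=\sum_{\rho\leq\pi}\mu(\rho,\pi)c_{N,\rho}$. The argument is essentially combinatorial and I do not foresee a serious obstacle; the only mild nuisance is that when $N<2K$, partitions $\rho$ with $|\rho|>N$ are not realized as equality patterns, so the sum must be restricted accordingly. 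This may create linear dependencies among the $\Tr_{N,T_0^\pi}$ but does not affect the spanning statement of the proposition.
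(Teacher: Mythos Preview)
Your proposal is correct and follows essentially the same route as the paper. The paper phrases the first step via the Riesz representation $\psi_N(A)=\Tr[AB^t]$ and then observes that the $\mcal S_N$-invariant matrix $B$ decomposes as $\sum_\pi B_\pi\,\xi_\pi$ with $\xi_\pi(\mbf i,\mbf j)=\one[\ker(\mbf i,\mbf j)=\pi]$, but this is exactly your decomposition $\psi_N=\sum_\rho c_{N,\rho}\chi_\rho$ in different notation (your $\chi_\rho$ is the paper's injective form $\Tr^0_{N,T_0^\rho}$, your $c_{N,\rho}$ is their $B_\rho$); the identity $\Tr_{N,T_0^\pi}=\sum_{\rho\geq\pi}\chi_\rho$ and the subsequent M\"obius inversion are identical to the paper's \eqref{Eq:TrInjTr}--\eqref{Eq:Constant}, and your final formula $a_{N,\pi}=\sum_{\rho\leq\pi}\mu(\rho,\pi)c_{N,\rho}$ matches \eqref{Eq:Constant} verbatim.
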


Proposition \ref{Prop:SnInv 1} is proved in Section \ref{Sec:ProofSnInv 1}

\subsubsection{Control of the Coefficients}

With the $\mcal S_N$-elementary linear forms identified in \eqref{Eq:SnInv 1},
the second main result of this section concerns the control of the coefficients $a_{N,\pi}$ for large $N$.
In order to state this result, we introduce one more graph-theoretic notion.

\begin{definition}\label{Def:Leaves}
Let $T=(V,E,\gamma)$ be a linear graph of order $K$.
\begin{enumerate}
	\item A {\it cutting edge} of a graph is an edge whose removal increases the number of connected components.
	\item A {\it two-edge connected graph} is a connected graph with no cutting edge.
	\item A {\it two-edge connected component} of a graph is a maximal connected sub-graph that is two-edge connected.
	\item The {\it forest of two-edge connected components} of a graph $T$ is the graph $\mcal F(T)$ whose vertices are the two-edge connected components of $T$ and whose edges are the cutting edges of $T$, making links between the components that contain the source and the target of a cutting edge.
	\item A {\it trivial component} of $\mcal F(T)$ is a component consisting of a single vertex. 
\end{enumerate}
We denote by  $\mathfrak L(T)$ the number of leaves in the forest of two-edge connected components $\mcal F(T)$, with the convention that a trivial component has two leaves.
\end{definition}

\begin{figure}[!t]
\centering%
{\includegraphics{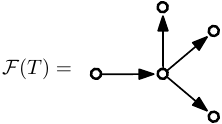}}
\caption{The graph of two-edge connected components of the graph $T_0^\pi$ of Figure \ref{fig2_Quotient}.}
  \label{fig4_TEC}
\end{figure}

The following result, which is proved below in Section \ref{Sec:ProofSnInv 2}, contains our bound on the coefficients $a_{N,\pi}$ that appear in \eqref{Eq:SnInv 1}.

\begin{proposition}\label{Prop:SnInv 2}
For every $\pi\in\mcal P(2K)$, as $N\to\infty$ it holds that
	\begin{align}\label{Eq:SnInv 2}
	a_{N,\pi}=O(N^{-\mathfrak L(T_0^\pi)/2}).
	\end{align}
\end{proposition}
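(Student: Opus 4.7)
The plan is to extract each coefficient $a_{N,\pi}$ from $\psi_N$ by evaluating on rank-one test matrices, invert via Möbius on the partition lattice, and bound the result using the Cauchy--Schwarz inequality for the state $\psi_N$ together with its normalization. A final combinatorial inequality then ties the resulting exponent to $\mathfrak L(T_0^\pi)$.

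First, I consider the test matrix $E_{I,J}:=E_{i_1,j_1}\otimes\cdots\otimes E_{i_K,j_K}\in\mrm M_N(\mbb C)^{\otimes K}$, where $E_{i,j}$ is the standard $N\times N$ elementary matrix. By $\mcal S_N$-invariance, $\psi_N(E_{I,J})$ depends only on the partition $\tau\in\mcal P(2K)$ recording which entries of $(i_1,\ldots,i_K,j_1,\ldots,j_K)$ coincide; denote this common value by $\psi_N^\tau$. Unpacking Definition \ref{Def:SnElement} directly gives $\Tr_{N,T_0^\pi}(E_{I,J})=\mathbf{1}[\pi\le\tau]$ in the refinement order: the sum over $\phi:V^\pi\to[N]$ in \eqref{Eq:SnElement} collapses to a single forced map when $\pi$ refines $\tau$, and vanishes otherwise. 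Substituting into \eqref{Eq:SnInv 1} yields $\psi_N^\tau=\sum_{\pi\le\tau}a_{N,\pi}$, so Möbius inversion on $\mcal P(2K)$ gives
\begin{equation*}
a_{N,\pi}=\sum_{\tau\le\pi}\mu(\tau,\pi)\,\psi_N^\tau,
\end{equation*}
a finite sum of bounded cardinality uniformly in $N$.

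Next, I apply the Cauchy--Schwarz inequality for the state $\psi_N$, using the factorization $E_{I,J}=E_{I,L}\cdot E_{L,J}$ (for any $L$, with $E_{I,L}^{*}=E_{L,I}$), to obtain
\begin{equation*}
|\psi_N^\tau|^2\le\psi_N(E_{I,I})\,\psi_N(E_{J,J}).
\end{equation*}
Set $d_N^\sigma:=\psi_N(E_{I,I})\ge 0$ for any $I\in[N]^K$ of index pattern $\sigma\in\mcal P(K)$ (well defined by invariance). The trace normalization $1=\psi_N(\mathrm{Id})=\sum_I\psi_N(E_{I,I})$, grouped by pattern, becomes $\sum_\sigma N(N-1)\cdots(N-|\sigma|+1)\,d_N^\sigma=1$, so positivity forces $d_N^\sigma=O(N^{-|\sigma|})$. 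Therefore $|\psi_N^\tau|=O(N^{-(|\sigma_I(\tau)|+|\sigma_J(\tau)|)/2})$, where $\sigma_I(\tau),\sigma_J(\tau)\in\mcal P(K)$ denote the restrictions of $\tau$ to $[K]$ and $\{K+1,\ldots,2K\}$. Since $\tau\le\pi$ entails $\sigma_{I,J}(\tau)\le\sigma_{I,J}(\pi)$, every term of the Möbius sum is $O(N^{-(|\sigma_I(\pi)|+|\sigma_J(\pi)|)/2})$, and thus so is $a_{N,\pi}$.

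Finally, I establish the purely combinatorial inequality $|\sigma_I(\pi)|+|\sigma_J(\pi)|\ge\mathfrak L(T_0^\pi)$. Since $|\sigma_I(\pi)|$ counts the vertices of $T_0^\pi$ that are the target of at least one edge, and $|\sigma_J(\pi)|$ those that are the source of at least one edge, their sum equals $|V^\pi|+m$, where $m$ is the number of \emph{mixed} vertices (both source and target of some edge). I verify the local estimate $v(C)+m(C)\ge\mathfrak L(C)$ for each $2$-edge-connected component $C$ of $T_0^\pi$: if $\deg_{\mcal F}(C)\ge 1$, then $\mathfrak L(C)\le 1$ while $v(C)\ge 1$; if $\deg_{\mcal F}(C)=0$ and $v(C)\ge 2$, then $v(C)+m(C)\ge 2=\mathfrak L(C)$; and if $\deg_{\mcal F}(C)=0$ with $v(C)=1$, then the single vertex carries only self-loops and is therefore mixed, so $v(C)+m(C)=2=\mathfrak L(C)$. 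Summing over $C$ closes the argument. The main obstacle in this plan is precisely this combinatorial inequality; once it is in hand, the analytic portion (Möbius inversion plus Cauchy--Schwarz) is routine given the right choice of rank-one test matrices.
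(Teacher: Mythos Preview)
Your argument is correct and takes a genuinely different route from the paper's.

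The paper proves Proposition \ref{Prop:SnInv 2} by first isolating the partial sums $b_{N,\pi}=\sum_{\pi'\le\pi}a_{N,\pi'}$: it constructs random diagonal matrices $D^{(L)},D^{(R)}$ whose expectation kills every injective form except $\Tr^0_{N,T_0^\pi}$ (Lemma \ref{lem:psi^pi}), then invokes the Mingo--Speicher optimal bound (Theorem \ref{Thm:MS}) together with a matching lower bound on $\sup|\Tr^0_{N,T_0^\pi}|$ (Lemma \ref{lem:InjMS}) to deduce $b_{N,\pi}=O(N^{-\mathfrak L(T_0^\pi)/2})$; a final M\"obius inversion plus the monotonicity $\mathfrak L(T_0^\pi)\le\mathfrak L(T_0^{\pi'})$ for $\pi'\le\pi$ (Lemma \ref{Lem:IneqK}) transfers the bound to $a_{N,\pi}$.

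Your approach bypasses the Mingo--Speicher theorem entirely. Evaluating $\psi_N$ on elementary tensors $E_{I,J}$ and M\"obius-inverting gives exactly the formula \eqref{Eq:Constant} of the paper, $a_{N,\pi}=\sum_{\tau\le\pi}\mrm{Mob}(\tau,\pi)\,\psi_N^\tau$, but instead of bounding $\psi_N^\tau$ via graph-trace suprema you apply Cauchy--Schwarz for the state directly to $E_{I,J}=E_{I,L}(E_{J,L})^*$, reducing to the diagonal values $d_N^\sigma=\psi_N(E_{I,I})$; the normalization $\psi_N(\mathrm{Id})=1$ and positivity immediately force $d_N^\sigma=O(N^{-|\sigma|})$. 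This is considerably more elementary. The only nontrivial residue is the combinatorial inequality $|\sigma_I(\pi)|+|\sigma_J(\pi)|\ge\mathfrak L(T_0^\pi)$, and your case split over two-edge-connected components $C$ (with $m(C)$ meaning the number of vertices of $C$ that are mixed in the full graph $T_0^\pi$, so that $\sum_C m(C)=m$) is complete: for $\deg_{\mcal F}(C)\ge1$ one has $\mathfrak L(C)\le1\le v(C)$; for $\deg_{\mcal F}(C)=0$ and $v(C)\ge2$ the bound is immediate; and for $\deg_{\mcal F}(C)=0$, $v(C)=1$, the absence of isolated vertices in $T_0^\pi$ forces a self-loop, so the lone vertex is mixed and $v(C)+m(C)=2$.

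What each approach buys: yours is shorter, self-contained, and in fact yields the sharper exponent $(|\sigma_I(\pi)|+|\sigma_J(\pi)|)/2$, which can exceed $\mathfrak L(T_0^\pi)/2$ (e.g.\ for a long directed path). The paper's approach, while heavier, makes explicit the link with the Mingo--Speicher graph-sum bound, which is conceptually important elsewhere in the paper (it motivates Definition \ref{Def:MSbound}) and explains why $\mathfrak L(T)/2$ is the natural exponent from the viewpoint of general tensor-product test matrices rather than rank-one ones.
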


Before proving Propositions \ref{Prop:SnInv 1} and \ref{Prop:SnInv 2}, we take this opportunity to formulate the technical boundedness assumption on the matrices
$\mbf V_N$ mentioned in the statement of Theorem \ref{Th:Main}, which is a direct consequence of the asymptotic \eqref{Eq:SnInv 2}:

\begin{definition}[Mingo-Speicher Bound]\label{Def:MSbound}
For each $N\geq 1$, let $\mbf A_N = (A^{(N)}_j)_{j\in J}$ be a family of random matrices such that for every $j\in J$, there is an integer $K_j\geq 1$ such that $A^{(N)}_j \in \mrm M_N(\mbb C)^{\otimes K_j}$. We say that the sequence $\mbf A_N $, $N\geq 1$, satisfies the \emph{Mingo-Speicher bound} if for every $n\geq 1$, $j_1\etc j_n\in J$, and linear graph $T$ or order $K=K_{j_1}+ \cdots +K_{j_n}$, there exists a constant $C>0$ independent of $N$ such that
\begin{align*}
\esp\big[ \Tr_{N,T}(  A^{(N)}_{j_1} \otimes \cdots \otimes A^{(N)}_{j_n} ) \big]\leq CN^{\mathfrak L(T)/2}.
\end{align*}
\end{definition}

\begin{remark}
The appellation {\it Mingo-Speicher bound} is inspired by a result of
Mingo and Speicher that we state as Theorem \ref{Thm:MS} in Section \ref{Sec:ProofSnInv 2} below.
\end{remark}

\subsection{Proof of Proposition \ref{Prop:SnInv 1}}\label{Sec:ProofSnInv 1}

\subsubsection{Multi-Index Kernels}

For any integers $i,j=1\etc N$, we denote by $E_{i,j}$ the $(i,j)$-th elementary matrix of $\mrm M_N(\mbb C)$, that is,
	$$E_{i,j}(m,n)=\delta_{i,m} \delta_{j,n},\qquad 1\leq m,n\leq N,$$
where $\delta$ denotes the Kronecker delta function.
A basis for $\mrm M_N(\mbb C)^{\otimes K}$ is given by the tensor products 
	$$E_{\mbf i, \mbf j} = E_{i_1,j_1}\otimes \cdots \otimes E_{i_K,j_K}, \qquad \mbf i, \mbf j\in [N]^K,$$
and an element $B = \sum_{\mbf i, \mbf j\in [N]^K} B(\mbf i, \mbf j)  E_{\mbf i, \mbf j}\in\mrm M_N(\mbb C)^{\otimes K}$ is generically denoted as $B=\big(B(\mbf i, \mbf j)\big)_{\mbf i, \mbf j \in [N]^K}$.

Let $\psi_N : \mrm{M}_N(\mbb C)^{\otimes K} \to \mbb C$ be an arbitrary linear form. We can write $\psi_N$ as a trace against a matrix, namely, for every $A \in \mrm{M}_N(\mbb C)^{\otimes K}$,
	\begin{align}\label{Riesz Representation}
	\psi_N(A) = \Tr\big[ AB^t\big],
	\end{align}
where $B=\sum_{\mbf i, \mbf j} \psi_N(E_{\mbf i, \mbf j})E_{\mbf i, \mbf j}$.
If $\psi_N$ is $\mcal S_N$-invariant, then we can assume that  the matrix $B$ in \eqref{Riesz Representation}
is a $\mcal S_N$-invariant deterministic matrix. Indeed, if $V$ is a random matrix uniformly distributed on $\mcal S_N$,
then by $\mcal S_N$-invariance
	$$\psi_N(A) = \esp_V\big[ \psi_N(V^{\otimes K}A{V^*}^{\otimes K}) \big] =  \esp_V\Big[  \Tr\big[ V^{\otimes K}A{V^*}^{\otimes K}B^t\big] \Big]  =  \Tr\big[ A\tilde B^t \big],$$
where $\tilde B  = \esp_V[ {V^*}^{\otimes K} B V^{\otimes K}]$. Hence we may assume without loss of generality
that $B=\tilde B$. 

In the sequel, we denote pairs of multi-indices $(\mbf i, \mbf j)\in[N]^K\times [N]^K$ as elements of $[N]^{2K}$,
that is,
		$$(\mbf i, \mbf j)=(\underbrace{i_1\etc i_K}_{\mbf i},\underbrace{i_{K+1}\etc i_{2K}}_{\mbf j}).$$
Given $(\mbf i, \mbf j)\in[N]^{2K}$, we use $\ker (\mbf i, \mbf j)\in\mcal P(2K)$
to denote the partition of $[2K]$ determined by the condition
	$$ C \in \ker(\mbf i, \mbf j)\qquad\text{if and only if}\qquad i_k = i_\ell\text{ for every } k,\ell \in C.$$
In words, the blocks of $\ker (\mbf i, \mbf j)$ are the groups of indices for which the associated integers are equal.

\begin{example}\label{Ex:KerI}
We have $\ker (6,1,4,1,6,2,2,2) = \ker(1,2,3,2,1,4,4,4) = \big\{\{ 1,5\}, \{2,4\}, \{3\}, \{6,7,8\} \big\}$
\end{example}

Our purpose for introducing these partitions is the following trivial fact:
For any two pairs of multi-indices $(\mbf i, \mbf j),(\mbf i', \mbf j')\in[N]^{2K}$,
there exists a permutation $\sigma\in\mcal S_N$ such that $\sigma(\mbf i,\mbf j) = (\mbf i',\mbf j')$
if and only if $\ker(\mbf i, \mbf j) =\ker(\mbf i', \mbf j')$
(here, we denote $\sigma(\mbf i, \mbf j) = (\sigma(i_1) \etc \sigma(i_{2K}))$). Since we assume that the
matrix $B$ in \eqref{Riesz Representation} is permutation invariant, then it follows that $B(\mbf i, \mbf j) =B(\mbf i', \mbf j')$
whenever $\ker(\mbf i, \mbf j) =\ker(\mbf i', \mbf j')$. Consequently, if, for every
$\pi\in \mcal P(2K)$, we denote by $B_\pi$ the common value of $B(\mbf i, \mbf j)$ for all $(\mbf i, \mbf j)$ such that $\ker(\mbf i, \mbf j)=\pi$,
and we define the matrix $\xi_\pi\in\mrm M_N(\mbb C)^{\otimes K}$ as
	\eqa\label{Eq:Epi}
		\xi_\pi(\mbf i,\mbf j) =\delta_{\ker(\mbf i, \mbf j), \pi},\qquad\mbf i, \mbf j\in [N]^K,
	\qea
then we get the decomposition
	$$B = \sum_{\pi \in \mcal P(2K)} B_\pi \xi_\pi.$$
Thus for any $A\in\mrm M_N(\mbb C)^{\otimes K}$, one has
	\eqa\label{Eq:PsiE}
		\psi_N(A) =  \sum_{\pi \in \mcal P(2K)} B_\pi  \Tr\big[ A \xi_\pi^t \big].
	\qea

\subsubsection{Injective Linear Forms and M\"obius Inversion}

With \eqref{Eq:PsiE} established, it now remains to prove that each linear map $A \mapsto \Tr\big[ A \xi_\pi^t \big]$
is a linear combination of the $\mcal S_N$-elementary linear forms. For this, we introduce the following
modification of the $\Tr_{N,T}$.

\begin{definition}\label{Def:InjTr}
Let $T=(V,E,\gamma)$ be a linear graph of order $K$.
For every $N\in\mbb N$, we define the {\it injective linear form of order $K$},
denoted $\Tr_{N,T}^0$ as
	\begin{align}\label{Eq:InjTr}
	\Tr_{N,T}^0(A_1\otimes \cdots \otimes A_K)=  \sum_{ \substack{\phi : V\to [N] \\\mrm{injective}}} \prod_{e=(v,w)\in E} A_{\gamma(e)}\big( \phi(w), \phi(v) \big)
	\end{align}
for every $A_1,\ldots,A_K\in\mrm M_N(\mbb C)$.
\end{definition}

The relevance of injective linear forms comes from the following fact:
If $T=(V,E,\gamma)$ is such that $T=T_0^\pi$ for some $\pi\in\mcal P(2K)$, then for every $A_1,\ldots,A_K\in\mrm M_N(\mbb C)$, it holds that
	$$\Tr_{N,T}^0(A_1\otimes \cdots \otimes A_K)=\Tr\big[(A_1\otimes \cdots \otimes A_K)\xi_\pi^t \big]$$
(recall that $\xi_\pi$ is defined in \eqref{Eq:Epi}). To see this, note that, one the one hand,
		$$\Tr\big[(A_1\otimes \cdots \otimes A_K)\xi_\pi^t \big]=\sum_{\ker(\mbf i,\mbf j)=\pi}A_1(i_1,i_{K+1})A_2(i_2,i_{K+2})\cdots A_K(i_K,i_{2K}).$$
On the other hand, if we enumerate the edges
		$$e_1=(v_1,w_1),e_2=(v_2,w_2),\ldots,e_K=(v_K,w_K)$$
of a linear graph $T=(V,E,\gamma)$
in such a way that $\gamma(e_\ell)=\ell$ for every $1\leq\ell\leq K$, then
for any injective map $\phi:V\to [N]$, the multi-index
		$$(\mbf i,\mbf j)=\big(\phi(w_1),\ldots,\phi(w_K),\phi(v_1),\ldots,\phi(v_K)\big)$$
is such that $\ker(\mbf i,\mbf j)=\pi$ if and only if $T=T_0^\pi$.

We now conclude the proof of Proposition \ref{Prop:SnInv 1} by showing that injective linear forms can be written as linear combinations of $\mcal S_N$-elementary linear forms.
Recall that the set $\mcal P(2K)$ of partitions can be endowed with a natural partial order whereby
$\pi\leq \pi'$ if and only if every block of $\pi$ is contained in a block of $\pi'$.
With this in mind, we note
the following comparison between
injective linear forms and $\mcal S_N$-elementary linear forms:

\begin{remark}
Note that \eqref{Eq:InjTr} only differs from \eqref{Eq:SnElement} in the requirement that the map $\phi$ be injective.
If $T=T_0^\pi$ for some $\pi\in\mcal P(2K)$ and $\phi:V\to [N]$ is an arbitrary function (i.e., not necessarily injective), then
the multi-index
		$$(\mbf i,\mbf j)=\big(\phi(w_1),\ldots\phi(w_K),\phi(v_1),\ldots,\phi(v_K)\big)$$
satisfies $\ker(\mbf i,\mbf j)\geq\pi$. In fact, for every $\pi\in\mcal P(2K)$, one has
	\eqa\label{Eq:TrInjTr}
		\Tr_{N,T_0^\pi} = \sum_{\pi'\geq \pi} \Tr_{N,T_0^{\pi'}}^0.
	\qea
\end{remark}

Endowed with its natural order, the poset $\mcal P(2K)$ forms a lattice \cite[Section 3.3]{stan}.
In particular, by the M\"obius inversion formula (dual form) \cite[Proposition 3.7.2]{stan},
\eqref{Eq:TrInjTr} implies that 
	\begin{align}\label{Eq:TrInjTr Mobius} 
	\Tr_{N,T_0^\pi}^0 = \sum_{\pi'\geq \pi}\mrm{Mob}(\pi, \pi') \Tr_{N,T_0^{\pi'}},
	\end{align}
where $\mrm{Mob}$ denotes the M\"obius function on $\mcal P(2K)$ \cite[Section 3.7]{stan}.
If we combine all that was shown in Section \ref{Sec:ProofSnInv 1}, then
we see that
	\begin{multline*}
	\psi_N=\sum_{\pi'\in\mcal P(2K)}B_{\pi'}\Tr^0_{N,T_0^{\pi'}}\\
	=\sum_{\pi'\in\mcal P(2K)}\sum_{\pi\geq\pi'}B_{\pi'}\,\mrm{Mob}(\pi',\pi)\Tr_{N,T_0^{\pi}}
	=\sum_{\pi\in\mcal P(2K)}a_{N,\pi}\,\Tr_{N,T_0^{\pi}},
	\end{multline*}
where
	\begin{align}\label{Eq:Constant}
	a_{N,\pi}=\sum_{\pi'\leq\pi}B_{\pi'}\,\mrm{Mob}(\pi',\pi),
	\end{align}
concluding the proof of Proposition \ref{Prop:SnInv 1}.

\subsection{Proof of Proposition \ref{Prop:SnInv 2}}\label{Sec:ProofSnInv 2}

Since $\psi_N$ is a state, we know that
	\eqa\label{Eq:LocalBound}
		\big| \psi_N(A) \big|\leq\|A\|,\qquad A\in\mrm{M}_N(\mbb C)^{\otimes K}
	\qea
(c.f., \cite[Proposition 3.8]{ns}). Moreover, we recall the following result of Mingo and Speicher.

\begin{theorem}\cite[Theorem 6]{ms}\label{Thm:MS} For any linear graph $T$ of order $K$,
	\eqa\label{Eq:MSbound}
	 	\underset{\substack{ A= A_1 \otimes \cdots \otimes  A_K \\  \mrm{s.t.} \ \|A_k\|= 1, \, \forall k}}{\sup} \big|  \Tr_{N,T}( A)\big| =   {N}^{\mathfrak L(T)/2},
	\qea
with $\mathfrak L(T)$ as in Definition \ref{Def:Leaves}.
\end{theorem}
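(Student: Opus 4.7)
The plan is to establish the equality by proving the upper and lower bounds separately.

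For the \textbf{upper bound}, $|\Tr_{N,T}(A_1 \otimes \cdots \otimes A_K)| \leq N^{\mathfrak L(T)/2}$ when $\|A_k\| = 1$, the argument proceeds structurally on $\mathcal F(T)$. Multiplicativity reduces immediately to connected $T$: if $T = T' \sqcup T''$ then $\Tr_{N,T} = \Tr_{N,T'}\Tr_{N,T''}$ and $\mathfrak L$ is additive across components. For the base case of a two-edge-connected $T$ (where $\mathcal F(T)$ is a single vertex and $\mathfrak L(T) = 2$), an ear decomposition lets one rewrite the tensor contraction $\Tr_{N,T}(A)$ as a single trace $\Tr(B_1 B_2 \cdots B_p)$ of a product of matrices built from the $A_k$ and their transposes; since each $\|A_k\| = 1$, this trace is bounded by $N$. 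For the general inductive step, when $T$ has a cutting edge $e$ with $\gamma(e) = k$, separate $T$ into $T_1 \cup \{e\} \cup T_2$ and apply Cauchy--Schwarz across $e$:
\begin{equation*}
\Tr_{N,T}(A) = \langle f, A_k g\rangle, \qquad |\Tr_{N,T}(A)| \leq \|A_k\|\,\|f\|_2\,\|g\|_2,
\end{equation*}
where $f$ and $g$ are the partial contractions over $T_1$ and $T_2$ with the cut vertex held fixed. The squared norms $\|f\|_2^2, \|g\|_2^2$ are themselves tensor contractions over doubled subgraphs (each $T_i$ glued to a conjugate copy of itself along the cut vertex), to which the inductive hypothesis applies. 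A combinatorial identity relating $\mathfrak L(T)$ to the leaf counts of these doubled subgraphs then yields exactly the required power of $N^{1/2}$.

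For the \textbf{lower bound}, the strategy is to exhibit explicit unit-norm matrices saturating the bound. For every edge lying inside a two-edge-connected component of $T$, take $A_k = I$; the cycles of $\delta$-functions from identities force $\phi$ to be constant on each two-edge-connected component, contributing one free summation of range $N$ per component. For each cutting edge, take a rank-one norm-one matrix, typically $\mathbf{1}\mathbf{1}^T / N$ for internal cutting edges of $\mathcal F(T)$ or $e_1\mathbf{1}^T / \sqrt N$ (or its transpose) for cutting edges on branches leading to leaves, with the exact choice dictated by the orientation of each edge. A direct calculation verifies that each leaf of $\mathcal F(T)$ contributes a factor $N^{1/2}$ beyond the bulk structure. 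As consistency checks: when $T$ is a single cycle the prescription gives $\Tr(I^K) = N = N^{2/2}$; when $T$ is a ``Y''-shaped graph with three two-edge-connected components attached to a central trivial component via three cutting edges, taking $A_k = e_1\mathbf{1}^T/\sqrt N$ on the cutting edges produces $N^{3/2} = N^{\mathfrak L(T)/2}$.

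The \textbf{main obstacle} is the combinatorial bookkeeping in the upper-bound induction: making precise the identity between $\mathfrak L(T)$ and the leaf counts of the doubled subgraphs appearing under Cauchy--Schwarz, so that the accumulated $N^{1/2}$ factors assemble to exactly $N^{\mathfrak L(T)/2}$ without slack. The ear-decomposition argument in the two-edge-connected base case also requires some care, particularly to handle directed graphs whose vertex degrees are not globally balanced, so that the resulting matrix product $B_1 \cdots B_p$ genuinely has operator norm at most $1$.
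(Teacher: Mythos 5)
First, a point of reference: the paper does not prove this statement at all --- it is imported verbatim from Mingo and Speicher \cite[Theorem 6]{ms}, and the only related argument in the text is the proof of Lemma \ref{lem:InjMS}, whose upper bound is \emph{deduced from} Theorem \ref{Thm:MS} and whose lower bound adapts Mingo--Speicher's optimality example. Your overall architecture (multiplicativity over connected components, a two-edge connected base case, Cauchy--Schwarz across cutting edges, explicit near-optimizers) is indeed the architecture of the original proof in \cite{ms}, and your lower-bound construction is close in spirit to the one the authors carry out for the injective variant in Section \ref{Sec:ProofInjMS}.

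The genuine gap is your base case. For a two-edge connected graph that is not a single cycle, $\Tr_{N,T}(A)$ cannot be rewritten as a single trace $\Tr(B_1\cdots B_p)$: in such a trace every summation index appears exactly twice, in a cyclic pattern, whereas already the theta graph (two vertices joined by three edges) has degree-$3$ vertices and yields $\sum_{i,j}A_1(j,i)A_2(j,i)A_3(j,i)$, a Hadamard-type contraction with triple index coincidences that no ear decomposition converts into a matrix product. This two-edge connected case is precisely the hard core of \cite{ms} and needs its own argument (for the theta graph, one Cauchy--Schwarz step plus the entrywise bound $|B(i,j)|\le\|B\|$ gives $\le N\prod_l\|B_l\|$; the general case requires a careful induction of this kind). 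Two smaller issues: the ``combinatorial identity'' matching $\mathfrak L(T)$ to the leaf counts of the doubled subgraphs under Cauchy--Schwarz is exactly where the remaining content lies and is not supplied (gluing $T_1$ to a conjugate copy along the cut vertex merges or destroys leaves, so the bookkeeping is not automatic); and the lower bound, correct in outline, still needs the explicit count of surviving terms of the kind done in Section \ref{Sec:ProofInjMS}. As written, the proposal is a sound plan with its hardest step resolved incorrectly.
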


\begin{remark}\label{rmk:Mingo-Speicher on Tensor Products}
According to \eqref{Eq:MSbound}, any family of tensor products of unitary $N\times N$ matrices
satisfies the Mingo-Speicher bound.
\end{remark}

Note that \eqref{Eq:LocalBound} implies that
	$$\underset{\substack{ A= A_1 \otimes \cdots \otimes  A_K \\  \mrm{s.t.} \ \|A_k\|= 1, \, \forall k}}{\sup} \big|  \psi_N(A)\big| =   1.$$
Combining this fact with the suprema in \eqref{Eq:MSbound} and the expansion in \eqref{Eq:SnInv 1}
suggests that the constants $a_{N,\pi}$ should be of order $N^{-\mathfrak L(T_0^\pi)/2}$.
We can make this heuristic precise with the following three results, which we prove
in Sections \ref{Sec:Proofpsi^pi}--\ref{Sec:ProofInjMS} below.

\begin{lemma}\label{lem:psi^pi}
For every $\pi\in\mcal P(2K)$, there exists a constant $C^{(\pi)}>0$
such that for every $N\in\mbb N$
and $A\in\mrm{M}_N(\mbb C)^{\otimes K}$,
	$$\left|\left(\sum_{\pi'\leq\pi}a_{N,\pi'}\right)\Tr^0_{N,T_0^\pi}(A)\right|\leq C^{(\pi)}\|A\|.$$
\end{lemma}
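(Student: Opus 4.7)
I will rewrite the left-hand side as $\psi_N$ applied to a matrix with operator norm bounded by $C^{(\pi)}\|A\|$ (with $C^{(\pi)}$ independent of $N$), and then invoke the state inequality $|\psi_N(X)|\leq\|X\|$. This proceeds in two stages: first a M\"obius cancellation and reassembly that identifies the left-hand side as $\psi_N$ of an appropriate Schur multiplier of $A$, and then a uniform-in-$N$ contractivity estimate for that Schur multiplier.

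\emph{Reassembly step.} Starting from \eqref{Eq:Constant} and swapping the order of summation,
\[
\sum_{\pi'\leq\pi}a_{N,\pi'}
=\sum_{\pi''\leq\pi}B_{\pi''}\sum_{\pi''\leq\pi'\leq\pi}\mrm{Mob}(\pi'',\pi')
=B_\pi,
\]
since the inner sum equals $\delta_{\pi'',\pi}$ by the defining property of the M\"obius function on $\mcal P(2K)$. From $B(\mbf i,\mbf j)=\psi_N(E_{\mbf i,\mbf j})$ (noted just before \eqref{Eq:Epi}) and $\mcal S_N$-invariance, $B_\pi=\psi_N(E_{\mbf i,\mbf j})$ for any $(\mbf i,\mbf j)$ with $\ker(\mbf i,\mbf j)=\pi$. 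Combining this with the expansion $\Tr^0_{N,T_0^\pi}(A)=\sum_{\ker(\mbf i,\mbf j)=\pi}A(\mbf i,\mbf j)$ (obtained from Definition \ref{Def:InjTr} by reindexing injective maps $\phi$ via $(\mbf i^\phi,\mbf j^\phi)$) and using linearity of $\psi_N$,
\[
\Bigl(\sum_{\pi'\leq\pi}a_{N,\pi'}\Bigr)\Tr^0_{N,T_0^\pi}(A)=\psi_N\bigl(Y_\pi(A)\bigr),\qquad Y_\pi(A):=\sum_{\ker(\mbf i,\mbf j)=\pi}A(\mbf i,\mbf j)\,E_{\mbf i,\mbf j},
\]
where $Y_\pi$ is the Schur multiplier of $A$ by the 0-1 matrix $\xi_\pi$. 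The lemma therefore reduces to showing $\|Y_\pi(A)\|\leq C^{(\pi)}\|A\|$ with $C^{(\pi)}$ independent of $N$.

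\emph{Contractivity step.} Introduce the weaker Schur multipliers $Q_\rho(A)(\mbf i,\mbf j):=A(\mbf i,\mbf j)\,\one_{\ker(\mbf i,\mbf j)\geq\rho}$. The identity $Q_\pi=\sum_{\rho\geq\pi}Y_\rho$ inverts via M\"obius (exactly as in \eqref{Eq:TrInjTr Mobius}) to $Y_\pi=\sum_{\rho\geq\pi}\mrm{Mob}(\pi,\rho)\,Q_\rho$, so it suffices to show $\|Q_\rho(A)\|\leq\|A\|$ for every $\rho\geq\pi$ and then set $C^{(\pi)}:=\sum_{\rho\geq\pi}|\mrm{Mob}(\pi,\rho)|$. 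I would decompose each $Q_\rho$ as a composition of pair-equality Schur multipliers $R_{p,q}$ (for $p\neq q\in[2K]$, each enforcing one identity $\mrm{Ind}_p=\mrm{Ind}_q$ between components of $(\mbf i,\mbf j)$) indexed by a spanning set of pairs within each block of $\rho$. When both positions are source (resp.\ both target), $R_{p,q}A$ equals $PA$ (resp.\ $AP$) for an orthogonal projection $P$, and the bound is immediate; in the mixed case, $R_{p,q}A=\sum_{k=1}^NP_kAQ_k$ with $\{P_k\}_k$ and $\{Q_k\}_k$ mutually orthogonal families of projections summing to $I$, and decomposing any unit vector as $v=\sum_kQ_kv$ yields $\|\sum_kP_kAQ_kv\|^2=\sum_k\|P_kAQ_kv\|^2\leq\|A\|^2\|v\|^2$.

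\emph{Main obstacle.} The crux is the contractivity of $Q_\rho$ in the previous step: generic Schur multipliers by 0-1 matrices on $M_N(\mbb C)$ can have operator norm as large as $\sqrt{N}$, so the uniform bound genuinely depends on the \emph{equality} structure of the support of the relevant indicator, which makes the ranges of the $P_kAQ_k$ mutually orthogonal across $k$ and enables the Pythagorean estimate. Once this is in hand, composing the pair-equality bounds along the spanning set gives $\|Q_\rho(A)\|\leq\|A\|$, hence $\|Y_\pi(A)\|\leq C^{(\pi)}\|A\|$, and finally $|\psi_N(Y_\pi(A))|\leq\|Y_\pi(A)\|\leq C^{(\pi)}\|A\|$, completing the proof.
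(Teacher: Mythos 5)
Your proof is correct, but it takes a genuinely different route from the paper's. The paper constructs random diagonal matrices $D^{(L)},D^{(R)}$ (built from roots of unity and a product trick) so that averaging gives $\esp_D\big[\Tr^0_{N,T_0^{\pi'}}(D^{(L)}AD^{(R)})\big]=\delta_{\pi,\pi'}\Tr^0_{N,T_0^\pi}(A)$; hence $\esp_D\big[\psi_N(D^{(L)}AD^{(R)})\big]=\big(\sum_{\pi'\leq\pi}a_{N,\pi'}\big)\Tr^0_{N,T_0^\pi}(A)$, and the bound follows from the state inequality together with the uniform bound on $\|D^{(L)}AD^{(R)}\|$. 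You instead (i) observe the exact identity $\sum_{\pi'\leq\pi}a_{N,\pi'}=B_\pi=\psi_N(E_{\mbf i,\mbf j})$ by M\"obius cancellation (this is just the inverse of \eqref{Eq:Constant}, and it is consistent with the paper's later use of $b_{N,\pi}$), (ii) identify the left-hand side as $\psi_N$ of the Schur product of $A$ with $\xi_\pi$, and (iii) bound that Schur multiplier uniformly in $N$ by expanding it over the multipliers $Q_\rho$ and factoring each $Q_\rho$ into contractive pair-equality multipliers of the form $PA$, $AP$, or $\sum_k P_kAQ_k$. Conceptually both arguments amount to showing that the Schur multiplier by $\xi_\pi$ is bounded uniformly in $N$ — the paper realizes this as an average over bounded diagonal conjugations (a Haagerup-type factorization), while you use the orthogonal block structure directly — and your warning that generic $0$-$1$ Schur multipliers have norm up to $\sqrt N$ correctly identifies why the equality structure of $\xi_\pi$ is essential. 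Your version buys a cleaner identification of the scalar ($b_{N,\pi}=B_\pi$, i.e.\ a single value of $\psi_N$ on an elementary tensor) and a more transparent statement of what is being bounded; the paper's averaging device is self-contained and performs the $\delta_{\pi,\pi'}$ selection and the norm bound in one stroke. The only point worth adding is the degenerate case $N<|\pi|$, where no $(\mbf i,\mbf j)$ with $\ker(\mbf i,\mbf j)=\pi$ exists; there $\Tr^0_{N,T_0^\pi}(A)=0$ and the claim is trivial.
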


\begin{lemma}\label{Lem:IneqK}
If $\pi'\leq\pi$, then $\mathfrak L(T_0^\pi)\leq\mathfrak L(T_0^{\pi'})$.
\end{lemma}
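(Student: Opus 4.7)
The plan is to prove the inequality in two steps: first reduce to the case of a single vertex identification, and then show that any such identification weakly decreases $\mathfrak L$. Since $\pi' \leq \pi$ means $\pi'$ refines $\pi$, every block of $\pi$ is a union of blocks of $\pi'$, so $T_0^\pi$ is obtained from $T_0^{\pi'}$ by merging supervertices two at a time. It therefore suffices to show the following: if $T' = T/(u \sim v)$ is obtained from a linear graph $T$ by identifying two vertices, then $\mathfrak L(T') \leq \mathfrak L(T)$.

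The main technical device I would use is the counting identity
\[\mathfrak L(T) = 2\, c(\mcal F(T)) + \sum_{v \in \mcal F(T)} \max\bigl(0,\, d_{\mcal F(T)}(v) - 2\bigr),\]
where $c(\mcal F(T))$ is the number of connected components of $\mcal F(T)$; this follows immediately from the handshake identity on each tree, together with the convention that a trivial component contributes $2$ leaves. I would then analyze the identification $T \to T'$ in three cases according to the relative position of $u$ and $v$ in $\mcal F(T)$.

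Case A: $u$ and $v$ lie in the same $2$-edge connected component of $T$. Then one verifies that cutting edges are preserved and the $2$-edge connected components are unchanged up to the identification inside one block, so $\mcal F(T') = \mcal F(T)$ and equality holds. Case B: $u \in C_u$ and $v \in C_v$ are in distinct $2$-edge connected components $C_u = X_0, X_1, \ldots, X_k = C_v$ along the unique path in the same tree of $\mcal F(T)$, joined by cutting edges $e_1, \ldots, e_k$. Identifying $u$ with $v$ creates a cycle through $e_1, \ldots, e_k$; these edges cease to be cutting, while all other cutting edges of $T$ remain cutting (since $u,v$ lie on the same side of each such edge). Hence the $X_i$ merge into a single $2$-edge connected component $Y$ of $T'$ with degree $d(Y) = \sum_{i=0}^{k} d(X_i) - 2k$ in $\mcal F(T')$, while the number of tree-components is unchanged. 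The desired inequality then follows from the pointwise bound
\[\sum_{i=0}^{k} \max\bigl(0, d(X_i) - 2\bigr) \;\geq\; \max\Bigl(0,\, \textstyle\sum_{i=0}^{k} d(X_i) - 2(k+1)\Bigr) \;=\; \max\bigl(0, d(Y) - 2\bigr).\]
Case C: $u$ and $v$ lie in different connected components of $T$. Then cutting edges are preserved, $C_u$ and $C_v$ merge into a single $Y$ of degree $a+b$ (with $a = d(C_u)$, $b = d(C_v)$), and $c(\mcal F)$ drops by $1$. The inequality then reduces to the elementary estimate $2 + \max(0,a-2) + \max(0,b-2) \geq \max(0, a+b-2)$, verified by a short case analysis on whether $a,b$ exceed $2$.

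The main obstacle is purely bookkeeping in Case B, namely checking carefully that the union of the $X_i$'s with the $e_i$'s really is a \emph{maximal} $2$-edge connected subgraph of $T'$, and that no cutting edge outside the path changes status under the identification; once these facts are in place the inequality is immediate from the degree formula.
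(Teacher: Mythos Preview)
Your proof is correct and follows essentially the same strategy as the paper: reduce to a single vertex identification and then argue case by case according to where the two vertices sit in the forest $\mcal F(T)$. The paper's argument is terser---it collapses your Cases B and C into a single line, asserting that when $u$ and $v$ lie in distinct two-edge connected components the new forest is obtained ``by identifying the corresponding vertices,'' and that this can only lower the leaf count. Your treatment is more careful: the degree identity $\mathfrak L(T)=2\,c(\mcal F(T))+\sum_v \max(0,d(v)-2)$ makes the bookkeeping explicit, and your Case~B correctly records that when $u,v$ are in the same tree the entire path $X_0,\ldots,X_k$ collapses in $\mcal F$ (not merely two vertices), which the paper's one-line description elides. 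So the approaches coincide, with yours supplying the details the paper leaves implicit.
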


\begin{lemma}\label{lem:InjMS}
For any $\pi\in\mcal P(2K)$, there are two constants $0<C_\pi<C'_\pi$ such that
for every $N\geq 2K$,
	\eqa\label{eq:InjMS}
	 	   C_\pi {N}^{\mathfrak L(T_0^\pi)/2} \leq \underset{\substack{ A= A_1 \otimes \cdots \otimes  A_K \\  \mrm{s.t.} \ \|A_k\|= 1, \, \forall k}}{\sup} \big|  \Tr^0_{N,T_0^\pi}( A)\big| \leq   C'_\pi {N}^{\mathfrak L(T_0^\pi)/2}.
	\qea
\end{lemma}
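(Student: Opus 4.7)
\textbf{Approach to Lemma \ref{lem:InjMS}.}

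The upper bound is a direct consequence of the Möbius inversion \eqref{Eq:TrInjTr Mobius} combined with Theorem \ref{Thm:MS} and Lemma \ref{Lem:IneqK}. The plan is: for any $A = A_1 \otimes \cdots \otimes A_K$ with $\|A_k\|=1$, expand
\[
|\Tr^0_{N,T_0^\pi}(A)| \leq \sum_{\pi' \geq \pi}|\mrm{Mob}(\pi,\pi')|\cdot|\Tr_{N,T_0^{\pi'}}(A)| \leq \Big(\sum_{\pi' \geq \pi}|\mrm{Mob}(\pi,\pi')|\Big) N^{\mathfrak L(T_0^\pi)/2},
\]
where the second inequality uses $|\Tr_{N,T_0^{\pi'}}(A)|\le N^{\mathfrak L(T_0^{\pi'})/2}\le N^{\mathfrak L(T_0^\pi)/2}$. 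This gives the upper bound with $C'_\pi := \sum_{\pi'\geq\pi}|\mrm{Mob}(\pi,\pi')|$.

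For the lower bound, I would exhibit an explicit norm-one tensor product $A = A_1 \otimes \cdots \otimes A_K$ such that $|\Tr^0_{N,T_0^\pi}(A)|\geq C_\pi N^{\mathfrak L(T_0^\pi)/2}$, adapting the construction that shows tightness in Mingo-Speicher. Using the forest $\mcal F(T_0^\pi)$ of two-edge connected components, the plan is as follows. To each edge inside a two-edge connected component $C$, assign a unitary matrix (for example, a power of a cyclic shift) chosen so that the product of the matrices around every cycle in $C$ is the identity; then the summation over the vertices of $C$ produces a factor $\sim N$ which is supported on configurations where $\phi$ is injective on $C$. To each cutting edge incident to a leaf of $\mcal F(T_0^\pi)$, assign a rank-one matrix of the form $\tfrac{1}{\sqrt{N}}\,e_{i^*}\mathbf 1^{t}$ (norm one), which pins one endpoint and yields a factor $\sqrt{N}$ per leaf. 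To each internal cutting edge of the forest, assign a norm-one matrix (such as $\tfrac{1}{N}\mathbf 1\mathbf 1^{t}$ or a Hadamard-like matrix) that couples two components without inflating the leading order. With this choice, a direct count gives $\Tr_{N,T_0^\pi}(A) \asymp N^{\mathfrak L(T_0^\pi)/2}$, while the pinned indices and free indices per component are compatible with injective $\phi$ for $N$ large: only an $O(1/N)$ fraction of summands violates injectivity, so $\Tr^0_{N,T_0^\pi}(A) = \Tr_{N,T_0^\pi}(A)\bigl(1 + o(1)\bigr)$, and the lower bound follows for $N\geq 2K$ with some constant $C_\pi>0$.

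The main obstacle is that Lemma \ref{Lem:IneqK} guarantees only $\mathfrak L(T_0^{\pi'}) \leq \mathfrak L(T_0^\pi)$ for $\pi' \geq \pi$, not strict inequality. Consequently, the non-injective corrections appearing via \eqref{Eq:TrInjTr} could a priori contribute at the same order as the main term and cancel it. The hard part will therefore be to design the matrices $A_k$ (possibly by incorporating discrete Fourier or Hadamard structure to exploit oscillatory cancellations) so that the terms $\Tr^0_{N,T_0^{\pi'}}(A)$ for $\pi' > \pi$ with $\mathfrak L(T_0^{\pi'}) = \mathfrak L(T_0^\pi)$ either cancel among themselves or are strictly dominated by the $\pi' = \pi$ term, and to make the bookkeeping explicit enough to extract a concrete constant $C_\pi > 0$.
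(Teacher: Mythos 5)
Your upper bound is exactly the paper's argument (M\"obius inversion \eqref{Eq:TrInjTr Mobius} plus Theorem \ref{Thm:MS} plus Lemma \ref{Lem:IneqK}) and is correct. For the lower bound, you have the right template — adapt Mingo and Speicher's optimality example — but you route the estimate through the full trace $\Tr_{N,T_0^\pi}$ and then try to return to $\Tr^0_{N,T_0^\pi}$, and you yourself flag the resulting difficulty without resolving it: since Lemma \ref{Lem:IneqK} permits $\mathfrak L(T_0^{\pi'})=\mathfrak L(T_0^\pi)$ for $\pi'>\pi$, the non-injective corrections in \eqref{Eq:TrInjTr} can genuinely be of the same order as the main term, so your assertion that only an $O(1/N)$ fraction of summands violates injectivity is unjustified, and the ``hard part'' you describe is a real gap in the argument as written.

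The missing idea is that one should never pass through the full trace at all. Choose test matrices with \emph{non-negative entries}; then every summand of $\Tr^0_{N,T_0^\pi}(A)$ is non-negative, so it suffices to exhibit enough injections $\phi$ each contributing a fixed positive amount — no cancellation can occur and no comparison with $\Tr_{N,T_0^\pi}$ is needed. Concretely, the paper classifies the leaves of $\mcal F(T_0^\pi)$ into $L_1$ cutting edges adjacent to one leaf, $L_2$ adjacent to two leaves, and $L_3$ isolated two-edge connected components, so that $\mathfrak L(T_0^\pi)=L_1+2(L_2+L_3)$; it assigns to the $L_1$ edges the rank-one norm-one matrices $A_\ell(i,j)=N^{-1/2}\delta_{j,\pi_0(\ell)}$ and to all remaining edges the block-diagonal norm-one matrix $B=(\mbb J_K)^{\oplus m_N}\oplus 0_{r\times r}$ built from $2K\times 2K$ blocks with all entries $(2K)^{-1}$. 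The blocks confine each leaf or isolated component to a window of $2K$ consecutive indices, and a direct enumeration produces at least $\sim N^{L_1+L_2+L_3}$ admissible injections, each contributing $N^{-L_1/2}(2K)^{-(K-L_1)}$, which yields the claimed $C_\pi N^{\mathfrak L(T_0^\pi)/2}$. Your alternative suggestion of unitary (cyclic-shift) matrices inside the two-edge connected components would also require a separate argument that the resulting constrained maps are injective, which the all-ones-block construction handles automatically because the total number of vertices is at most $2K$.
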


Indeed, if we denote $b_{N,\pi}=\sum_{\pi'\leq\pi}a_{N,\pi'}$, then Lemma \ref{lem:psi^pi}
implies that $|b_{N,\pi}\Tr_{N,T_0^\pi}^0(A)|\leq C^{(\pi)}$ for any matrix $A$ with unit norm.
If we combine this with Lemma \ref{lem:InjMS}, then we conclude that
$b_{N,\pi}=O(N^{-\mathfrak L(T_0^\pi)/2})$. Given the relationship between
the constants $b_{N,\pi}$ and $a_{N,\pi}$, it follows from the M\"obius inversion formula 
\cite[Proposition 3.7.1]{stan} that
	$$a_{N,\pi}=\sum_{\pi'\leq\pi}b_{N,\pi'}\,\mrm{Mob}(\pi',\pi)=O(N^{-\mathfrak L(T_0^\pi)/2}),$$
where the last estimate follows from Lemma \ref{Lem:IneqK}.

\begin{remark}
Using the same argument that we have just provided, if there exists some $\alpha>0$ such that
\[	 	   C_\pi {N}^{\alpha} \leq \underset{\substack{ A\in\mrm{M}_N(\mbb C)^{\otimes K} \\  \mrm{s.t.} \ \|A\|= 1, \, \forall k}}{\sup} \big|  \Tr^0_{N,T_0^\pi}( A)\big| \leq   C'_\pi {N}^{\alpha},\]
then we have that $a_{N,\pi}=O(N^{-\alpha})$.
However,
to the best of the authors' knowledge, the order of the suprema \eqref{Eq:MSbound} and \eqref{eq:InjMS}
over all matrices $A\in\mrm M_N(\mbb C)^{\otimes K}$ of norm one (instead of $A=A_1\otimes\cdots\otimes A_K$)
is unknown. 
\end{remark}

In order to complete the proof of Proposition \ref{Prop:SnInv 2}, it now only
remains to prove Lemmas \ref{lem:psi^pi}--\ref{lem:InjMS}.

\subsubsection{Proof of Lemma \ref{lem:psi^pi}}\label{Sec:Proofpsi^pi}

Let $\pi\in\mcal P(2K)$ be fixed. Suppose that we construct random matrices
$D^{(L)},D^{(R)}\in\mrm{M}_N(\mbb C)^{\otimes K}$ such that
	\begin{align}\label{Eq:psi^pi bounded}
	\sup_{N\in\mbb N}\esp_D\left[\|D^{(L)}D^{(R)}\|\right]\leq C^{(\pi)}
	\end{align}
for some constant $C^{(\pi)}$,
and such that for every $\pi'\in\mcal P(2K)$ and $A\in\mrm{M}_N(\mbb C)^{\otimes K}$, one has
	\begin{align}\label{Eq:psi^pi property}
	\esp_D\left[\Tr^0_{N,T_0^{\pi'}}(D^{(L)}AD^{(R)})\right]=\delta_{\pi,\pi'}\,\Tr^0_{N,T_0^{\pi}}(A),
	\end{align}
where $\esp_D$ denotes the expected value with respect to $D^{(L)}$ and $D^{(R)}$.
Then, by \eqref{Eq:SnInv 1} and \eqref{Eq:TrInjTr},
we see that
	$$\esp_D\left[\psi_N(D^{(L)}AD^{(R)})\right]=\left(\sum_{\pi'\leq\pi}a_{N,\pi'}\right)\Tr^0_{N,T_0^\pi}(A),$$
and thus Lemma \ref{lem:psi^pi} is proved by \eqref{Eq:LocalBound}.

We now construct $D^{(L)}$ and $D^{(R)}$.
Suppose for now that we can write
	$$D^{(L)} = D_1 \otimes \cdots \otimes D_K
	\qquad\text{and}\qquad
	D^{(R)} = D_{K+1} \otimes \cdots \otimes D_{2K},$$
where the $D_\ell\in\mrm M_N(\mbb C)$ are diagonal.
Then, for every linear graph $T=(V,E,\gamma)$ of order $K$ and matrix $A=A_1\otimes\cdots\otimes A_K$, it holds that
	\begin{multline*}
	\esp_D\left[\Tr^0_{N,T}(D^{(L)}AD^{(R)})\right]\\
	=\sum_{ \substack{\phi : V\to [N] \\\mrm{injective}}}
	\esp_D\left[\prod_{e=(v,w)\in E}D_{\gamma(e)}\big( \phi(w), \phi(w) \big)D_{K+\gamma(e)}\big( \phi(v), \phi(v) \big)\right]\\
	\times\prod_{e=(v,w)\in E} A_{\gamma(e)}\big( \phi(w), \phi(v) \big).
	\end{multline*}
We enumerate the edges of $T$ as $e_1=(v_{K+1},v_1),\ldots,e_K=(v_{2K},v_K)$
in such a way that $\gamma(e_k)=k$ for each $1\leq k\leq K$. For every injective map $\phi$, the partition $\ker\big(\phi(v_{1}),\ldots,\phi(v_{2K})\big)$ of  $[2K]$ does not depend on $\phi$ and is denoted $\pi$: two integer $\ell$ and $\ell'$   are in a same block of $\pi$ whenever $v_\ell = v_{\ell'}$. Then we can write
	\begin{multline}\label{Eq:Proofpsi^pi 1}
	\esp_D\left[\prod_{e=(v,w)\in E}D_{\gamma(e)}\big( \phi(w), \phi(w) \big)D_{K+\gamma(e)}\big( \phi(v), \phi(v) \big)\right]\\
	=\esp_D\left[\prod_{C\in\pi}\prod_{\ell\in C}D_\ell\big( \phi(v_\ell), \phi(v_\ell) \big)\right].
	\end{multline}
and this quantity is independent of the choice of injective $\phi$. 
Our objective is to define the matrices $D_\ell$ in such a way that if $T=T_0^{\pi'}$, then
\eqref{Eq:Proofpsi^pi 1} is equal to $\delta_{\pi,\pi'}$.
We need two ingredients to make this construction.

Firstly, for every block $C\in\pi$, we define $\tilde D_C\in\mrm M_N(\mbb C)$ as a diagonal
matrix whose diagonal entries are i.i.d. random variables sampled according to the
uniform measure on the complex roots of unity of order $|C|$. In particular, for every $i\in[N]$ and $n\in\mbb N$,
	\begin{align}\label{Eq:roots moments}
	\esp\left[\tilde D_C(i,i)^n\right]=
	\begin{cases}
	1&\text{if $n$ is a multiple of $|C|$, and}\\
	0&\text{otherwise.}
	\end{cases}
	\end{align}
Furthermore, we assume that
the matrices $(\tilde D_C)_{C\in\pi}$ are independent of each other.

Secondly, for every block $C\in\pi$, we define $\bar D_C\in\mrm M_N(\mbb C)$ as a diagonal
matrix whose diagonal entries are random variables satisfying the following conditions:
\begin{enumerate}
\item For every $i\in[N]$, it holds that
	\begin{align}\label{Eq:Products moments}
	\esp[\bar D_C(i,i)^{|C|}]=1\qquad\text{for every }C\in\pi,
	\end{align}
and if two blocks $C,C'\in\pi$ are distinct, then $\bar D_C(i,i)\bar D_{C'}(i,i)=0$.
\item The collections $\big(\bar D_C(i,i)\big)_{C\in\pi}$ are independent of each
other for different values of $i\in[N]$.
\item $\sup_{N\in\mbb N}\big(\sup_{C\in\pi,\,i\in[N]}\bar D_C(i,i)\big)<\infty$.
\end{enumerate}
The existence of such variables is proved in Example \ref{Ex:ProductVariables} below.
We also assume that the matrices $(\bar D_C)_{C\in\pi}$ are independent of $(\tilde D_C)_{C\in\pi}$.

With these definitions in mind, for every
$\ell\in[2K]$, we define the diagonal matrix $D_\ell=\bar D_{C_\ell}\tilde D_{C_\ell}$, where $C_\ell\in\pi$ denotes the block
that contains $\ell$. On the one hand, since the entries of $D^{(L)}$ and $D^{(R)}$ are uniformly bounded in $N$,
it is clear that \eqref{Eq:psi^pi bounded} holds true.
On the other hand, \eqref{Eq:Proofpsi^pi 1} is now equal to
	\begin{align}\label{Eq:Proofpsi^pi 2}
	\esp\left[\prod_{C\in\pi}\prod_{\ell\in C}\bar D_C\big( \phi(v_\ell), \phi(v_\ell) \big)\tilde D_C\big( \phi(v_\ell), \phi(v_\ell) \big)\right].
	\end{align}
If there exists distinct blocks $C,C'\in\pi$ and $\ell\in C,\,\ell'\in C'$ such that $v_\ell=v=v_{\ell'}$,
then the expectation in \eqref{Eq:Proofpsi^pi 2} contains the product
	$$\bar D_C\big(\phi(v),\phi(v)\big)\bar D_{C'}\big(\phi(v),\phi(v)\big),$$
and thus is equal to zero. Otherwise, if the fact that $\ell$ and $\ell'$ are in distinct blocks of $\pi$ implies that $v_\ell\neq v_{\ell'}$
(and thus $\phi(v_\ell)\neq\phi(v_{\ell'})$ since $\phi$ is injective),
then by the independence assumptions on $\bar D_C$ and $\tilde D_C$ we can simplify \eqref{Eq:Proofpsi^pi 2} to
	\begin{align*}
	\prod_{C\in\pi}\esp\left[\prod_{\ell\in C}\bar D_C\big( \phi(v_\ell), \phi(v_\ell) \big)\right]
	\esp\left[\prod_{\ell\in C}\tilde D_C\big( \phi(v_\ell), \phi(v_\ell) \big)\right].
	\end{align*}
According to \eqref{Eq:roots moments} and \eqref{Eq:Products moments},
this expression is one if $v_\ell=v_{\ell'}$ whenever $\ell$ and $\ell'$ are in the same block of $\pi$,
and zero otherwise. In summary, \eqref{Eq:Proofpsi^pi 1} is equal to one if $T=T_0^{\pi}$ and zero
otherwise, concluding the proof.

\begin{example}\label{Ex:ProductVariables}
Let $n\in\mbb N$, and let $X_1,\ldots,X_n$ be i.i.d. uniform
random variables on $\{0,2\}$. Next, for every $i\in[n]$, let
	$$Y_{i,j}=\begin{cases}
	X_i&\text{if $j=0$, and}\\
	(2-X_i)&\text{if }j=1.
	\end{cases}$$
Then, for every binary sequence $\mbf b=(b_1,\ldots,b_n)\in\{0,1\}^n$, we let
	$$Z_\mbf b=\left(\prod_{i=1}^nY_{i,b_i}\right)^{1/f(\mbf b)},$$
where $f:\{0,1\}^n\to(0,\infty]$ is some function.
By independence,
	$$\esp[Z_\mbf b^{f(\mbf b)}]=\prod_{i=1}^n\esp[Y_{i,b_i}]=1$$
for every $\mbf b$. Moreover, if $\mbf b,\mbf b'\in\{0,1\}^n$ are distinct,
which means that $b_i\neq b'_i$ for some $i\in[n]$, then the product
$Z_\mbf b Z_{\mbf b'}$ contains the factor
	$$Y_{i,b_i}^{1/f(\mbf b)}Y_{i,b_i'}^{1/f(\mbf b')}=X_i^{1/f(\mbf b)}(2-X_i)^{1/f(\mbf b')}
	\quad\text{or}\quad (2-X_i)^{1/f(\mbf b)}X_i^{1/f(\mbf b')},$$
whence it is zero.
\end{example}

\subsubsection{Proof of Lemma \ref{Lem:IneqK}}\label{Sec:ProofIneqK}

Given that $\pi'\leq\pi$, there exists a sequence of partitions
$\pi'=\pi_1\leq\pi_2\leq\cdots\leq\pi_n=\pi$ such that for each
$1\leq i\leq n-1$, $\pi_{i+1}$ is obtained from $\pi_i$ by joining
two blocks of $\pi_i$ into one. At the level of linear graphs,
this corresponds to a sequence $T_0^{\pi_1},T_0^{\pi_2},\ldots,T_0^{\pi_n}$
where each $T_0^{\pi_{i+1}}$ is obtained from $T_0^{\pi_i}$
by identifying two vertices in the latter.

On the one hand, if the two 
vertices that are joined together in $T_0^{\pi_i}$
are in the same two-edge connected component, then $\mcal F(T_0^{\pi_{i+1}})=\mcal F(T_0^{\pi_i})$
(i.e., the forest of two-edge connected components is unaffected by this operation). On the other hand,
if we identify two distinct two-edge connected components, then the forest $\mcal F(T_0^{\pi_{i+1}})$ can be obtained
from $\mcal F(T_0^{\pi_i})$ by identifying the corresponding vertices. Since this process can only decrease
the number of leaves, we conclude that $\mathfrak L(T_0^{\pi_i})\geq\mathfrak L(T_0^{\pi_{i+1}})$.

\begin{remark}\label{Rem:quotient graph inequality}
By using the same argument presented here, it is easy to see that for general linear graphs
$T$ and $T'$ (which may contain trivial components, unlike quotients of $T_0$),
if $T$ is a quotient of $T'$ then $\mathfrak L(T) \leq \mathfrak L(T')$.
\end{remark}

\subsubsection{Proof of Lemma \ref{lem:InjMS}}\label{Sec:ProofInjMS}

The upper bound is a direct consequence of Theorem \ref{Thm:MS}, equation \eqref{Eq:TrInjTr Mobius},
and Lemma \ref{Lem:IneqK}. To prove the lower bound, we present an adaptation of the
example of optimality presented by Mingo and Speicher in \cite{ms} for their proof of 
Theorem \ref{Thm:MS} (see Example 7 and Section 5 therein).

Let $\pi\in\mcal P(2K)$. We want to find matrices $A_1,\ldots,A_K\in\mrm M_N(\mbb C)$ of
unit norm such that $\Tr^0_{N,T_0^\pi}(A_1\otimes\cdots\otimes A_K)$
is of order $N^{\mathfrak L(T_0^\pi)/2}$ for large $N$.
Suppose that $T_0^\pi$ satisfies the following:
\begin{itemize}
\item There are $L_1$ cutting edges adjacent to only one leaf in $\mcal F(T_0^\pi)$.
\item There are $L_2$ cutting edges adjacent to two leaves in $\mcal F(T_0^\pi)$.
\item There are $L_3$ isolated two-edge connected components (i.e., not connected to a cutting edge).
We denote the vertex sets of these connected components as $C_1,C_2,\ldots,C_{L_3}\subset V_0^\pi$.
\end{itemize}
By Definition \ref{Def:Leaves}, it is easily seen that $\mathfrak L(T_0^\pi)=L_1+2(L_2+L_3)$.

If we denote by $e_k=(v_k,w_k)$ the $k$-th edge of $T_0^\pi$ for every $k\in[K]$,
then up to permuting the order of the matrices $A_k$ in the tensor product $A_1\otimes\cdots\otimes A_K$,
or replacing some $A_k$'s by their transposes $A_k^t$, we may assume that the following holds:
\begin{itemize}
\item The cutting edges adjacent to one leaf are
	$e_1,\ldots,e_{L_1}$, and the cutting edges adjacent to two leaves are $e_{L_1+1},\ldots,e_{L_1+L_2}$.
\item For every $\ell\in[L_1]$, the target of $e_\ell$ (i.e., $w_\ell$) belongs to a leaf.
\end{itemize}

Let $\pi_0=\ker( v_1\etc v_{L_1})$ be the partition of $[L_1]$ (defined as above Example \ref{Ex:KerI}) such that $i \sim_{\pi_0} j$  if and only if $v_i=v_j$. We enumerate the blocks of $\pi_0$ from $1$ to $|\pi_0|$, and we use $\pi_0(\ell)$ to denote the number of the block containing $v_\ell$. For any $\ell=1\etc L_1$, let us define the matrix
	$$A_\ell(i,j)=N^{-1/2}\,\delta_{j,\pi_0(\ell)},\qquad i,j=1\etc N.$$
	
Let $\mbb J_K$ be the $2K\times 2K$ matrix whose entries are all $\frac 1{2K}$, let
	$$N=m_N2K+r,\qquad m_N\in\mbb N,~0\leq r<2K$$
be the Euclidean division of $N$ by $2K$, and let
	$$B=(\mbb J_K)^{\oplus m_N}\oplus 0_{r\times r},$$
where $0_{r\times r}$ denotes the $r\times r$ zero matrix
(so long as $N\geq 2K$, this can be defined without problem).

Finally, we define the matrix $A = A_1 \otimes \cdots \otimes A_{L_1}\otimes B^{\otimes {K-L_1}}$. It is easy to see that
$A_1,\ldots,A_{L_1}$ and $B$ all have unit norm. Moreover, 
	\begin{multline*}
	\Tr^0_{N,T_0^\pi}(A)
	 = N^{-L_1/2}\left(\frac{1}{2K}\right)^{K-L_1}\sum_{ \substack{\phi:V_0^\pi \to [N]  \\ \mrm{injective}  }} \prod_{\ell=1}^{L_1} \delta_{\phi(v_\ell),\pi_0(\ell)}\\
	\times \prod_{k=L_1+1}^K\one_{\{m2K+1\leq\phi(w_k),\phi(v_k)\leq(m+1)2K\text{ for some }0\leq m\leq m_N\}},
	\end{multline*}
where $\one$ denotes the indicator function. Thus, it suffices to prove that the number of
injections $\phi:V_0^\pi \to [N]$ such that
	\begin{align}\label{eq:explicit lower bound}
	\prod_{\ell=1}^{L_1} \delta_{\phi(v_\ell),\pi_0(\ell)}
	\prod_{k=L_1+1}^K\one_{\{m2K+1\leq\phi(w_k),\phi(v_k)\leq(m+1)2K\text{ for some }0\leq m\leq m_N\}}=1
	\end{align}
is at least of order $N^{L_1+L_2+L_3}$. In order to see this, we propose to define such injections $\phi$ by using the following procedure.
\begin{enumerate}
\item For every $1\leq\ell\leq L_1$, let $\phi(v_\ell)=\pi_0(\ell)$.
\item Make an arbitrary choice of vertices $\tilde v_1\in C_1,\tilde v_2\in C_2,\ldots,\tilde v_{L_3}\in C_{L_3}$
in the isolated connected components of $T_0^\pi$.
\item Make an arbitrary choice for the values
	\begin{multline*}
	\phi(w_1),\ldots,\phi(w_{L_1}),\phi(w_{L_1+1}),\ldots\\\ldots,\phi(w_{L_1+L_2}),\phi(\tilde v_1),\ldots,\phi(\tilde v_{L_3})\in\big[|\pi_0|+1,N-r\big],
	\end{multline*}
except for the requirement that the values all be distinct.
\item Let $1\leq \ell\leq L_1$, and let $m\leq m_N$ be the integer such that $m2K+1\leq\phi(w_\ell)\leq (m+1)2K$.
For every vertex $v\neq w_\ell$ in the leaf that the edge $e_\ell$ is pointing to, choose $m2K+1\leq\phi(v)\leq(m+1)2K$.
\item Let $L_1+1\leq\ell\leq L_1+L_2$, and let $m\leq m_N$ be the integer such that $m2K+1\leq\phi(w_\ell)\leq (m+1)2K$.
For every vertex $v\neq w_\ell$ in one of the two leaves that $e_\ell$ is connected to, choose $m2K+1\leq\phi(v)\leq(m+1)2K$.
\item Let $1\leq\ell\leq L_3$, and let $m\leq m_N$ be the integer such that $m2K+1\leq\phi(\tilde v_\ell)\leq (m+1)2K$.
For every $v\in C_\ell\setminus\{\tilde v_\ell\}$, choose $m2K+1\leq\phi(v)\leq (m+1)2K$.
\item Finally, for every vertex $v$ for which $\phi$ has not yet been defined, choose $1\leq\phi(v)\leq 2K$.
\end{enumerate}
Clearly, any injective $\phi$ constructed according to those conditions satisfies \eqref{eq:explicit lower bound}.
Since $T_0^\pi$ is a quotient of the minimal graph $T_0$, the total number of vertices is at most $2K$. Thus, for any choice made in steps (1)--(3), there is always
at least one way to select the values of $\phi$ in such a way that steps (4)--(7) are also satisfied. Since there are
	$$\frac{(N-r-|\pi_0|)!}{(N-r-|\pi_0|-L_1-L_2-L_3)!}\sim N^{L_1+L_2+L_3}$$
ways of selecting the values of $\phi$ in step (3), the result is proved.

\begin{remark}
As before, the argument presented here can easily be generalized to an
arbitrary linear graph $T$ possibly containing trivial components, giving the statement
	$$\underset{\substack{ A= A_1 \otimes \cdots \otimes  A_K \\  \mrm{s.t.} \ \|A_k\|= 1, \, \forall k}}{\sup}
	\big|  \Tr^0_{N,T}( A)\big| \asymp N^{\mathfrak L(T)/2}$$
for large $N$.
\end{remark}

\section{Proof of Theorem \ref{Th:Main}}\label{Sec:ProofTh:Main}

\subsection{Proof Overview Part 2}\label{Sec:ProofTh:MainOverview 2}

As per Definitions \ref{def: Haar system} and \ref{def: convergences}, we aim to prove that for every nontrivial $^*$-monomial $M$, one has
	$$\mbb E\big[\psi_N\big(M(\mbf W_N)\big)\big]=o(1)$$
as $N\to\infty$, where we recall that $\mbf W_N$ is the collection of matrices
	$$W_\ell^{(N)}={U_\ell^{(N)}}^{\otimes K_1}\otimes  {U_\ell^{(N)t}}^{\otimes K_2} \otimes V_\ell\toN,\qquad \ell\in[L].$$

\begin{remark}
Recall that we call the $^*$-monomial $M$ trivial if $M(\mbf u)=1$
for every family $\mbf u$ of unitary operators, and nontrivial otherwise.
\end{remark}

\begin{remark}\label{Rmk: WN Invariance}
The family of random unitary matrices
	$$({U_\ell^{(N)}}^{\otimes K_1}\otimes  {U_\ell^{(N)t}}^{\otimes K_2} )_{\ell=1\etc L}$$
is $\mcal O_N$-invariant \cite[Lemma 15]{ms}.
Therefore, since $\mbf U_N$ and $\mbf V_N$ are independent, if $\mbf V_N$ is $\mcal S_N$-invariant, then so is $\mbf W_N$.
As per Remark \ref{Rmk: Invariance Duality},
throughout our proof of \eqref{MainTask1}, we assume without loss of generality that
$\psi_N$ and $\mbf W_N$ are both $\mcal S_N$-invariant.
\end{remark}
	
Thanks to Propositions \ref{Prop:SnInv 1} and \ref{Prop:SnInv 2}, it suffices to show that for every
linear graph $T$ of order $K$ that is a quotient of $T_0$, one has
	\begin{align}\label{MainTask1}
	{N}^{-\mathfrak L(T)/2}\,\esp\big[\Tr_{N,T}\big(M(\mbf W_N)\big)\big]=o(1).
	\end{align}
Our method of proof for this result, which we outline in the next few paragraphs, makes significant use of ideas from
traffic probability (c.f., \cite{ACDGM21,cdm,malebook}).

The first step for the proof of \eqref{MainTask1} consists of a {\it linearization} procedure
that exhibits a linear graph $T_M$ and a random matrix $A_M$ such that
	\begin{align}\label{eq:linearization}
	N^{-\mathfrak L(T)/2}\,\esp\big[\Tr_{N,T}\big(M(\mbf W_N)\big)\big]=N^{-\mathfrak L(T_M)/2}\,\esp\big[\Tr_{N,T_M}\big(A_M\big)\big],
	\end{align}
where $A_M$ is a tensor product of the matrices in $\mbf U_N$ and $\mbf V_N$.
We note that this linearization procedure already appears in \cite[Definition 1.7]{malebook}.
However, since our proof depends on several specific details of the construction of $T_M$ and $A_M$,
we provide a complete description of the linearization in Section \ref{Sec:Linearization} (see Definitions \ref{def:Linearized Graph} and \ref{def:linearized matrix}).

The second step consists of isolating the contributions of the families $\mbf U_N$ and $\mbf V_N$ to the expression on the right-hand side of \eqref{eq:linearization}
(see Lemma \ref{lem:splitting} and \eqref{Eq:Pgrm}). Our main tool for this is a {\it splitting lemma} that appears in \cite{malebook}.
As it turns out, the contribution of $\mbf V_N$ can be controlled thanks to the
Mingo-Speicher bound assumption (Definition \ref{Def:MSbound}), and the contribution of $\mbf U_N$ can be reduced to the asymptotic
analysis of the injective trace of tensor products of i.i.d. Haar unitary random matrices (see \eqref{eq:MainTask1 Final}).

The third and final step in the proof of \eqref{MainTask1} consists of showing that, due to the special structure
of the graph $T_M$ (which depends on the $^*$-monomial $M$), the contributions of $\mbf U_N$ to \eqref{eq:linearization}
must vanish in the large $N$ limit. This part of our argument makes crucial use of a precise asymptotic for the injective trace
of Haar unitary matrices from \cite{cdm} (see Proposition \ref{Prop:LimHaar}).

We now proceed to the proof of \eqref{MainTask1}.

\subsection{Proof of \eqref{MainTask1}}

\subsubsection{Linearization}\label{Sec:Linearization}

Let $T=(V,E,\gamma)$ be a linear graph of order $K$, and let
$M\in\mbb C\langle X_\ell,X_\ell^*\rangle_{\ell\in[L]}$ be a nontrivial $^*$-monomial,
which we write as
	\begin{align}\label{eq: word M}
	M(\mbf X)=X_{\delta(1)}^{\eps(1)} \cdots X_{\delta(p)}^{\eps(p)}
	\end{align}
for some $p\in\mbb N$, $\delta(1),\ldots,\delta(p)\in[L]$, and $\eps(1),\ldots,\eps(p)\in\{1,*\}$.

\begin{definition}[Linearized Graph]\label{def:Linearized Graph}
As usual, let us enumerate $T$'s edges as
	$$e_1=(v_1,w_1),e_2=(v_2,w_2),\ldots,e_K=(v_K,w_K),$$
with the convention that $\gamma(e_k)=k$.
We define $T_M=(V_M,E_M,\gamma_M)$ from $T$ by replacing each edge
	$$e_k= \underset{w_k}\cdot \overset{k}  \leftarrow  \underset{v_k}\cdot\ ,\qquad k\in [K]$$
by the following sequence of $p$ edges (with $p-1$ new vertices):
	\begin{numcases}{\mathfrak p_k=}
	\underset{w_k}\cdot \overset{ (k,1)}  \leftarrow \cdot \cdots \cdot  \overset{(k,p)}\leftarrow \underset{v_k}\cdot &if $k\leq K_1$ or $k> K_1+K_2$,\label{path1}\\
	\underset{w_k}\cdot \overset{(k,1)}  \rightarrow \cdot \cdots \cdot  \overset{(k,p)}\rightarrow \underset{v_k}\cdot &if $k\in [K_1+1,K_1+K_2]$.\label{path2}
	\end{numcases}
Thus the vertex set $V_M$ consists of the vertices of $T$, with an additional $p-1$ new vertices for each $e_k$. The edges are denoted $e_{k,i}$, where $k\in[K]$ and $i\in [p]$, so that $\gamma_M(e_{k,i}) = (k,i)$, as illustrated in \eqref{path1} and \eqref{path2}.  We take the alphabetical order on the set of pairs $ (\ell,i)$, i.e. $(\ell,i)< (\ell',i')$ if and only if either $\ell< \ell'$, or $\ell =\ell'$ and $i<i'$.
\end{definition}

\begin{definition}[Linearized Matrix]\label{def:linearized matrix}
Let us denote $\tilde K_1=(K_1+K_2)p$ and $\tilde K_2= K_3p$.
Define the matrices $B_1\in\mrm M_N(\mbb C)^{\otimes\tilde K_1}$ and
$B_2\in\mrm M_N(\mbb C)^{\otimes\tilde K_2}$ as
		$$B_1 = \Bigg( \Big(U_{\delta(1)}^{(N)}\Big)^{\eps(1)} \otimes
		\cdots \otimes \Big(U_{\delta(p)}^{(N)}\Big)^{\eps(p)} \Bigg)^{\otimes (K_1+K_2)}$$
and
		$$B_2 = \Big(V_{\delta(1)}^{(N)}\Big)^{\eps(1)} \otimes \cdots \otimes \Big(V_{\delta(p)}^{(N)}\Big)^{\eps(p)}.$$
We define the matrix $A_M=B_1\otimes B_2$.
\end{definition}

We note that, by definition of
$\Tr_{N,T}$ (i.e., \eqref{Eq:SnElement}), the edges $e_1,\ldots,e_{K_1}$ in $T$ are associated with the matrices $U^{(N)}_\ell$,
the edges $e_{K_1+1},\ldots,e_{K_1+K_2}$ are associated with the matrices $U^{(N)t}_\ell$, and $e_{K_1+K_2+1},\ldots, e_K$
are associated with the $V^{(N)}_\ell$. Thus,
by comparing the definition of the $^*$-monomial $M$ with the matrices $B_1$ and $B_2$ in the
above definition, it is clear that \eqref{eq:linearization} holds.
We refer to the passage following \cite[Definition 1.7]{malebook} for more details.

\begin{remark}\label{rmk:Linearization and leaves}
It can be noted that $T$ and $T_M$ have the same forest of two-edge connected components,
up to replacing every cutting edge by a sequence of $p$ consecutive cutting edges.
In particular, $\mathfrak L(T)=\mathfrak L(T_M)$.
\end{remark}

\subsubsection{Reduction via Injective Traces and Splitting Lemma}\label{Sec:SplitLem}

Given two linear graphs $\tilde T$ and $T'$, we denote $T'\geq\tilde T$ if
$T'$ is a quotient of $\tilde T$. According to \eqref{Eq:TrInjTr},
	\eqa\label{Eq:ProofSec1}
	\esp\big[\Tr_{N,T_M}\big(A_M \big)\big] = \sum_{T'\geq T_M}   \esp\big[\Tr^0_{N,T'}\big(A_M \big)\big].
	\qea
From this point on until the end of the proof of \eqref{MainTask1}, we fix a linear graph $T'=(E',V',\gamma')$
such that $T'\geq T_M$.

By Remarks \ref{Rem:quotient graph inequality} and \ref{rmk:Linearization and leaves},
we see that $N^{-\mathfrak L(T)/2}=N^{-\mathfrak L(T_M)/2}\leq N^{-\mathfrak L(T')/2}$. Consequently, by \eqref{Eq:ProofSec1},
in order to prove \eqref{MainTask1} it suffices to establish that
	\begin{align}
	N^{-\mathfrak L(T')/2}\,\esp\big[\Tr^0_{N,T'}\big(A_M \big)\big]=o(1).
	\end{align}
In order to do so, we must understand the contributions of the families $\mbf U_N$
and $\mbf V_N$ to $\Tr^0_{N,T'}\big(A_M \big)$.

\begin{definition}\label{def:graph splitting}
For $j=1,2$, let us denote by $T_j$ the linear graph obtained from $T'$ by:
\begin{itemize}
	\item considering only the edges numbered $(k,i)$ for $k=1\etc \tilde K_1$ and $i=1\etc p$ for $T_1$,
	\item considering only the edges numbered $(k,i)$ for $k=\tilde K_1+1 \etc \tilde K_1 + \tilde K_2$ and $i=1\etc p$ for $T_2$,
\end{itemize}
and deleting all other edges. Hence $T_1$ is of order $\tilde K_1$, whereas $T_2$ is of order $\tilde K_2$. Note that $V'$, the vertex set of $T'$, is also the vertex set of $T_1$ and $T_2$. 
\end{definition}

The following result, which is a direct application of \cite[Lemma 2.21]{malebook},
splits the term $\esp\big[\Tr^0_{N,T'}\big( A_M \big)\big]$ into two injective traces
involving the matrices in $\mbf U_N$ and $\mbf V_N$ separately.

\begin{lemma}\label{lem:splitting}
With the notation of Definitions \ref{def:linearized matrix}
and \ref{def:graph splitting}, we have that
\begin{multline}
	 \lefteqn{\esp\big[\Tr^0_{N,T'}\big( A_M \big)\big]}\label{Eq:LemSplit}\\
	   = 
	     \frac{   \,  (N-|V'| )!} {  \, N!}    \times \esp\big[\Tr^0_{N,T_1}\big( B_1 \big)\big]  \, \times \,  \esp\big[\Tr^0_{N,T_2}\big( B_2 \big)\big].
\end{multline}
\end{lemma}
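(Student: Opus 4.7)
The plan is to unfold the injective trace on the left-hand side via Definition \ref{Def:InjTr}, exploit the independence of $\mathbf U_N$ and $\mathbf V_N$ to factor the expectation, and then use permutation invariance to reduce both sides to a counting statement.

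First, I would expand $\Tr^0_{N,T'}(A_M)$ as a sum over injective maps $\phi: V' \to [N]$ of products of entries of $A_M = B_1 \otimes B_2$. By construction, the first $\tilde K_1$ tensor factors of $A_M$ correspond to the edges of $T_1$ and contain only entries of $B_1$, while the remaining $\tilde K_2$ factors correspond to the edges of $T_2$ and contain only entries of $B_2$. Since $\mathbf U_N$ and $\mathbf V_N$ are independent, each summand's expectation factors as a product of two expectations, one involving only the edge-contributions from $T_1$ and the other only those from $T_2$.

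The key observation is that each of the two factors depends on $\phi$ only through the fact that $\phi$ is injective. For $B_1$ this is immediate from the bi-invariance of Haar measure: conjugating each $U_\ell^{(N)}$ by a permutation matrix $P$ preserves its law, and since $P^t=P^{-1}$ this also preserves the law of each $U_\ell^{(N)t}$; hence $B_1$ is invariant in distribution under conjugation by $P^{\otimes \tilde K_1}$. For $B_2$, Remark \ref{Rmk: WN Invariance} permits us to assume $\mathbf V_N$ is $\mathcal S_N$-invariant, and the same reasoning shows $B_2$ is invariant under conjugation by $P^{\otimes \tilde K_2}$. Since any two injective maps $V' \to [N]$ differ by an element of $\mathcal S_N$, the two expectations take constant values $c_1$ and $c_2$ respectively on the set of injective $\phi$.

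To finish, I would count: there are exactly $N!/(N-|V'|)!$ injective maps $\phi : V' \to [N]$, and the same constancy argument applied directly to $\Tr^0_{N,T_1}(B_1)$ and $\Tr^0_{N,T_2}(B_2)$ (whose vertex set is also $V'$) yields $\E[\Tr^0_{N,T_i}(B_i)] = \frac{N!}{(N-|V'|)!}\, c_i$, while $\E[\Tr^0_{N,T'}(A_M)] = \frac{N!}{(N-|V'|)!}\, c_1 c_2$. Rearranging produces the claimed identity. The only mildly delicate point is ensuring that the tensor-product construction of $B_1$ and $B_2$ (involving transposes and powers $\epsilon(i)$) genuinely inherits $\mathcal S_N$-invariance from the invariance of the constituent families; this is routine, but needs to be checked edge by edge to make sure conjugation by a permutation matrix commutes correctly with the operations defining $B_1$ and $B_2$.
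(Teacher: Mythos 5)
Your proof is correct, and it takes a more self-contained route than the paper. The paper does not argue from first principles: after using Remark \ref{Rmk: WN Invariance} to assume $\mbf V_N$ is $\mcal S_N$-invariant, it first treats the case where each $V_\ell^{(N)}$ is an elementary tensor of $N\times N$ matrices, invokes the splitting lemma of traffic probability (\cite[Lemma 2.21]{malebook}) as a black box, and then extends to general $B_2$ by linearity of $\Tr^0_{N,T'}$. What you have written amounts to a direct proof of that cited lemma in the case at hand: expand the injective trace over injections $\phi\colon V'\to[N]$, factor the expectation of each summand via independence of $\mbf U_N$ and $\mbf V_N$, note that each factor is constant over injective $\phi$ because any two injections differ by a permutation and both $B_1$ and $B_2$ are invariant in law under conjugation by $P^{\otimes \tilde K_1}$, resp.\ $P^{\otimes \tilde K_2}$, and then count the $N!/(N-|V'|)!$ injections. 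Two points deserve emphasis. First, the entrywise factorization of $\Tr^0_{N,T'}(B_1\otimes B_2)$ into a $B_1$-entry times a $B_2$-entry is valid for arbitrary, not necessarily elementary, tensors, so your argument in fact dispenses with the paper's linearity reduction. Second, the matching of the combinatorial prefactors hinges on $T_1$ and $T_2$ carrying the \emph{full} vertex set $V'$ (including vertices isolated in $T_i$), so that each $\esp\big[\Tr^0_{N,T_i}(B_i)\big]$ is likewise a sum over exactly $N!/(N-|V'|)!$ injections with a constant summand $c_i$; this is exactly what Definition \ref{def:graph splitting} provides and is the reason the single normalizing factor $(N-|V'|)!/N!$ suffices. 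The invariance check you defer at the end (that transposes, adjoints and tensor powers commute with conjugation by a permutation matrix) is indeed routine and causes no difficulty.
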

\begin{proof}
As per Remark \ref{Rmk: WN Invariance}, we may assume that the matrices in $\mbf V_N$
are $\mcal S_N$-invariant. Suppose first that we can write
	$$V^{(N)}_\ell=V_{\ell,1}^{(N)}\otimes \cdots \otimes V^{(N)}_{\ell,K_3},\qquad \ell=1,\ldots,L$$
for some $\mcal S_N$-invariant $N\times N$ matrices $V^{(N)}_{\ell,i}$. Then,
we can write $B_1$ and $B_2$ as tensor products of $N\times N$ matrices,
where the matrices in $B_1$ are independent of those in $B_2$.
In this case the result follows directly from \cite[Lemma 2.21]{malebook}
(therein, $\tau_N^0[T'(\, \cdot \,)]$ is used to denote $\frac 1 N \Tr^0_{N,T'}$,
and the vertex sets $V_1$ and $V_2$ can be taken to be both equal to $V'$,
since we allow connected components consisting of a single vertex in $T_1$ and $T_2$).
Since $\Tr^0_{N,T'}$ and the expression of \cite[Equation (2.14)]{malebook} are linear,
we conclude that the result holds for general $B_1 \otimes B_2$ by representing
the latter as a sum of tensor products of $\mcal S_N$-invariant $N\times N$ matrices.
\end{proof}

The proof of \eqref{MainTask1} is therefore reduced to showing that
	\begin{multline}\label{Eq:Pgrm}
	{N}^{-\mathfrak L(T')/2}\,\esp\big[\Tr^0_{N,T'} ( A_M  )\big]\\
	=N^{\eta(T')} \big( 1 + o(1) \big) \times \prod_{i=1,2}  N^{-\mathfrak L(T_i)/2}\,\esp\Big[ \Tr^0_{N,T_i} ( B_i  )\Big]=o(1)
	\end{multline}
as $N\to\infty$, where 
	\eqa\label{Eq:Eta}
	 \eta(T')  = \tfrac12\big(\mathfrak L(T_1)+\mathfrak L(T_2)-\mathfrak L(T')-2|V'|\big).
	\qea
Note that the quotient relation between linear graphs induces a partial order that makes the set of linear graphs
of a fixed order a lattice. Thus, although $T_2$ may contain trivial components, the same argument used in \eqref{Eq:TrInjTr} yields
	\begin{align}
	\esp\big[\Tr_{N,T_2}(B_2)\big] = \sum_{\tilde T\geq T_2}   \esp\big[\Tr^0_{N,\tilde T}(B_2)\big].
	\end{align}
This then implies by M\"obius inversion \cite[Proposition 3.7.2]{stan} that
		$$\Tr_{N,T_2}^0(B_2) = \sum_{\tilde T\geq T_2}\mrm{Mob}(T_2,\tilde T) \Tr_{N,\tilde T}(B_2).$$
Since $T_2\leq\tilde T$ implies that $N^{-\mathfrak L(T_2)/2}\leq N^{-\mathfrak L(\tilde T)/2}$ (Remark \ref{Rem:quotient graph inequality}),
the assumption that $\mbf V_N$ satisfies the Mingo-Speicher bound (Definition \ref{Def:MSbound}) implies that the term $N^{-\mathfrak L(T_2)/2} \,\esp\big[\Tr^0_{N,T_2} ( B_2  )\big]$ is bounded.
Thus, \eqref{Eq:Pgrm} follows if we show that $\eta(T') \leq 0$, and that
	\begin{align}\label{eq:MainTask1 Final}
	N^{-\mathfrak L(T_1)/2}\,\esp\big[ \Tr^0_{N,T_1} ( B_1  )\big]=o(1).
	\end{align}
This is the subject of Sections \ref{Sec:Eta} and \ref{Sec:LimitInjTr}, respectively.

\subsubsection{Analysis of $\eta(T')$}\label{Sec:Eta}

We begin with some definitions.

\begin{definition} \label{GCC}
Let $\mcal C$ be the set of connected components of the graphs $T_1$ and $T_2$, called \emph{colored components}.
Let $\mcal{G}=(\mcal V,\mcal E)$ be the undirected graph, called the \emph{graph of colored components,} defined as follows:
	\begin{enumerate}
		\item The vertices of $\mcal V$ are the connected components in $\mcal C$. 
		\item Let $C_1,C_2\in\mcal C$ be connected components of $T_1$ and $T_2$,
		respectively. For every vertex $v\in V'$ of $T'$ that is in both $C_1$ and $C_2$, we associate
		an undirected edge in $\mcal E$ connecting $C_1$ and $C_2$.
	\end{enumerate}
\end{definition}

\begin{definition}
Let $\tilde T=(\tilde V,\tilde E)$ be a graph. For every $v\in\tilde V$, we let
$\deg_{\tilde T}(v)$ denote the number of edges in $\tilde E$ that are adjacent to $v$.
\end{definition}

Given that there is a one-to-one correspondence between the vertices of $T'$
and the edges of $\mcal G$, it is easy to see that
	$$2|V'|=\sum_{C\in\mcal C}\deg_{\mcal G}(C).$$
Hence, since $\mathfrak L$ is additive with respect to connected components,
we can reformulate $\eta(T')$ as
	$$\eta(T')=\frac12\left(\sum_{C\in\mcal C}\big(\mathfrak L(C)-\deg_{\mcal G}(C)\big)-\mathfrak L(T')\right).$$
In order to analyze this quantity, we propose a modification of the graph $\mcal G$.

Let $C_0\in\mcal C$ be a connected component with no cutting edge, and which is a leaf in
the graph $\mcal G$. Since $C_0$ has no cutting edge, then $\mathfrak L(C_0)=2$. Since $C_0$ is a leaf
in $\mcal G$, the single edge in $\mcal E$ adjacent to it adds a contribution of $2$ to the quantity
$\sum_{C\in\mcal C}\deg_{\mcal G}(C)$. In particular, if we remove $C_0$ and its adjacent edge
from $\mcal G$, then the quantity
	$$\sum_{C\in\mcal C}\big(\mathfrak L(C)-\deg_{\mcal G}(C)\big)$$
remains unchanged.

Let $\mcal G_0:=\mcal G$, and for every $n\geq1$, let $\mcal G_n=(\mcal V_n,\mcal E_n)$
be the graph obtained from $\mcal G_{n-1}$ by removing all connected components
with no cutting edges that are leaves in $\mcal G_{n-1}$, as well as their adjacent edges.
Clearly, there exists some $m\geq1$ such that $\mcal G_m=\mcal G_{m+1}=\mcal G_{m+2}=\cdots$,
namely, the first $m$ such that $\mcal G_m$ has no leaf which is a connected component with
no cutting edge. We refer to $\mcal G_m$ in the sequel as the \emph{pruning} of $\mcal G$. By arguing as in the previous paragraph, we see that
	\begin{align}\label{Equation: Eta 1}
	\eta(T')=\frac12\left(\sum_{C\in\mcal V_m}\big(\mathfrak L(C)-\deg_{\mcal G_m}(C)\big)-\mathfrak L(T')\right).
	\end{align}

For every $C\in\mcal V_m$, let $\mathfrak L'(C)$ denote the number of leaves in $\mcal F(C)$ that
do not contain a vertex that is attached to another connected component $C'\in\mcal V_m\setminus\{C\}$,
and let $d(C):=\mathfrak L(C)-\mathfrak L'(C)$ be the remaining leaves. We claim that for every $C\in\mcal V_m$,
	\begin{align}\label{Equation: Eta 2}
	d(C)\leq\deg_{\mcal G_m}(C)
	\qquad\text{and}\qquad
	\sum_{C\in\mcal V_m}\mathfrak L'(C)\leq\mathfrak L(T').
	\end{align}
Indeed, the first inequality holds since there are no connected components without cutting edges in $\mcal V_m$
(and thus $\mathfrak L(C)$ is actually equal to the number of leaves in $\mcal F(C)$), and the second inequality
is valid because every leaf counted by $\mathfrak L'(C)$ must already appear in $\mcal F(T')$.
By combining \eqref{Equation: Eta 1} with \eqref{Equation: Eta 2}, we finally conclude that $\eta(T')\leq0$,
as desired.

\subsubsection{Limiting Injective Forms of Haar Unitary Matrices}\label{Sec:LimitInjTr}

To conclude the proof of \eqref{MainTask1}, it now only remains to establish \eqref{eq:MainTask1 Final}.
For this, we must understand the asymptotic behaviour of the term $\esp\big[\Tr^0_{N,T_1}(B_1)\big]$ for large $N$.
In order to state the result we need, we recall a few more notions from graph theory.

\begin{definition} Let $\tilde T$ be a linear graph. For every edge $e=(v,w)$ in $\tilde T$, we denote $e^t=(w,v)$.
\begin{enumerate}
	\item A {\it path} of $\tilde T$ (also called a {\it walk}) is a sequence of edges $e_i$ of $\tilde T$, $i=1\etc n$, and an order of passage for each step $t_i\in \{1,t\}$ such that the target of $e_i^{t_i}$ is the source of $e_{i+1}^{t_{i+1}}$.
	\item A {\it cycle} of $\tilde T$ (also called {\it closed walk}) is a path such that the target of $e_n^{t_n}$ is the source of $e_{1}^{t_1}$
	(with the same notation as above).
	\item A {\it circuit} of $\tilde T$ is a cycle where no edge is visited twice.
	\item A {\it simple cycle} of $\tilde T$ is a cycle where no vertex is visited twice, except for the first (and last) vertex. 
	\item We say that $\tilde T$ is a {\it forest of cacti} whenever each edge belongs to exactly one simple cycle. 
	\item A forest of cacti is said to be {\it well oriented} when the edges of a same cycle follow the same orientation.
\end{enumerate}
\end{definition}

\begin{remark}
It is worth noting here that the notion of cactus presented in the above definition differs from an arguably more common definition,
which is to assume that every edge belongs to {\it at most} one simple cycle. 
\end{remark}

\begin{figure}[!t]
\centering%

{\includegraphics{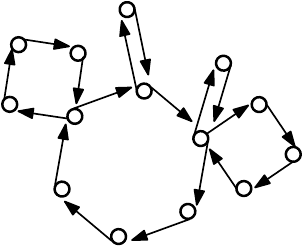}}
\caption{A well oriented cactus.}
  \label{fig2_Cactus}
\end{figure}

\begin{definition}
Let $\tilde T$ be a well oriented forest of cacti of order $K$, and
let $\tilde\delta:[K]\to [L]$ and $\tilde\eps:[K]\to \{1,*\}$ be two labellings of $\tilde T$'s edges.
\begin{enumerate}
	\item If $\tilde\delta(k_1) = \dots = \tilde\delta(k_n)$ for every simple cycle $e_{k_1},\ldots,e_{k_n}$,
	then we say that $(\tilde T,\tilde\delta,\tilde\eps)$ is {\it well colored}.
	\item If every simple cycle of $\tilde T$ is of even size and the values of $\tilde\eps$ alternate along indices
	of each cycle (i.e., we can enumerate the edges of every simple cycle $e_{k_1},\ldots,e_{k_{2n}}$ in such
	a way that $\tilde\eps(k_i)=1$ if $i$ is even and $\tilde\eps(k_i)=*$ if $i$ is odd), then we say that $(\tilde T,\tilde\delta,\tilde\eps)$ is
	{\it alternated}.
\end{enumerate}
If $(\tilde T,\tilde\delta,\tilde\eps)$ is both well colored and alternated, we say that it is {\it valid}.
\end{definition}

The following proposition, which is a special case of a more general result in \cite{cdm}, is based on the Weingarten calculus \cite{collins-imrn,cs}. It can also be derived from the limiting traffic distribution of a single Haar unitary matrix and the rule of traffic independence \cite{malebook}.

\begin{proposition}[{\cite[Proposition 3.7]{malebook}}]\label{Prop:LimHaar}
Let $\tilde T$ be a linear graph of order $K$, and let $\tilde\delta:[K]\to [L]$ and $\tilde\eps:[K]\to \{1,*\}$ be labellings of $\tilde T$'s edges. If we denote by $c(\tilde T)$ the number of connected components of $\tilde T$, then the limit
	\begin{align}
	\tau_{\mbf U}(\tilde T,\tilde\delta, \tilde\eps)=\lim_{N\to\infty}N^{-c(\tilde T)}\,\esp\Big[ \Tr^0_{N,\tilde T}\big( {U\toN_{\tilde\delta(1)}}^{\tilde\eps(1)} \otimes \cdots \otimes {U\toN_{\tilde\delta(K)}}^{\tilde\eps(K)} \big)\Big]
	\end{align}
exists and is finite. More precisely, we have that
	$$\tau_{\mbf U}(\tilde T,\tilde\delta, \tilde\eps) =\one_{\{(\tilde T,\tilde\delta,\tilde\eps)\text{ is valid}\}}\times\prod_{\mathfrak c \mrm{ \ simple \ cycle \ of \ }\tilde T}(-1)^{k_\mathfrak c-1} \frac{(2k_\mathfrak c-2)!}{(k_\mathfrak c-1)!k_\mathfrak c!},$$
where the above product is taken over all simple cycles of $\tilde T$, and $2k_\mathfrak c$ denotes the length of a particular cycle $\mathfrak c$.
\end{proposition}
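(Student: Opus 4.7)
The plan is a direct Weingarten-calculus computation in the spirit of \cite{collins-imrn,cs}, organized along the lines of the traffic-probability framework of \cite{malebook}. First, I would expand the injective trace as $\Tr^0_{N,\tilde T}(U_{\tilde\delta(1)}^{\tilde\eps(1)}\otimes\cdots\otimes U_{\tilde\delta(K)}^{\tilde\eps(K)})$ as a sum over injective $\phi:V\hookrightarrow[N]$ of products of matrix entries of the $U_\ell^{(N)}$ and their adjoints. Since the $U_\ell^{(N)}$ are independent across $\ell\in[L]$, the expectation factors as a product of color-wise moments. For each color $\ell$, let $n_\ell^+$ and $n_\ell^-$ denote the number of color-$\ell$ edges labelled $1$ and $*$ respectively. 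The Weingarten formula yields zero unless $n_\ell^+=n_\ell^-=:n_\ell$, and otherwise rewrites the color-$\ell$ expectation as
\[
\sum_{\sigma_\ell,\tau_\ell\in\mcal S_{n_\ell}}\Wg_N(\tau_\ell\sigma_\ell^{-1})\prod\text{(Kronecker deltas on the row and column indices)},
\]
where the Kronecker factors identify certain $\phi(v)$'s pairwise.

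Next, I would plug these expansions in and analyze the polynomial order in $N$. Using the sharp asymptotic $\Wg_N(\pi)=\Moeb(\pi)\,N^{-n-|\pi|}+O(N^{-n-|\pi|-2})$ together with the count of injective $\phi$ compatible with the delta constraints, each $((\sigma_\ell,\tau_\ell))_\ell$-term has order $N^{m-\sum_\ell(n_\ell+|\tau_\ell\sigma_\ell^{-1}|)}$, where $m$ is the number of free indices after the identifications. A genus-type inequality (a refinement of the Mingo--Speicher bound recalled as Theorem \ref{Thm:MS}) yields $m-\sum_\ell|\tau_\ell\sigma_\ell^{-1}|\le c(\tilde T)+\sum_\ell n_\ell$, with equality exactly when (i) every edge of $\tilde T$ belongs to a unique simple cycle, (ii) each cycle is monochromatic and has alternating $*$'s, and (iii) each cycle is well-oriented. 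These are precisely the three conditions encoded in the definition of \emph{valid}, so any non-valid $(\tilde T,\tilde\delta,\tilde\eps)$ contributes only subleading terms and the normalized limit vanishes.

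Finally, for valid $(\tilde T,\tilde\delta,\tilde\eps)$ the surviving pairs $(\sigma_\ell,\tau_\ell)$ decouple across the simple cycles of $\tilde T$, since the constraints on different cycles involve disjoint edges. On a single simple cycle $\mathfrak c$ of length $2k_\mathfrak c$, the sum of $\Moeb(\tau\sigma^{-1})$ over the saturating non-crossing pairs reduces to a classical single-cycle Weingarten computation whose value is the signed Catalan number $(-1)^{k_\mathfrak c-1}(2k_\mathfrak c-2)!/((k_\mathfrak c-1)!\,k_\mathfrak c!)$; multiplying over the simple cycles gives the product in the statement. The main obstacle is the saturation analysis in the previous paragraph: one must rule out all other quotient structures that a priori could also attain the maximal order $N^{c(\tilde T)}$. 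The cleanest route, and the one implemented in \cite{malebook}, is to first pin down the injective traffic distribution of a single Haar unitary (which handles one cycle directly via Weingarten) and then use the rule of traffic independence to assemble the general case, since independence automatically factorizes the computation over colors and forces the cactus structure through compatibility of pairings.
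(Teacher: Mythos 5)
The paper does not actually prove Proposition \ref{Prop:LimHaar}: it is imported verbatim from \cite[Proposition 3.7]{malebook} (itself a special case of \cite{cdm}), with the remark that it can be obtained either by Weingarten calculus \cite{collins-imrn,cs} or from the limiting traffic distribution of a single Haar unitary plus the rule of traffic independence. Your sketch is exactly the first of those two derivations, and its architecture is sound: expand $\Tr^0_{N,\tilde T}$ over injective $\phi$, factor the expectation over colors by independence, apply the Weingarten formula (noting that, because $\phi$ is injective, a pair $(\sigma_\ell,\tau_\ell)$ contributes only when its Kronecker constraints are already equalities of vertices of $\tilde T$, so the $\phi$-sum is simply $N(N-1)\cdots(N-|V|+1)$), and then compare orders using $\Wg_N(\pi)\sim\Moeb(\pi)N^{-2n+\#\mathrm{cycles}(\pi)}$; the per-cycle coefficient $(-1)^{k-1}(2k-2)!/((k-1)!k!)$ is indeed the leading M\"obius/Weingarten coefficient of a full cycle in $\mcal S_k$. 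The one substantive caveat is that the ``genus-type inequality'' and, above all, its equality case --- showing that the maximal order $N^{c(\tilde T)}$ is attained precisely when $(\tilde T,\tilde\delta,\tilde\eps)$ is a valid well-oriented forest of cacti, and that the surviving permutation pairs then decouple cycle by cycle --- is the entire content of the proof and is asserted rather than carried out. You identify this correctly as the obstacle and point to the traffic-independence factorization as the clean way to close it, which is precisely the route taken in \cite{malebook}; so your proposal is a faithful outline of the literature proof, at the same level of detail at which the paper itself invokes the result.
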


We recall that the edges of $T_1$ are enumerated by pairs of the form
 	$$(k,i),\qquad k\in[K_1+K_2], i\in[p]$$
endowed with the alphabetical order. Moreover, we recall that the $^*$-monomial $M$ is written as
 	$$M(\mbf X) = X_{\delta(1)}^{\eps(1)} \dots X_{\delta(p)}^{\eps(p)}$$
for some $\delta:[p]\to[L]$ and $\eps:[p]\to\{1,*\}$ (see \eqref{eq: word M}).
We note that $\delta$ and $\eps$ naturally induce a labelling of $T_1$'s edges
(which we also denote as $\delta$ and $\eps$ for simplicity) as follows:
For every $k\in[K_1+K_2]$ and $i\in[p]$, we let
	$$\delta(k,i)=\delta(i)
	\qquad\text{and}\qquad
	\eps(k,i)=\eps(i).$$
Thus, it follows from Proposition \ref{Prop:LimHaar} that
\begin{multline}\label{eq:CactusLimit}
\lim_{N\to\infty}N^{-c(T_1)}\,\esp\big[ \Tr^0_{N,T_1} ( B_1  )\big]\\
=\one_{\{(T_1,\delta,\eps)\text{ is valid}\}}\times\prod_{\mathfrak c \mrm{ \ simple \ cycle \ of \ }T_1}(-1)^{k_\mathfrak c-1} \frac{(2k_\mathfrak c-2)!}{(k_\mathfrak c-1)!k_\mathfrak c!}.
\end{multline}
We remark that the renormalization of $N^{-c(T_1)}$ in \eqref{eq:CactusLimit} is different from $N^{-\mathfrak L(T_1)/2}$,
which is what we use in \eqref{eq:MainTask1 Final}. However, it is clear from Definition \ref{Def:Leaves} that $\mathfrak L(T_1)\geq 2c(T_1)$,
and that there are cases where $\mathfrak L(T_1)=2c(T_1)$ (for instance when $T_1$ has no cutting edge). Therefore,
the asymptotic \eqref{eq:MainTask1 Final} is proved if we show that $(T_1,\delta,\eps)$ is not a valid well oriented forest of cacti.
In order to prove this, we compare some basic properties of valid well oriented forests of cacti with the structure that the
nontrivial $^*$-monomial $M$ imposes on $T_1$.

The first property of well oriented forests of cacti that is of interest to us is a type of {\it nested simple cycle} structure. This can be described effectively using noncrossing partitions.

\begin{definition}
A partition $\sigma\in\mcal P(n)$ ($n\in\mbb N$) is said to be {\it noncrossing} if no two blocks {\it cross} each other,
that is,
there exist no two blocks $C\neq\tilde C$ in $\sigma$ and $i,j\in C$, $\tilde i,\tilde j\in\tilde C$ such that $i < \tilde i < j < \tilde j$.
A block $C=\{i_1<\cdots<i_m\}$ in a noncrossing partition is said to be {\it inner} if there exist another block
$\tilde C=(\tilde i_1<\ldots<\tilde i_n)$ such that $\tilde i_1<i_1<\tilde i_n$, which also implies that
$\tilde i_1<i_m<\tilde i_n$.
\end{definition}

\begin{lemma}\label{lemma:nested simple cycles}
Every connected component of a well oriented forest of cacti $\tilde T$ has a circuit that follows the orientation of $\tilde T$'s
edges. In particular, if $\mathfrak c=(e_1,e_2,\ldots,e_n)$ is such a circuit, then the partition $\sigma_{\mathfrak c}$ defined by
	$$i\sim_{\sigma_\mathfrak c}j\iff\text{$e_i$ and $e_j$ are in the same simple cycle}$$
is a noncrossing partition with blocks of even size.
\end{lemma}
\begin{proof}
The existence of the circuit follows from the fact that each
vertex in a forest of cacti must be contained in a unique simple cycle.
Next, suppose by contradiction that there exists $i<i'<j<j'$ such that
$i\sim_{\sigma_\mathfrak c}j$ and $i'\sim_{\sigma_\mathfrak c}j'$.
Then, $e_{i'}$ is part of a simple cycle that is entirely contained in the
sequence $e_i,e_{i+1},\ldots,e_{i'},\ldots,e_{j}$, and which
does not contain $e_{j'}$. However, since $i'\sim_{\sigma_\mathfrak c}j'$,
this means that $e_{i'}$ must be part of two distinct simple cycles (one of which
contains $e_{j'}$), which is a contradiction.
\end{proof}

Let $\tilde T_0$ be the disjoint union of the graphs represented by the paths $\mathfrak p_k$ defined in \eqref{path1}-\eqref{path2}
for $k\in[K_1+K_2]$ and the vertices coming from the paths
$\mathfrak p_k$ for $k>K_1+K_2$ (i.e., we include all vertices
that appear in the paths $\mathfrak p_k$ for $k>K_1+K_2$, but we exclude the edges). With notations as in \eqref{path1} and \eqref{path2}, for $k\leq K_1$ we call $v_k$ and $w_k$ respectively the source and the target of $\mathfrak p_k$, whereas for $k\in [K_1+1,K_1+K_2]$ we interchange the role of the vertices, calling $v_k$ the target and $w_k$ the source of $\mathfrak p_k$. We recall (Definitions \ref{def:Linearized Graph} and \ref{def:graph splitting}) that $T_1$ is a quotient of $\tilde T_0$.

With this in mind, we have the following consequence of Lemma \ref{lemma:nested simple cycles}.

\begin{lemma}\label{Lem:CycleValid}
If $T_1$ is valid, then it is an edge-disjoint union of circuits that are compositions of the paths $(\mathfrak p_k)_{k\leq K_1+K_2}$
(here, we say that two paths $\mathfrak p$ and $\mathfrak p'$ can be composed if the target of the last step of $\mathfrak p$ is the source of the first step of $\mathfrak p'$).
\end{lemma}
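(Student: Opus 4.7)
My plan is to prove the lemma via a local analysis at each interior vertex of every path $\mathfrak p_k$. By Fact \ref{fact:nested simple cycles}, each connected component of $T_1$ carries an Eulerian circuit following the orientations of the edges, with the simple cycles of $T_1$ inducing a noncrossing partition of the circuit. At every vertex $\bar v$ of a well-oriented cactus, this Eulerian circuit pairs each incoming edge with an outgoing edge, with two edges paired if and only if they lie on a common simple cycle through $\bar v$. The whole lemma reduces to showing that at every interior vertex $\bar x_i$ (the image in $T_1$ of the $i$-th internal vertex of some path $\mathfrak p_k$, for $1 \le i \le p-1$), the pairing is the natural one inherited from $\mathfrak p_k$: for $k \le K_1$ this is the pair $(k,i+1)$ in / $(k,i)$ out, and for $k \in [K_1+1, K_1+K_2]$ it is the symmetric pair with reversed orientation. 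Once this natural pairing is established at every interior vertex, the circuit traverses each $\mathfrak p_k$ as a single length-$p$ block, and consecutive blocks share exactly the endpoint vertex identifications required for composition.

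To establish the natural-pairing claim, I would argue by contradiction. Suppose at some $\bar x_i$ the incoming edge $(k,i+1)$ is paired with an outgoing edge $e'\neq(k,i)$; then $e'$ comes from some path $\mathfrak p_{k'}$ at some position $j$, and the simple cycle $\mathfrak c$ containing $(k,i+1)$ and $(k',j)$ must be well-colored and alternated. Well-coloredness forces $\delta(j)=\delta(i+1)$, and alternatedness forces $\eps(j)\neq\eps(i+1)$. I would then propagate the constraint along $\mathfrak c$: at each successive vertex of $\mathfrak c$ that is not identified with anything else, the only outgoing edge available is the unique successor of the incoming edge within its ambient path, so the position used by $\mathfrak c$ is forced; at each vertex of $\mathfrak c$ which is the image of several vertices of $\tilde T_0$, a cross-pairing analogous to the one at $\bar x_i$ is needed, imposing a further compatibility condition of the form $\delta(\cdot)=\delta(i+1)$ with alternating $\eps$.

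The main obstacle is to show that this chain of constraints cannot close consistently. The argument has a clean special case: if $\delta(i+1)$ is attained by $\delta$ at only one position of $[p]$, then every edge of $\mathfrak c$ is at that single position, all such edges carry the same $\eps$-label, and no cycle of even length can be alternated, immediately contradicting validity. In general, I would track the sequence of pairs $(i',\eps(i'))$ visited by $\mathfrak c$, exploit the rigidity of $\tilde T_0$'s directed paths, and reduce the closure condition to a combinatorial statement about the $^*$-monomial $M$: no nontrivial pattern of cross-pairings compatible with the $\delta$- and $\eps$-labels read off from $M$ can close into a valid collection of simple cycles covering every affected edge. I expect this combinatorial core to be the most delicate part of the proof, particularly when several interior vertex identifications interact, since one must then handle a system of coupled closure conditions simultaneously.
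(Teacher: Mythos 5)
Your proposal sets out to prove something substantially stronger than what the paper proves (and uses), and the key steps are either incorrect or left undone. The paper's own argument is a two-line parity observation: in a forest of cacti every vertex has in-degree equal to out-degree, so at each vertex class of the quotient $T_1$ the number of path-targets of the $\mathfrak p_k$'s glued there equals the number of path-sources (the interior vertices of the paths contribute one in-edge and one out-edge each and hence cancel). One can therefore match targets to sources at each vertex class and concatenate the paths into closed, orientation-respecting circuits that partition the edge set of $T_1$. This is an \emph{existence/construction} statement, and it is all that the downstream argument needs: one feeds such a circuit into Lemma \ref{lemma:trivial circuit} and the free-group computation. No analysis of interior vertices, colorings, or alternation is required at this stage.

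Your plan instead tries to show that \emph{the} Eulerian circuit of each component traverses every $\mathfrak p_k$ as a consecutive block, by controlling the in/out pairing at interior vertices. Two concrete problems arise. First, your foundational claim --- that the Eulerian circuit of a well-oriented cactus pairs an incoming edge with an outgoing edge at a vertex if and only if the two edges lie on a common simple cycle --- is false: in a figure-eight (two cycles sharing one vertex $x$), every orientation-respecting Eulerian circuit must, on some visit to $x$, enter along one petal and exit along the other; the noncrossing structure of Fact \ref{fact:nested simple cycles} permits exactly such cross-pairings at the seams between consecutive or nested simple cycles. So the lemma does not "reduce to" the natural-pairing claim, and indeed the block-traversal statement can genuinely fail for orientation-respecting Eulerian circuits even when a path-composition circuit exists. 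Second, the "combinatorial core" --- that the chain of $\delta$/$\eps$ constraints arising from a cross-pairing cannot close up --- is not proved; you verify only the special case where $\delta(i+1)$ occurs once in $[p]$ and explicitly defer the general case. That missing core is essentially equivalent to the content of Lemma \ref{lemma:trivial circuit} combined with the Nielsen--Schreier argument that the paper carries out \emph{after} this lemma, so completing your route would amount to re-deriving the remainder of the proof of \eqref{MainTask1} inside this lemma, using the nontriviality of $M$, which the lemma as stated neither assumes nor needs. The fix is to prove only the existence statement via the degree argument above.
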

\begin{proof} If $T_1$ is a forest of cacti, then there is no vertex of odd degree. In particular, every vertex of degree one in $\tilde T_0$
(which is either a target or source of a path $p_k$) is identified with at least one other such vertex in the quotient $T_1$. We may then first form a quotient of $\tilde T_0$ by composing the paths whose targets and sources have been identified as in $T_1$, and then obtain $T_1$ by adding more identifications.
\end{proof}

Next, in order to better understand the structure that $M$ imposes on $T_1$, we introduce the concept of a word
induced by a path.

\begin{definition}
Let $\tilde T$ be a graph of order $K$ with two labellings $\tilde\delta:[K]\to [L]$ and $\tilde\eps:[K]\to \{1,*\}$.
Let $\mathfrak p=(e_{k_n},\ldots,e_{k_2}, e_{k_1})$ be a path of $\tilde T$ which follows the edge orientations in $\tilde T$.
We denote by $M_{\mathfrak p}$ the $^*$-monomial
	$$M_{\mathfrak p}(\mbf X)=X^{\tilde\eps(k_1)}_{\tilde\delta(k_1)}\cdots X_{\tilde\delta(k_n)}^{\tilde\eps(k_n)}.$$
\end{definition}

Lemma \ref{lemma:nested simple cycles} has the following consequence.
	
\begin{lemma}\label{lemma:trivial circuit}
Let $(\tilde T,\tilde\delta,\tilde\eps)$ be valid. If $\mathfrak c$ is a circuit as in Lemma \ref{Lem:CycleValid} of $\tilde T$ which agrees with the edge orientations
in $\tilde T$, then $M_\mathfrak c$
is trivial.
\end{lemma}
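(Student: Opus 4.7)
My plan is to prove this by induction on the number $n$ of edges in the circuit $\mathfrak{c}$. The base case $n=0$ is immediate since the empty word represents the identity. For the inductive step, I would first invoke Fact~\ref{fact:nested simple cycles} to produce the partition $\sigma_\mathfrak{c}$ of $\{1,\ldots,n\}$ that groups indices by common membership in a simple cycle; by that fact this partition is noncrossing and all its blocks have even size. A standard property of noncrossing partitions guarantees the existence of an \emph{innermost} block $B$ (minimal for the nesting order), whose indices are necessarily consecutive, say $B=\{i+1,\ldots,i+2m\}$.

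The edges $e_{k_{i+1}},\ldots,e_{k_{i+2m}}$ indexed by $B$ form a simple cycle $\mathfrak{s}$ of $\tilde T$. Well-coloring forces a common label $\tilde\delta(k_{i+1})=\cdots=\tilde\delta(k_{i+2m})=:\ell$. Because $\tilde T$ is well-oriented and $\mathfrak{c}$ respects edge orientations, these $2m$ consecutive steps of $\mathfrak{c}$ sweep around $\mathfrak{s}$ in its natural cyclic order, so up to an overall cyclic shift the sequence $\bigl(\tilde\eps(k_{i+1}),\ldots,\tilde\eps(k_{i+2m})\bigr)$ coincides with the alternating pattern guaranteed by the alternated condition. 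Consequently, the sub-word
$$
X_\ell^{\tilde\eps(k_{i+1})}\,X_\ell^{\tilde\eps(k_{i+2})}\cdots X_\ell^{\tilde\eps(k_{i+2m})}
$$
is either $X_\ell X_\ell^* X_\ell X_\ell^*\cdots X_\ell X_\ell^*$ or $X_\ell^* X_\ell X_\ell^* X_\ell \cdots X_\ell^* X_\ell$, and hence reduces to the identity on any family of unitaries via successive cancellations.

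To finish I would delete the sub-sequence $e_{k_{i+1}},\ldots,e_{k_{i+2m}}$ from $\mathfrak{c}$, obtaining a shorter sequence $\mathfrak{c}'$. Since the full cycle $\mathfrak{s}$ was traversed, the source of $e_{k_{i+1}}$ equals the target of $e_{k_{i+2m}}$, so $\mathfrak{c}'$ remains a circuit (agreeing with edge orientations) of the well-oriented forest of cacti obtained from $\tilde T$ by removing the edges of $\mathfrak{s}$; the restricted labellings stay well-colored and alternated. The inductive hypothesis then gives that $M_{\mathfrak{c}'}$ is trivial, and since $M_\mathfrak{c}$ and $M_{\mathfrak{c}'}$ agree once the innermost sub-word has been cancelled, $M_\mathfrak{c}$ is trivial as well.

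The main obstacle to make rigorous is the claim that the $2m$ consecutive edges of $\mathfrak{c}$ sitting inside the innermost block actually sweep out $\mathfrak{s}$ in its natural cyclic order without leaving and re-entering. This is precisely where the minimality of $B$ (no other block is nested inside it) combines with the cactus property (any two simple cycles share at most one vertex) and the well-oriented condition: if the walk exited $\mathfrak{s}$ mid-traversal it would return via another simple cycle, whose edges would then form a block of $\sigma_\mathfrak{c}$ nested inside $B$, contradicting the choice of $B$.
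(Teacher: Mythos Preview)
Your proposal is correct and follows essentially the same approach as the paper's proof: both invoke the noncrossing partition $\sigma_{\mathfrak c}$ from Fact~\ref{fact:nested simple cycles}, observe that an inner (or innermost) block corresponds to an uninterrupted simple cycle whose associated sub-word is $(X_\ell X_\ell^*)^m$ or $(X_\ell^* X_\ell)^m$ and hence trivial, and then peel off such blocks to reduce to a shorter circuit. Your version carries this out as an explicit induction on the length of $\mathfrak c$ and spells out more carefully why the innermost block is a consecutive interval that sweeps a full simple cycle, whereas the paper simply says ``by removing each inner block from $\sigma_{\mathfrak c}$ one by one, we are eventually left with a simple cycle''; the substance is the same.
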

\begin{proof}
Suppose first that $\mathfrak c$ is a simple cycle.
Since $(\tilde T,\tilde\delta,\tilde\eps)$ is valid, it is well colored and alternated,
and thus we can write
	$$M_\mathfrak c(\mbf X)=\underbrace{X_\ell X^{*}_\ell\cdots X_\ell X^{*}_\ell}_{m\text{ times}}
	\quad\text{or}\quad
	\underbrace{X^{*}_\ell X_\ell\cdots X^{*}_\ell X_\ell}_{m\text{ times}}$$
for some integer $m\in\mbb N$ and index $\ell\in[L]$. Clearly, this $*$-word reduces to 1 when evaluated in unitary operators.

More generally, let us denote $\mathfrak c=(e_1,\ldots,e_n)$, and let the noncrossing partition $\sigma_\mathfrak c\in\mcal P(n)$ be defined as in 
Lemma \ref{lemma:nested simple cycles}. Suppose that $\bar{\mathfrak c}$ denotes the (smaller) circuit obtained from $\mathfrak c$ by removing the edges contained in any inner block of
$\sigma_\mathfrak c$. By repeating the argument used in the case where $\mathfrak c$ was a simple cycle, it is easy to see that
	$M_\mathfrak c(\mbf u)=M_{\bar{\mathfrak c}}(\mbf u)$
for any family of unitary operators $\mbf u$,
since every inner block of $\sigma_\mathfrak c$ corresponds to an uninterrupted simple cycle within $\mathfrak c$.
By removing each inner block from $\sigma_\mathfrak c$ one by one, we are eventually left with a simple cycle,
concluding the proof.
\end{proof}

We now have all the necessary ingredients to conclude the proof of \eqref{MainTask1}. Recalling from \eqref{eq: word M} that $M=X_{\delta(1)}^{\eps(1)} \cdots X_{\delta(p)}^{\eps(p)}$, we define the mirrored word $M_{mirr}= X_{\delta(p)}^{\eps(p)} \cdots X_{\delta(1)}^{\eps(1)}$. Suppose by contradiction that $(T_1,\delta,\eps)$ is valid.
For each $k\in[K_1+K_2]$, it holds that $M_{\mathfrak p_k}=M$ for $k\leq K_1$ and $M_{\mathfrak p_k}=M_{mirr}$ for $k\in [K_1+1,K_1+K_2]$.

By Lemma \ref{Lem:CycleValid}, $T_1$ is quotient of a well oriented forest of cacti $T_1'$ such that if $\mathfrak c$ is a circuit of a connected component of $T'_1$ which agrees with the edge orientations in $T'_1$,
then it must be the case that $M_\mathfrak c$ is a non commutative product $\omega_\mathfrak c$ of powers of $M$ and $M_{mirr}$, such as $M^{\theta_1}M_{mirr}^{\theta_2} \cdots  M^{\theta_{p-1}}M_{mirr}^{\theta_{p}}$ for $\theta_i\geq 1$.

Therefore, Lemma \ref{lemma:trivial circuit} implies that
$\omega_\mathfrak c$
is trivial.
Let $\mbf u=(u_1,\ldots,u_L)$ be a Haar unitary system.
Since $M(\mbf u)\neq 1$ (as $M$ is a nontrivial $^*$-monomial), and by Nielsen-Schreier theorem, the group generated by $M(\mbf u)$ and $M_{mirr}(\mbf u)$ is either $\mbb Z$ or $\mbb F_2$. The group cannot be $\mbb F_2$, because if that were the case,
then $\omega_\mathfrak c$ would not be trivial. Thus, the group generated by $M(\mbf u)$ and $M_{mirr}(\mbf u)$ must be $\mbb Z$.
Consequently, without loss of generality, we can assume there exists a $k\in \mbb Z$, such that $M=(M_{mirr}^k) =(M^k)_{mirr}$. But the mirror operation is an involution, so
$M_{mirr} = M^k$ and then $M = M^{2k}$. Since $M$ is non trivial this is absurd.

We therefore finally conclude that
$(T_1,\delta,\eps)$ cannot be valid, whence \eqref{Eq:Pgrm} holds, as desired.

\section{Proof of Theorem \ref{Th:Applications}}\label{Sec:Applications}

\subsection{Asymptotic representation theory}\label{sec application intro}

This manuscript deals with asymptotic freeness for tensors. In the case of unitary operators, it is possible to turn this question into 
a problem of harmonic analysis over the free group. This is what we would like to discuss in this section. 

Voiculescu established asymptotic freeness for sequences of group in the large dimension limit in \cite{Voic91,voiculescu98}.
However, long before his asymptotic freeness results in the nineties, he had already studied the limit of unitary groups, from the
slighly different point of view of representation theory \cite{MR0458188}. 
His result here generalized results of Thoma \cite{MR173169}, and it was discovered in \cite{MR681202} that finite dimension 
group approximation
was a natural way to prove the results. 

A way to reformulate some questions of this paper is: consider sequences of unital functions of {\it positive type}
(sometimes called {\it positive definite}) $\phi_N$ on the unitary group $\mcal U_N$.
Under which conditions will $\phi_N$ be asymptotically free almost surely for i.i.d.
Haar unitary variables in $\mcal U_N$?

We restrict our question slightly further -- without however missing any example provided by the results
contained in this manuscript -- by requiring in addition 
the limit to be Haar distributions, i.e. all non-trivial moments tend to zero. 
An important observation is that if
$\phi_N$ satisfy this condition, then the same will hold true for any polynomial 
in these, as soon as it does not have any constant component. 
If one wants to ensure that this polynomial operation remains a state, it is enough to require that the coefficients of 
each product be non negative, and that they add up to $1$.
Indeed, in terms of representation theory, taking a product corresponds to a tensor product, and taking a barycenter (with rational coefficients)
corresponds to taking direct sums of representations.
Put differently, this paper can be interpreted as saying that many states that are not tracial
yield also asymptotic freeness.

\subsection{Asymptotic freeness for any representation}\label{sec application RT}

We begin by proving the asymptotic freeness of $(\mbf U_N,\overline{\mbf U_N})$
with respect to arbitrary irreducible rational representations.
We first recall a result of Mingo and Popa.
\begin{theorem}[{\cite{mp}}]\label{Th:MP}
The family
\[(\mbf U_N,\mbf U_N^t):=\big(U_1^{(N)},\ldots, U_K^{(N)},U_1^{(N)t},\ldots,U_K^{(N)t}\big),\]
converges to a Haar unitary system almost surely and in expectation as $N\to\infty$
in the space $(\mbb M_N(\mbb C),\tr_N)$.
\end{theorem}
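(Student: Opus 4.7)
The plan is to compute the joint $*$-moments of $(\mbf U_N,\mbf U_N^t)$ via the Weingarten calculus and verify that they converge to those of a free Haar unitary system of size $2K$. Since the matrices are uniformly bounded in operator norm, convergence in expectation implies convergence in distribution; almost-sure convergence then follows from a variance estimate together with Borel--Cantelli. Concretely, it suffices to prove that $\lim_{N\to\infty}\esp\big[\tr_N\big(M(\mbf U_N,\mbf U_N^t)\big)\big]=0$ for every non-trivial $*$-monomial $M$ in $2K$ variables.

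First I would unfold the trace into a sum over matrix indices. The identities $(U^t)_{ab}=U_{ba}$, $(U^*)_{ab}=\overline{U_{ba}}$, and $\bar U_{ab}=\overline{U_{ab}}$ imply that every factor in the unfolded expression is either a single entry $U^{(\ell)}_{ab}$ or its complex conjugate. By the independence of the $U_\ell^{(N)}$ across $\ell$, the expectation factorises into a product of single-$U_\ell$ Weingarten integrals,
\[\esp\Big[\prod_{k=1}^n U_{i_k j_k}\,\overline{U_{i'_k j'_k}}\Big]=\sum_{\sigma,\tau\in S_n}\Wg(N,\sigma\tau^{-1})\prod_{k=1}^n\delta_{i_k,\,i'_{\sigma(k)}}\,\delta_{j_k,\,j'_{\tau(k)}}.\]
Applying the asymptotic $\Wg(N,\sigma)=N^{-n-|\sigma|}\big(\Moeb(\sigma)+O(N^{-2})\big)$, where $|\sigma|$ is the Cayley length, and counting the free indices that survive after imposing the Kronecker constraints, each pair $(\sigma,\tau)$ contributes a term of definite order in $N$.

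The leading-order terms correspond to ``planar'' permutation pairs which, in the classical Voiculescu analysis, give rise to the non-crossing pair partitions governing free Haar unitary moments. The main obstacle, and where the essential new input is needed, is to check that transpositions do not disturb this planar-selection principle. The key observation is that transposing a matrix entry exchanges its row and column indices, which effectively interchanges the roles of $\sigma$ and $\tau$ in the associated constraints. A careful bookkeeping then shows that the Weingarten sum for $M(\mbf U_N,\mbf U_N^t)$ matches that of an auxiliary $*$-monomial $\widetilde M$ in $2K$ \emph{independent} Haar unitaries, whose limiting moment is the free Haar value and thus vanishes by the nontriviality of $M$.

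Finally, I would obtain almost-sure convergence by applying the same Weingarten template to $\esp\big[\big|\tr_N\big(M(\mbf U_N,\mbf U_N^t)\big)\big|^2\big]$: the squared trace unfolds into a Weingarten sum over two copies of $M$, and the same asymptotic bookkeeping yields a variance of order $O(N^{-2})$, which is summable over $N$ and produces almost-sure convergence via Borel--Cantelli.
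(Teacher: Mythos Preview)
The paper does not prove this theorem; it is quoted from Mingo--Popa \cite{mp}, and the only argument the paper supplies is the Remark immediately after the statement: Mingo--Popa establish the convergence in expectation and second-order asymptotic freeness (Corollary~20 and Proposition~38 of \cite{mp}), and second-order freeness forces the variance to be $O(N^{-2})$, which gives almost-sure convergence by Borel--Cantelli. So there is nothing in the present paper to compare your proposal against beyond that pointer.

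Your outline is the right template and indeed matches the Weingarten route taken in \cite{mp}, but the step you label ``careful bookkeeping'' is the entire content of the theorem, and as written it is not an argument. You assert that after swapping row/column indices for the transposed factors, the Weingarten sum for $M(\mbf U_N,\mbf U_N^t)$ ``matches that of an auxiliary $*$-monomial $\widetilde M$ in $2K$ independent Haar unitaries''. This is not literally true: the entries of $U$ and $U^t$ are the \emph{same} random variables, so a $\overline U$-factor coming from the $U^t$-block can and does pair, via $\sigma$ and $\tau$, with a $U$-factor coming from the $U$-block. Those ``mixed'' pairings have no counterpart in a computation with genuinely independent copies. What has to be shown is that every such mixed pairing contributes at a strictly subleading order in $N$; this is exactly the combinatorial lemma that drives the Mingo--Popa proof, and it does not reduce to a relabelling. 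Without that lemma your argument is circular: saying the sums ``match'' those of independent unitaries is equivalent to asserting asymptotic freeness of $U$ and $U^t$.

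Your variance plan is sound in principle and aligns with what the paper's Remark invokes (the $O(N^{-2})$ bound from second-order freeness). But again, carrying it out requires the same control on mixed pairings, now in a doubled Weingarten sum.
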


\begin{remark}
\cite{mp} states in more generality the result for unitarily invariant random matrices in the sense of expectation, together with the second order asymptotic freeness
(see Corollary 20 and Proposition 38 therein). This implies that the variance of the $*$-distribution is of order $N^{-2}$, and thus almost sure convergence.
\end{remark}

It is known that irreducible representations of $\mcal U(N)$ are in a one to one correspondance
with the {\it signatures} associated with their characters, i.e., sequences
$\lambda_1\ge\ldots \ge \lambda_N$ of integers. 
If $\lambda_N\ge 0$, then the associated representation is {\it polynomial}, otherwise it is {\it rational}. 

For the purpose of asymptotics, it is convenient to characterize the irreducible representation by a 
pair of Young tableaux (known as the signatures, e.g., \cite{MR0473098}). That is, given a sequence
$\lambda_1\ge\ldots \ge \lambda_N$ of integers, if we let $l\in [N]$ be the largest index such that
$\lambda_l\ge 0$, then
the data
\[(\lambda,\mu):=\big((\lambda_1,\ldots,\lambda_l),(\mu_1,\ldots,\mu_{N-l})\big)\]
with $\lambda_1\ge\cdots \ge\lambda_l\ge 0$ and
\[\mu_1=-\lambda_N\,\geq\,\mu_2=-\lambda_{N-1}\,\ge\,\cdots 
\ge \mu_{N-l}=-\lambda_{l+1}> 0\]
characterizes the rational irreducible representation. 
Calling $l(\lambda )$ (resp. $l(\mu)$) the {\it length}, i.e., the number of non-zero elements of the sequence of
integers $(\lambda_1,\ldots,\lambda_l)$ (resp. $(\mu_1,\ldots,\mu_{N-l})$), 
we have that $l(\lambda )+l(\mu )\le N$. In other words, in order to pass from the highest weights in the
Cartan-Weyl theory to the representation with signatures $(\lambda, \mu )$, one has to pad 
``zero'' highest weights in the middle of the sequence in a unique way to ensure completion into a non-increasing sequence of $N$ integers,
as follows:
\[\lambda_1,\ldots,\lambda_l,\mu_1,\ldots,\mu_{N-l}=\lambda_1,\ldots,\lambda_{l(\lambda)},\underbrace{0,\ldots,0}_{N-l(\lambda)-l(\mu)},\mu_1,\ldots,\mu_{l(\mu)}.\]
Conversely, a pair of tableaux
$(\lambda, \mu )$ characterizes a rational irreducible representation of the $N$-dimensional unitary group as soon as
$l(\lambda )+l(\mu )\le N$; indeed, for fixed choices of $(\lambda, \mu )$, we are interested in the behaviour
of the sequence of irreducible representations of $\mcal U(N)$ associated to $(\lambda, \mu )$ when $N\to\infty$.

\begin{proof}[Proof of Theorem \ref{Th:Applications} Part 1]
Given a non-trivial $^*$-monomial $M$, it
follows from Theorem \ref{Th:MP} that $\tr_N\big(M(\mbf U_N,\overline{\mbf U_N})\big)\to0$ in expectation as $N\to\infty$.
In addition, according to \cite[Lemma 3.5]{MR3523546},
\[\chi_{\lambda,\mu}(U)=\big(\tr_N (U)\big)^{l(\lambda)}\big(\tr_N(\overline U)\big)^{l(\mu)}\big(1+O(N^{-1})\big),
\qquad U\in\mcal U_N\]
where the error term $O(N^{-1})$ is uniform in $U\in\mcal U_N$.
This concludes the proof of asymptotic freeness with
respect to $\chi_{\lambda,\mu}$.
\end{proof}

\subsection{Asymptotic freeness with amalgamation}

We now conclude this section
by proving the statement in Theorem \ref{Th:Applications} regarding asymptotic freeness of $\mbf U_N^{\otimes d}$ with amalgamation
over $\mcal S_d$.
Let $K\in\mbb N$. Given $2K$ unitary matrices
$U_1,\ldots , U_{2K}\in\mathbb M_N(\mathbb{C})^{\otimes d}$,
we consider the representation of the group
$\mathbb F_{2K}\times \mcal S_d$, where each generator $u_i$ of the free group $\mbb F_{2K}$ is sent to $U_i^{\otimes d }$,
and $\sigma \in \mcal S_d$ acts by permutation of legs of the tensor
(we use $\rho^{(N)}$ to denote the function that maps each permutation $\sigma$
to the associated matrix $\rho^{(N)}(\sigma)$ that permutes the legs of the tensor). 

\begin{theorem}\label{thm matrix model}
The map $w_1\to U_1,\ldots ,w_K\to U_K, w_{K+1}\to U_1^t, \ldots , w_{2K}\to U_{2K}^t$ extends to a random representation of
the free group on $2K$ generators $\mathbb F_{2K}$ in $(\mathbb{C}^N)^{\otimes d}$. Likewise, the map 
$\sigma \to \rho^{(N)}(\sigma)$ yields a representation of $\mcal S_d$ in $(\mathbb{C}^N)^{\otimes d}$ and these
two representations commute, therefore we have a random representation of the group 
$\mathbb F_{2K}\times \mcal S_d$.
This random representation 
converges pointwise to the character associated to the left regular representation of $\mathbb F_{2K}\times \mcal S_d$
as $N\to\infty$ (i.e. $1$ for the neutral element, and zero for all others).\end{theorem}

\begin{proof}
It is enough to prove that for a non trivial word $\mathfrak w\in \mathbb F_{2K}\times \mcal S_d$, $\tr_{N}^{\otimes d} \rho^{(N)}(\mathfrak w)\to 0$.
If the $\sigma$ component is not the identity, the character is bounded above by a product of normalized traces of unitaries times $N^{-|\sigma|}$
therefore it goes to zero.
If $\sigma$ component is the identity, then the value of the character is $\tr_N( \cdot )^d$, where $\cdot$ is obtained from the random representation
of $\mathbb F_{2K}$. In this case, asymptotic freeness is known, and this quantity converges either to zero or one, depending on whether
the word is trivial or not. The power $d$ of this quantity converges to the same limit and this concludes the proof. 
\end{proof} 

\begin{remark}
Note that we may as well say that this is a matrix model for the group $\mathbb F_{2K}\times \mcal S_d$, or 
microstates (we refer to to the book \cite{msbook} for a comprehensive introduction to microstates;
see also \cite{AGZ}).
\end{remark}

From Theorem \ref{thm matrix model}, we then obtain the desired result
as corollary:

\begin{proof}[Proof of Theorem \ref{Th:Applications} Part 2]
The asymptotic freeness of $\mbf U_N^{\otimes d}$ with amalgamation
over $\mcal S_d$
follows directly from Theorem \ref{thm matrix model}, modulo the following two facts:
\begin{enumerate}
\item $\tr_N^{\otimes d}$ restricted to $\mcal S_d$ converges to the regular trace, as a consequence of Theorem
\ref{thm matrix model} (see also \cite{collins-imrn}).

\item The free product of $\mathbb{Z}\times \mcal S_d$, $2K$-times, amalgamated over $\mcal S_d$ under the canonical identification, 
is isomorphic to $\mathbb F_{2K}\times \mcal S_d$.
\end{enumerate}
(As a remark, we point out that this result yields another proof of the asymptotic freeness
with respect to arbitrary characters, which we have proved in Section \ref{sec application RT}.)
\end{proof}

\section{Discussion}\label{Sec:Discussion}

\subsection{Strong Asymptotic Freeness}

Given that absorption properties regarding asymptotic $^*$-freeness of tensor products
hold with rather general assumptions, it is natural to wonder if a similar phenomenon occurs
with strong asymptotic freeness. Unfortunately, the following counterexample shows that strong asymptotic $^*$-freeness is not
as easily preserved by tensor products.

\begin{example}
Let $U_1^{(N)} \etc U_L^{(N)}$ be independent $N$ by $N$ Haar unitary random matrices,
where $L\ge 2$. 
We claim that $U_1^{(N)}\otimes\overline{U_1^{(N)}}\etc U_L^{(N)}\otimes\overline{U_L^{(N)}}$ are not strongly asymptotically $*$-free.
To see this,
let $u_1\etc u_L$ be the limits in $*$-distribution of the $U_i^{(N)}$,
and let $v_1\etc v_L$ be the limits of the $\overline{U_i^{(N)}}$.
If strong asymptotic $*$-freeness holds, then
$$\left\|U_1^{(N)}\otimes\overline{U_1^{(N)}}+\cdots+U_L^{(N)}\otimes\overline{U_L^{(N)}}\right\|\to\|u_1\otimes v_1+\cdots+u_L\otimes v_L\|.$$
According to Fell's absorption principle (in particular, Proposition \ref{prop:Fell}),
the fact that the $u_i$ are $*$-free Haar unitary variables implies that 
$$\|u_1\otimes v_1+\cdots+u_L\otimes v_L\|=\|u_1+\cdots+u_L\|=2\sqrt{L-1}.$$
For each $N$,
let $e_1,\ldots,e_N$ denote the canonical basis of $\C^N$,
and let us define
$\xi_N=(e_1\otimes e_1+\cdots+e_N\otimes e_N).$
It is easy to see that for any unitary matrix $U$,
$(U\otimes\overline{U})\xi_N=\xi_N$.
Therefore,
$$\left\|U_1^{(N)}\otimes\overline{U_1^{(N)}}+\cdots+U_L^{(N)}\otimes\overline{U_L^{(N)}}\right\|\geq L,$$
which is a contradiction 
as soon as $L>2$.
For the case $L=2$ we cannot derive a contradiction immediately, however we can get one
along the same lines by exhibiting three or more free elements in the free group generated by two elements,
and reason along the same lines as the argument above.
\end{example}

\subsection{Renormalizations}
In the framework of traffic spaces \cite{malebook}, one considers families of random matrices $\mbf A_N=(A_j)_{j\in J}$ such that
	\begin{align}\label{eq:maletaufunction}
	\tau_{\mbf A_N}(\mbf j,T):=N^{-c(T)}\esp\big[ \Tr_{N,T}(  A_{j_1} \otimes \cdots \otimes A_{j_n} ) \big]
	\end{align}
is of order 1 for large $N$, where $c(T)$ is the number of connected components of $T$; see for instance Proposition \ref{Prop:LimHaar}.
This is very different from the renormalization which naturally arises in this article, namely by defining 
	\begin{align}\label{eq:malezetafunction}
	\zeta_{\mbf A_N}(\mbf j,T):=N^{-\mathfrak L(T)/2}\esp\big[ \Tr_{N,T}(  A_{j_1} \otimes \cdots \otimes A_{j_n} ) \big],
	\end{align}
where we recall that $\mathfrak L(T)$ is the number of leaves of the tree of two-edge connected components of $T$ (Definition \ref{Def:Leaves}).
It is interesting to note that our main result leads to an analogue of the asymptotic traffic independence in this regime:

\begin{proposition}\label{Prop:SemiDiv}
Let $\mbf A_N$ and $\mbf B_N$ be two independent $\mcal S_N$-invariant families of random elements of tensor matrix spaces, as in Definition \ref{Def:MSbound},
and let $\zeta_{\mbf A_N}$ and $\zeta_{\mbf B_N}$ be defined as in \eqref{eq:malezetafunction}. If $\mbf A_N$ and $\mbf B_N$ satisfy the Mingo-Speicher bound, then so do the joint family $\mbf A_N \cup \mbf B_N$. If $\zeta_{\mbf A_N}$ and $\zeta_{\mbf B_N}$ converges pointwise as $N\to\infty$, then so does $\zeta_{\mbf A_N \cup \mbf B_N}$.
\end{proposition}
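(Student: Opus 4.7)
My plan is to reduce both claims to the machinery developed in Sections \ref{Sec:SplitLem}--\ref{Sec:Eta}. For a generic tensor $A \otimes B$ whose factors come from $\mbf A_N$ and $\mbf B_N$ respectively, I first expand $\Tr_{N,T} = \sum_{T' \geq T} \Tr^0_{N,T'}$ over the lattice of quotients of $T$, then apply the splitting lemma (Lemma \ref{lem:splitting}, extracted in full generality from \cite[Lemma 2.21]{malebook}, using that both families are independent and $\mcal S_N$-invariant) to factorize each injective trace as
\begin{equation*}
\esp[\Tr^0_{N,T'}(A \otimes B)] = \frac{(N-|V'|)!}{N!}\,\esp[\Tr^0_{N,T_1}(A)]\,\esp[\Tr^0_{N,T_2}(B)],
\end{equation*}
with $T_1,T_2$ as in Definition \ref{def:graph splitting}. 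The inequality $\eta(T') \leq 0$ established in Section \ref{Sec:Eta}, combined with $\mathfrak L(T') \leq \mathfrak L(T)$ (Remark \ref{Rem:quotient graph inequality}), then does the rest of the work.

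For the Mingo-Speicher bound on the joint family, I first transfer the bound from $\Tr$ to $\Tr^0$ for each individual family. This is immediate from Möbius inversion together with Remark \ref{Rem:quotient graph inequality}: $\esp[\Tr^0_{N,T_i}] = O(N^{\mathfrak L(T_i)/2})$ for $i=1,2$. Substituting into the splitting identity, using $(N-|V'|)!/N! = N^{-|V'|}(1+o(1))$ and the definition \eqref{Eq:Eta} of $\eta(T')$, yields $\esp[\Tr^0_{N,T'}(A\otimes B)] = O(N^{\mathfrak L(T')/2 + \eta(T')}) = O(N^{\mathfrak L(T')/2})$. Summing over the finite collection of quotients $T' \geq T$ and invoking $\mathfrak L(T') \leq \mathfrak L(T)$ produces the Mingo-Speicher bound for $\mbf A_N \cup \mbf B_N$.

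For the pointwise convergence of $\zeta_{\mbf A_N \cup \mbf B_N}$, the same splitting gives
\begin{equation*}
\zeta_{\mbf A_N \cup \mbf B_N}(\mbf j, T) = \sum_{T' \geq T} N^{(\mathfrak L(T') - \mathfrak L(T))/2 + \eta(T')}(1 + o(1))\,\zeta^0_{\mbf A_N}(T_1)\,\zeta^0_{\mbf B_N}(T_2),
\end{equation*}
where I set $\zeta^0_{\mbf A_N}(T_1) := N^{-\mathfrak L(T_1)/2}\esp[\Tr^0_{N,T_1}(A)]$ and similarly for $\mbf B_N$. The exponent is non-positive, so only quotients saturating both $\mathfrak L(T') = \mathfrak L(T)$ and $\eta(T') = 0$ contribute in the limit. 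It then suffices to show that pointwise convergence of $\zeta_{\mbf A_N}$ implies that of $\zeta^0_{\mbf A_N}$. This is a second Möbius inversion: the relation $\zeta_{\mbf A_N}(T) = \sum_{T'\geq T} N^{(\mathfrak L(T')-\mathfrak L(T))/2}\zeta^0_{\mbf A_N}(T')$ inverts to an analogous sum expressing $\zeta^0_{\mbf A_N}$ in terms of $\zeta_{\mbf A_N}$, and the uniform boundedness coming from Mingo-Speicher lets me discard the subleading $\mathfrak L(T') < \mathfrak L(T)$ contributions as $N \to \infty$.

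The argument is largely mechanical once the splitting lemma and $\eta(T')\leq 0$ are in hand; the main delicate point will be the Möbius transfer of convergence between $\zeta$ and $\zeta^0$, where I must check that no cancellation among Möbius coefficients destroys pointwise convergence. This is automatic because each $N$-weight $N^{(\mathfrak L(T')-\mathfrak L(T))/2}$ is either $1$ or $o(1)$ on a bounded sequence. A second, purely notational, point is that the splitting lemma as stated handles tensor products of matrices of a common order, whereas elements of $\mbf A_N \cup \mbf B_N$ may have different tensor orders; this is resolved by linearity of both sides of the splitting identity in each tensor factor, reducing to the setting of Lemma \ref{lem:splitting}.
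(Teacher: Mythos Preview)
Your proposal is correct and follows essentially the same approach as the paper: both rely on the M\"obius transfer between $\zeta$ and $\zeta^0$ (using Lemma~\ref{Lem:IneqK}/Remark~\ref{Rem:quotient graph inequality}), the splitting lemma, and the inequality $\eta(T')\leq 0$ from Section~\ref{Sec:Eta}. The only cosmetic difference is that the paper applies the splitting lemma directly at the level of $\zeta^0$ to obtain the single-term identity $\zeta^0_{\mbf A_N\cup\mbf B_N}(T)=\one(\eta(T)=0)\,\zeta^0_{\mbf A_N}(T_1)\zeta^0_{\mbf B_N}(T_2)+o(1)$, whereas you first expand $\zeta$ as a sum over quotients and then split each summand; your route is slightly longer but equivalent.
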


\begin{remark}
The limit $\zeta_{\mbf A_N \cup \mbf B_N}$ depends only on $\zeta_{\mbf A_N}$ and $\zeta_{\mbf B_N}$, and it differs from the so-called traffic free product of the individual distributions. 
\end{remark}

\begin{proof}[Proof of Proposition \ref{Prop:SemiDiv}]
We denote by $\zeta^0_{\mbf A_N}$ the function defined as $\zeta_{\mbf A_N}$ with $\tr^0_{N,T}$ instead of $\tr_{N,T}$.
Then $\zeta_{\mbf A_N}$ is bounded if and only if $\zeta^0_{\mbf A_N}$ is bounded since they are related by M\"obius formulas and by Lemma \ref{Lem:IneqK}. 
Let $B_1 = A_{j_1} \otimes \cdots \otimes A_{j_n}$ and $B_2 = B_{j'_1} \otimes \cdots \otimes B_{j'_{m'}}$, for some indices $j_k$'s and $j'_k$'s. From \eqref{Eq:ProofSec1}, changing only the definitions of $B_1$ and $B_2$ the computation remains valid, and we get: for any linear graph $T$,
	$$\zeta^0_{\mbf A_N \cup \mbf B_N}(\mbf j\cup\mbf j',T) = \one\big( \eta(T)=0 \big) \zeta^0_{\mbf A_N}( \mbf j,T_1) \zeta^0_{\mbf B_N}(\mbf j',T_2) +o(1),$$
where $T_1$ and $T_2$ are the subgraphs of $T$ consisting of edges associated with $\mbf A_N$ and $\mbf B_N$ respectively, and we recall that $\eta(T)$ is defined as in \eqref{Eq:Eta}.
\end{proof}

\bibliographystyle{alpha}
\bibliography{Bibliography} 
\end{document}